\date{}
\newtheorem{theorem}{Theorem}[section]
\newtheorem{pro}{Proposition}
\theoremstyle{remark}
\title{Approximating and preconditioning the stiffness matrix in the GoFD approximation of the fractional {Laplacian}}
\author{Weizhang Huang\thanks{Department of Mathematics, University of Kansas, Lawrence, Kansas, U.S.A.
({\em whuang@ku.edu}).}
\and
Jinye Shen\thanks{School of Mathematics, Southwestern University of Finance and Economics, Chengdu, Sichuan 611130, China ({\em jyshen@swufe.edu.cn}).}
}
\begin{document}
\vskip 1cm
\maketitle

\begin{abstract}
In the finite difference approximation of the fractional Laplacian the stiffness matrix is typically dense and
needs to be approximated numerically. The effect of the accuracy in approximating the stiffness matrix
on the accuracy in the whole computation is analyzed and shown to be significant. Four such approximations
are discussed. While they are shown to work well with the recently developed
grid-over finite difference method (GoFD) for the numerical solution of boundary value problems of the fractional
Laplacian, they differ in accuracy, economics to compute, performance of preconditioning,
and asymptotic decay away from the diagonal line. In addition, two preconditioners based on
sparse and circulant matrices are discussed for the iterative solution of linear systems associated
with the stiffness matrix. Numerical results in two and three dimensions are presented.
\end{abstract}

\noindent
\textbf{AMS 2020 Mathematics Subject Classification.}
65N06, 35R11

\noindent
\textbf{Key Words.}
Fractional Laplacian, finite difference approximation, stiffness matrix, preconditioning, overlay grid

\section{Introduction}

We are concerned with the finite difference (FD) solution of the boundary value problem (BVP) of the fractional Laplacian,
\begin{equation}
\label{BVP-1}
\begin{cases}
(-\Delta)^{s} u  = f, & \text{ in } \Omega
\\
u = 0, & \text{ in } \Omega^c
\end{cases}
\end{equation}
where $(-\Delta)^{s}$ is the fractional Laplacian with the fractional order $s \in (0,1)$, $\Omega$ is a bounded
domain in $\mathbb{R}^d$ ($d \ge 1$), $\Omega^c \equiv \mathbb{R}^d\setminus \Omega$ is the complement of $\Omega$,
and $f$ is a given function.
The fractional Laplacian can be expressed in the singular integral form as
\begin{equation}
\label{FL-1}
(-\Delta)^s u(\vec{x})= \frac{2^{2s}s \Gamma(s+\frac{d}{2})}{\pi^{\frac d 2}\Gamma (1-s)} \text{ p.v. } \int_{\mathbb{R}^d}
\frac{u(\vec{x})-u(\vec{y})}{|\vec{x}-\vec{y}|^{d+2s}}d \vec{y},
\end{equation}
or in terms of the Fourier transform as
\begin{equation}
\label{FL-F}
(-\Delta)^s u = \mathcal{F}^{-1}( |\vec{\xi}|^{2s} \mathcal{F}(u)),
\end{equation}
where p.v. stands for the Cauchy principal value, $\Gamma(\cdot)$ is the gamma function, and  $\mathcal{F}$ and $\mathcal{F}^{-1}$
denote the Fourier and inverse Fourier transforms, respectively.
When $\Omega$ is a simple domain such as a rectangle or a cube, BVP (\ref{BVP-1}) can be solved
using finite differences on a uniform grid (see, e.g., \cite{Hao2021,HS-2024-GoFD}).
When $\Omega$ is an arbitrary bounded domain (including a simple domain), BVP (\ref{BVP-1}) can be
solved using the recently developed grid-overlay finite difference (GoFD) method
with a simplicial mesh \cite{HS-2024-GoFD} or a point cloud \cite{SSH-2024-meshfreeGoFD}.
Generally speaking, the stiffness matrix in FD approximations is dense and needs to be approximated numerically.
The effect of the accuracy in the approximation on the accuracy in the numerical solution of BVP (\ref{BVP-1})
has not been studied in the past.
A main objective of the present work is to study this important issue for the numerical approximation of the fractional Laplacian.
It will be shown that the effect is actually
significant.  As a result, it is necessary to develop accurate and reasonably economic approximations for the stiffness matrix.
We will study four approximations. The first two are based on the fast Fourier transform (FFT) with uniform
and non-uniform sampling points. The third one is the spectral approximation of Zhou and Zhang \cite{Zhang2023}.
We will discuss a new and fast implementation of this approximation and derive the asymptotic decay rate of its entries
away from the diagonal line. The last one is a modification of the spectral approximation.
Properties of these approximations are summarized in Table~\ref{table:approximate-T-3}.
In addition, we will study two preconditioners based on sparse and circulant matrices for the iterative
solution of linear systems associated with the stiffness matrix.

For the purpose of numerical verification and demonstration, we consider an example of BVP (\ref{BVP-1})  with
the analytical exact solution (cf. \cite[Theorem 3]{Dyda2017}),
\begin{equation}
\label{main-example}
\Omega = B_1(0),\quad  f = 1,\quad u =  \frac{\Gamma(\frac{d}{2})}{2^{2 s} \Gamma(1+s) \Gamma(\frac{d}{2}+s)} (1-|\vec{x}|^2)_{+}^s ,
\end{equation}
where $B_1(0)$ is the unit ball centered at the origin. Numerical results will be given in two and three dimensions.

The fractional Laplacian is a fundamental non-local operator in the modeling of anomalous dynamics; see, for example,
\cite{Antil-2022,Huang2016,Lischke-2020} and references therein.
A number of numerical methods have been developed, including
FD methods
\cite{DuNing2019,Duo-2018,Duo-2015,Hao2021,Huang2014,Huang2016,Ying-2020,Pang-2012,Sunjing2021,Wang2012,Zhang-Jiwei-2016},
finite element methods
\cite{Acosta2017,Acosta201701,Ainsworth-2017,Ainsworth-2018,Bonito2022,Bonito2019,Faustmann2022,Tian2013},
spectral methods \cite{Song2017,LiHuiyuan2022,Zhang2023},
discontinuous Galerkin methods \cite{Du2019},
meshfree methods \cite{Burkardt-2021,Pang-2015}, neural network method \cite{Hao2023}, and
sinc-based methods \cite{Antil-2021}.
Loosely speaking, methods such as FD methods are constructed on uniform grids and have the advantage
of efficient matrix-vector multiplication via FFT but do not work for complex domains and with mesh adaptation.
On the other hand, methods such as finite element methods
can work for arbitrary bounded domains and with mesh adaptation
but suffer from slowness of stiffness matrix assembling and matrix-vector multiplication because the stiffness matrix is dense.
Few methods can do both.
A sparse approximation to the stiffness matrix of finite element methods and an efficient multigrid implementation
have been proposed by Ainsworth and Glusa \cite{Ainsworth-2017,Ainsworth-2018}.
The GoFD method of \cite{HS-2024-GoFD,SSH-2024-meshfreeGoFD} is an FD method that uses FFT for fast computation
and works for arbitrary bounded domains with unstructured meshes or point clouds.
Anisotropic nonlocal diffusion models and inhomogeneous fractional Dirichlet problem are also studied in \cite{Elia2022,Sunjing2022}.

An outline of the present paper is as follows. The GoFD method and uniform-grid FD approximation of the fractional Laplacian
are described in Sections~\ref{sec:GoFD} and \ref{SEC:AFD}, respectively.
The effect of the accuracy in the approximation of the stiffness matrix on the accuracy in the numerical solution of BVP (\ref{BVP-1})
is analyzed in Section~\ref{SEC:error-analysis-4-T} while four approximations to the stiffness matrix are discussed in
Section~\ref{SEC:approximating-T}.
Section~\ref{SEC:preconditioning} is devoted to the study of two preconditioners, a sparse preconditioner and a circulant preconditioner,
for iteratively solving linear systems associated with the stiffness matrix.
Numerical results in two and three dimensions are presented in Section~\ref{SEC:numerics}. Finally, conclusions are drawn in Section~\ref{SEC:conclusions}.

\section{The grid-overlay FD method}
\label{sec:GoFD}

To describe the GoFD method \cite{HS-2024-GoFD} for the numerical solution of BVP (\ref{BVP-1}),
we assume that an unstructured simplicial mesh $\mathcal{T}_h$ is given for $\Omega$.
Denote its vertices by $\vec{x}_1,\, ...,\, \vec{x}_{N_{vi}},\, ..., \, \vec{x}_{N_v}$, where $N_{v}$
and $N_{vi}$ are the numbers of the vertices and interior vertices,  respectively.
Here, the vertices are arranged so that the interior vertices are listed before the boundary ones.
We also assume that a $d$-dimensional cube, $\Omega_{\text{FD}} \equiv (-R_{\text{FD}},R_{\text{FD}})^d$,
has been chosen to contain $\Omega$.
For a given positive integer $N_{\text{FD}}$, let  $\mathcal{T}_{\text{FD}}$ be an uniform grid
for $\Omega_{\text{FD}}$ with $2N_{\text{FD}}+1$ nodes in each axial direction.
The spacing of $\mathcal{T}_{\text{FD}}$ is given by
\begin{equation}
\label{h-1}
h_{\text{FD}} = \frac{R_{\text{FD}}}{N_{\text{FD}}}.
\end{equation}
Denote the vertices of $\mathcal{T}_{\text{FD}}$ by $\vec{x}_k^{\text{FD}}$, $k = 1, ..., N_{v}^{\text{FD}}$.
Then, a uniform-grid FD approximation, denoted by $h_{\text{FD}}^{-2 s} A_{\text{FD}}$,
can be developed for the fractional Laplacian on $\mathcal{T}_{\text{FD}}$; cf. Section~\ref{SEC:AFD}.
It is known \cite{HS-2024-GoFD} that
$A_{\text{FD}}$ is symmetric and positive definite.
The GoFD approximation of BVP \eqref{BVP-1} is then defined as
\begin{equation}
\label{GoFD-1}
A_h \vec{u}_h = \vec{f}_h ,
\quad A_h \equiv \frac{1}{h_{\text{FD}}^{2 s}} D_h^{-1} (I_{h}^{\text{FD}})^T A_{\text{FD}} I_{h}^{\text{FD}},
\end{equation}
where $\vec{u}_h = (u_1, ..., u_{N_{vi}})^T$ is an FD approximation of $u$,
$\vec{f}_{h} = (f(\vec{x}_1), ..., f(\vec{x}_{N_{vi}}))^T$,
$I_h^{\text{FD}}$ is a transfer matrix from the mesh $\mathcal{T}_h$ to the uniform grid $\mathcal{T}_{\text{FD}}$, and
$D_h$ is the diagonal matrix formed by the column sums of $I_h^{\text{FD}}$. The invertibility of $D_h$ will be addressed
in Theorem~\ref{thm:Ah-1} below. Rewrite $A_h$ as
\[
A_h =  \frac{1}{h_{\text{FD}}^{2 s}} D_h^{-\frac{1}{2}} \cdot (I_{h}^{\text{FD}} D_h^{-\frac{1}{2}})^T A_{\text{FD}}
( I_{h}^{\text{FD}} D_h^{-\frac{1}{2}}) \cdot D_h^{\frac{1}{2}} .
\]
Thus, $A_h$ is similar to the symmetric and positive semi-definite matrix
\[
 \frac{1}{h_{\text{FD}}^{2 s}}  (I_{h}^{\text{FD}} D_h^{-\frac{1}{2}})^T A_{\text{FD}}
( I_{h}^{\text{FD}} D_h^{-\frac{1}{2}}),
\]
which can be shown to be positive definite when $I_{h}^{\text{FD}}$ is of full column rank.
Notice that  $D_h^{-1}$ is included in (\ref{GoFD-1}) to ensure that
the row sums of $D_h^{-1} (I_{h}^{\text{FD}})^T$ be equal to one. As a consequence,
$D_h^{-1} (I_{h}^{\text{FD}})^T$ represents a data transfer from $\mathcal{T}_{\text{FD}}$
to $\mathcal{T}_h$ and preserves constant functions.

We now consider a special choice of $I_h^{\text{FD}}$ as linear interpolation.
For any function $u(\vec{x})$, we can express its piecewise linear interpolant as
\begin{equation}
\label{interp-1}
I_h u (\vec{x}) = \sum_{j=1}^{N_{v}} u_j \phi_j(\vec{x}),
\end{equation}
where $u_j = u(\vec{x}_j)$ and $\phi_j$ is the Lagrange-type linear basis function associated with vertex $\vec{x}_j$.
(The basis function $\phi_j$ is considered to be zero outside the domain $\Omega$, i.e., $\phi_j |_{\Omega^c} = 0$.)
Then,
\[
I_h u (\vec{x}_k^{\text{FD}}) = \sum_{j=1}^{N_v} u_j \phi_j(\vec{x}_k^{\text{FD}}) , \quad k = 1, ..., N_v^{\text{FD}}
\]
which gives rise to
\begin{equation}
\label{IhFD-1}
(I_h^{\text{FD}})_{k,j} = \phi_j(\vec{x}_k^{\text{FD}}), \quad k = 1, ..., N_v^{\text{FD}}, \; j = 1, ..., N_v .
\end{equation}

\begin{theorem}[\cite{HS-2024-GoFD}]
\label{thm:Ah-1}
Assume that $N_{\text{FD}}$ is taken sufficiently large such that
\begin{equation}
h_{\text{FD}} \le \frac{a_h}{(d+1)\sqrt{d}},
\label{hFD-1}
\end{equation}
where $a_h$ is the minimum element height of $\mathcal{T}_h$. Then, the transfer matrix $I_h^{\text{FD}}$ associated with piecewise
linear interpolation is of full column rank. As a result, $D_h$ is invertible and the GoFD stiffness matrix given in (\ref{GoFD-1})
for the fractional Laplacian is similar to a symmetric and positive definite matrix and thus invertible.
\end{theorem}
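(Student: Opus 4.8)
The three conclusions form a chain, and I would organize the proof around the first one, full column rank of $I_h^{\text{FD}}$, from which the other two follow quickly. To establish full column rank I would examine the kernel: suppose $I_h^{\text{FD}} \vec{c} = 0$ for some $\vec{c} = (c_1,\dots,c_{N_v})^T$ and set $v = \sum_{j=1}^{N_v} c_j \phi_j$, the piecewise linear function on $\mathcal{T}_h$ with nodal values $c_j$ (recall $\phi_i(\vec{x}_j) = \delta_{ij}$, so $v(\vec{x}_j) = c_j$). By \eqref{IhFD-1} the assumption says precisely that $v$ vanishes at every FD grid node $\vec{x}_k^{\text{FD}}$. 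The goal is then to show that a piecewise linear function on $\mathcal{T}_h$ vanishing at all grid nodes must vanish identically, which forces $\vec{c} = 0$.

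The heart of the matter, and the step I expect to be the main obstacle, is a geometric lemma: under \eqref{hFD-1}, every element $K$ of $\mathcal{T}_h$ contains $d+1$ affinely independent FD grid nodes. I would prove this through the inscribed ball of $K$. The inradius $r_K$ of a simplex is related to its heights $h_0,\dots,h_d$ by $\frac{1}{r_K} = \sum_{i=0}^{d} \frac{1}{h_i} \le \frac{d+1}{a_h}$ (comparing, for each facet, the volume formulas $V = \frac{1}{d} r_K S$ and $V = \frac{1}{d} h_i S_i$), hence $r_K \ge \frac{a_h}{d+1}$. On the other hand, every cell of the uniform grid has circumradius $\frac{\sqrt d}{2} h_{\text{FD}}$, so any ball of radius $\ge \sqrt d\, h_{\text{FD}}$ contains all $2^d$ vertices of the grid cell containing its center, since each such vertex lies within $\frac{\sqrt d}{2} h_{\text{FD}} + \frac{\sqrt d}{2} h_{\text{FD}} = \sqrt d\, h_{\text{FD}}$ of the center. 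Combining these with \eqref{hFD-1}, which gives $\sqrt d\, h_{\text{FD}} \le \frac{a_h}{d+1} \le r_K$, shows that the inscribed ball of $K$, and therefore $K$ itself, contains a complete grid cell whose vertices include $d+1$ affinely independent points, as required.

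With the lemma in hand, the restriction of $v$ to each element $K$ is linear and vanishes at $d+1$ affinely independent points, hence is identically zero on $K$; ranging over all elements gives $v \equiv 0$ on $\overline{\Omega}$, so $c_j = v(\vec{x}_j) = 0$ for all $j$ and $I_h^{\text{FD}}$ has full column rank. For $D_h$, I would note that its $j$-th diagonal entry is the column sum $\sum_k \phi_j(\vec{x}_k^{\text{FD}})$; since the basis functions are nonnegative and full column rank rules out any zero column, each such sum is strictly positive, so $D_h$ is diagonal with positive entries and thus invertible (and $D_h^{1/2}$, $D_h^{-1/2}$ are well defined). Finally, for the spectral claim I would invoke the factorization displayed just before the theorem, namely $A_h = \frac{1}{h_{\text{FD}}^{2s}} D_h^{-1/2} M^T A_{\text{FD}} M\, D_h^{1/2}$ with $M = I_h^{\text{FD}} D_h^{-1/2}$, which exhibits $A_h$ as similar, via $D_h^{1/2}$, to $B := \frac{1}{h_{\text{FD}}^{2s}} M^T A_{\text{FD}} M$. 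The matrix $B$ is symmetric because $A_{\text{FD}}$ is, and for any $\vec{w} \ne 0$ we have $M \vec{w} \ne 0$ since $M$ inherits full column rank from $I_h^{\text{FD}}$; as $A_{\text{FD}}$ is positive definite, $\vec{w}^T B \vec{w} = \frac{1}{h_{\text{FD}}^{2s}} (M\vec{w})^T A_{\text{FD}} (M\vec{w}) > 0$, so $B$ is symmetric positive definite and $A_h$, being similar to $B$, is invertible.
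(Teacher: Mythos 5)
Your proof is correct. The paper itself contains no proof of Theorem~\ref{thm:Ah-1} (it is imported verbatim from \cite{HS-2024-GoFD}), but your kernel argument via the inscribed ball --- $r_K \ge a_h/(d+1) \ge \sqrt{d}\, h_{\text{FD}}$, so each element swallows a full grid cell and hence $d+1$ affinely independent FD nodes at which the piecewise linear function must vanish --- is exactly the reasoning that produces the constant $(d+1)\sqrt{d}$ in (\ref{hFD-1}) and matches the approach of the cited source, as do your column-sum and similarity arguments for the remaining two claims.
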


The sufficient condition (\ref{hFD-1}) for the invertibility of $A_h$ is
conservative in general. Numerical experiment suggests that a less restrictive condition, such as
\begin{equation}
\label{hFD-2}
h_{\text{FD}} \le a_h,
\end{equation}
can be used in practical computation.

The linear algebraic system (\ref{GoFD-1}) can be rewritten as
\begin{equation}
\label{GoFD-2}
(I_{h}^{\text{FD}})^T A_{\text{FD}} I_{h}^{\text{FD}} \vec{u}_h = h_{\text{FD}}^{2 s} D_h \vec{f}_h .
\end{equation}
While $I_h^{\text{FD}}$ is sparse, $A_{\text{FD}}$ is dense and needs to be approximated numerically
in multi-dimensions (cf. Section~\ref{SEC:AFD}).
Unfortunately, this has a consequence that the accuracy of approximating $A_{\text{FD}}$ can have
a significant impact on the accuracy in the FD solution of BVP (\ref{BVP-1}).
This impact will be analyzed in Section~\ref{SEC:error-analysis-4-T}.
Moreover, several approaches for approximating the stiffness matrix will be discussed in Section~\ref{SEC:approximating-T}.

It is worth pointing out that although the coefficient matrix of (\ref{GoFD-2}) is dense, its multiplication with vectors
can be carried out efficiently using FFT (cf. Section~\ref{SEC:AFD}). Therefore, it is practical to use an iterative method
for solving (\ref{GoFD-2}).
In our computation, we use the preconditioned conjugate gradient (PCG) method since the coefficient matrix is symmetric
and positive definite when the transfer matrix is of full column rank.
Preconditioning will be discussed in Section~\ref{SEC:preconditioning}.

\section{The uniform-grid FD approximation of the fractional Laplacian}
\label{SEC:AFD}

In this section we describe the FD approximation $A_{\text{FD}}$ of the fractional Laplacian
on a uniform grid $\mathcal{T}_{\text{FD}}$ through its Fourier transform formulation.
We also discuss the fast computation of the multiplication of $A_{\text{FD}}$ with vectors using FFT.
For notational simplicity, we restrict the discussion in two dimensions (2D) in this section.
The description of $A_{\text{FD}}$ in general $d$-dimensions is similar.

In terms of the Fourier transform  (\ref{FL-F}), the fractional Laplacian reads as
\begin{align}
\label{FL-2}
(-\Delta)^{s} u (x,y) = \frac{1}{(2\pi)^2} \int_{-\infty}^{\infty} \int_{-\infty}^{\infty}  \widehat{(-\Delta)^s u}(\xi,\eta)
e^{i x \xi+i y \eta} d \xi d \eta ,
\end{align}
where
\begin{align}
\label{FL-3}
\widehat{(-\Delta)^s u}(\xi,\eta) = (\xi^2+\eta^2)^{s} \hat{u}(\xi,\eta)
\end{align}
and $\hat{u}$ is the continuous Fourier transform of $u$.
Consider a 2D uniform grid  with spacing $h_{\text{FD}}$,
\[
(x_j, y_k) = (j h_{\text{FD}}, k h_{\text{FD}}),\quad j, k = 0, \pm 1, \pm 2, ... .
\]
Applying the discrete Fourier transform to the 5-point central FD approximation of the Laplacian, we obtain the
the discrete Fourier transform of an FD approximation of the fractional Laplacian as
\begin{align}
\label{FL-4}
\widecheck{(-\Delta_h)^s u} (\xi, \eta) & = \frac{1}{h_{\text{FD}}^{2 s}}
\left (4 \sin^{2}(\frac{\xi h_{\text{FD}}}{2}) + 4 \sin^{2}(\frac{\eta h_{\text{FD}}}{2})\right )^{s} \check{u}(\xi,\eta) ,
\end{align}
where $\check{u}(\xi,\eta)$ is the discrete Fourier transform of $u$ on the uniform grid, i.e.,
\[
\check{u}(\xi,\eta) = \sum_{m,n} u_{m,n} e^{- i x_m \xi - i y_n \eta} , \quad (\xi, \eta) \in (-\frac{\pi}{h_{\text{FD}}}, \frac{\pi}{h_{\text{FD}}})
\times (-\frac{\pi}{h_{\text{FD}}}, \frac{\pi}{h_{\text{FD}}}) .
\]
Then, the uniform-grid FD approximation of the fractional Laplacian is given by
\begin{align}
(-\Delta_h)^s u(x_j,y_k) & = \frac{h_{\text{FD}}^2}{ (2\pi)^2}
\int_{-\frac{\pi}{h_{\text{FD}}}}^{\frac{\pi}{h_{\text{FD}}}} \int_{-\frac{\pi}{h_{\text{FD}}}}^{\frac{\pi}{h_{\text{FD}}}}
\widecheck{(-\Delta_h)^s u} (\xi,\eta) e^{i x_j \xi + i y_k \eta} d \xi d\eta
\notag \\
& = \frac{1}{(2\pi)^2 h_{\text{FD}}^{2 s}} \sum_{m=-\infty}^{\infty} \sum_{n=-\infty}^{\infty} u_{m,n}
\int_{-\pi}^{\pi} \int_{-\pi}^{\pi}  \psi(\xi,\eta)  e^{i (j-m) \xi + i (k-n) \eta} d \xi d\eta ,
\notag
\end{align}
where
\begin{align}
\label{psi-1}
\psi(\xi,\eta) = \left (4 \sin^{2}\left (\frac{\xi}{2}\right ) + 4 \sin^{2}\left (\frac{\eta}{2}\right )\right )^{s} .
\end{align}
We can rewrite this into
\begin{align}
(-\Delta_h)^s u(x_j,y_k) & = \frac{1}{h_{\text{FD}}^{2 s}} \sum_{m=-\infty}^{\infty} \sum_{n=-\infty}^{\infty} T_{j-m,k-n} u_{m,n} ,
\label{FL-5}
\end{align}
where
\begin{align}
\label{T-1}
& T_{p,q} = \frac{1}{(2\pi)^2} \int_{-\pi}^{\pi} \int_{-\pi}^{\pi}  \psi(\xi,\eta) e^{i p \xi + i q \eta} d \xi d\eta,
\quad p, q = 0, \pm 1, \pm 2, ... .
\end{align}
Obviously, $T_{p,q}$'s are the Fourier coefficients of the function $\psi(\xi,\eta)$.
It is not difficult to see
\begin{equation}
\label{T-symmetry}
T_{-p,-q} = T_{p,q}, \quad T_{-p,q} = T_{p,q}, \quad T_{p,-q} = T_{p,q} .
\end{equation}
Except for one dimension, the matrix $T$ needs to be approximated numerically
(cf. Section~\ref{SEC:approximating-T}).

For a function $u$ vanishing in $\Omega^c$, (\ref{FL-5}) reduces to a finite double summation as
\begin{align}
\label{FL-8}
(-\Delta_h)^s u(x_j,y_k) = \frac{1}{h_{\text{FD}}^{2 s}}
\sum_{m=-N_{\text{FD}}}^{N_{\text{FD}}} \sum_{n=-N_{\text{FD}}}^{N_{\text{FD}}} T_{j-m, k-n} u_{m,n},\qquad -N_{\text{FD}} \le j, k \le N_{\text{FD}} .
\end{align}
From this, we obtain the FD approximation of the fractional Laplacian on the uniform mesh $\mathcal{T}_{\text{FD}}$
(after ignoring the factor $1/h_{\text{FD}}^{2 s}$) as
\begin{equation}
\label{AFD-1}
A_{\text{FD}} = \left ( \frac{}{} A_{(j,k),(m,n)} = T_{j-m, k-n} \right )_{(2N_{\text{FD}}+1)^2\times (2N_{\text{FD}}+1)^2}, \quad -N_{\text{FD}} \le j, k, m, n \le N_{\text{FD}} .
\end{equation}
This indicates that $A_{\text{FD}}$ is a block Toeplitz matrix with Toeplitz blocks. Moreover,
it is known \cite{HS-2024-GoFD} that $A_{\text{FD}}$ is symmetric and positive definite.
For easy reference but without causing confusion, hereafter both matrices $A_{\text{FD}}$ and $T$ will be referred to
as the stiffness matrix of the uniform-grid FD approximation.

The multiplication of $A_{\text{FD}}$ with vectors can be computed efficiently using FFT.
Assume that an approximation of $T$ has been obtained. We first compute the discrete Fourier transform of $T$. Notice that
the entries of $T$ involved in (\ref{FL-8}) are: $T_{m,n}$, $-2 N_{\text{FD}} \le m, n \le 2 N_{\text{FD}}$.
Thus, the discrete Fourier transform of $T$ is given by
\begin{align}
& \check{T}_{p,q} = \sum_{m=-2N_{\text{FD}}}^{2N_{\text{FD}}-1} \sum_{n=-2N_{\text{FD}}}^{2N_{\text{FD}}-1} T_{m,n}
e^{-\frac{i 2 \pi (m+2N_{\text{FD}})(p+2N_{\text{FD}})}{4N_{\text{FD}}}
-\frac{i 2 \pi (n+2N_{\text{FD}})(q+2N_{\text{FD}})}{4N_{\text{FD}}}},
\notag \\
& \qquad \qquad \qquad \qquad \qquad \qquad \qquad \qquad \qquad \qquad \qquad \qquad \qquad
- 2N_{\text{FD}} \le p, q \le 2 N_{\text{FD}} .
\label{T-3-0}
\end{align}
Here, $2N_{\text{FD}}$ has been added to the indices $m$, $n$, $p$, and $q$ so that the corresponding indices
have the range from $0$ to $4N_{\text{FD}}-1$. As a result, Matlab's function \textbf{fft.m} can be used
for computing (\ref{T-3-0}) directly.
Then, $T_{m,n}$'s can be expressed using the inverse discrete Fourier transform as
\[
T_{m,n} = \frac{1}{(4N_{\text{FD}})^2} \sum_{p=-2N_{\text{FD}}}^{2N_{\text{FD}}-1} \sum_{q=-2N_{\text{FD}}}^{2N_{\text{FD}}-1} \check{T}_{p,q} e^{\frac{i 2 \pi (m+2N_{\text{FD}})(p+2N_{\text{FD}})}{4N_{\text{FD}}}
+ \frac{i 2 \pi (n+2N_{\text{FD}})(q+2N_{\text{FD}})}{4N_{\text{FD}}}} .
\]
From this, we have
\begin{align*}
(A_{\text{FD}}\vec{u}_{\text{FD}})_{(j,k)}
& = \sum_{m = -N_{\text{FD}}}^{N_{\text{FD}}} \sum_{n=-N_{\text{FD}}}^{N_{\text{FD}}} T_{j-m,k-n} u_{m,n}
\notag \\
& = \frac{1}{(4N_{\text{FD}})^2} \sum_{p=-2N_{\text{FD}}}^{2N_{\text{FD}}-1} \sum_{q=-2N_{\text{FD}}}^{2N_{\text{FD}}-1} \check{T}_{p,q}
 \sum_{m = -N_{\text{FD}}}^{N_{\text{FD}}} \sum_{n=-N_{\text{FD}}}^{N_{\text{FD}}} u_{m,n}
 \notag \\
 & \qquad \qquad \qquad \qquad \qquad \qquad
 \cdot  e^{\frac{i 2 \pi (j-m+2N_{\text{FD}})(p+2N_{\text{FD}})}{4N_{\text{FD}}}
+ \frac{i 2 \pi (k-n+2N_{\text{FD}})(q+2N_{\text{FD}})}{4N_{\text{FD}}}} .
\end{align*}
This can be rewritten as
\begin{align}
& (A_{\text{FD}}\vec{u}_{\text{FD}})_{(j,k)}
= \frac{1}{(4N_{\text{FD}})^2} \sum_{p=-2N_{\text{FD}}}^{2N_{\text{FD}}-1} \sum_{q=-2N_{\text{FD}}}^{2N_{\text{FD}}-1} \check{T}_{p,q} \check{u}_{p,q}
(-1)^{p+2N_{\text{FD}} + q+2N_{\text{FD}}}
\notag \\
& \qquad \qquad \qquad \qquad \qquad \qquad \qquad \qquad \qquad \cdot
e^{\frac{i 2 \pi (p+2N_{\text{FD}})(j+2 N_{\text{FD}})}{4N_{\text{FD}}} + \frac{i 2 \pi (q+2N_{\text{FD}})(k+2 N_{\text{FD}})}{4N_{\text{FD}}}} ,
\label{AFD-2}
\end{align}
where
\begin{align}
\check{u}_{p,q}
& = \sum_{m = -2 N_{\text{FD}}}^{2 N_{\text{FD}}-1} \sum_{n=-2 N_{\text{FD}}}^{2 N_{\text{FD}}-1}
\tilde{u}_{m,n} e^{-\frac{i 2 \pi (m+2N_{\text{FD}})(p+2N_{\text{FD}})}{4N_{\text{FD}}}
-\frac{ i 2 \pi (n+2N_{\text{FD}})(q+2N_{\text{FD}})}{4N_{\text{FD}}}},
\label{AFD-3}
\\
\tilde{u}_{m,n} & = \begin{cases} u_{m,n}, & \text{ for } - N_{\text{FD}} \le m, n \le N_{\text{FD}} \\ 0, &\text{ otherwise} .
\end{cases}
\notag
\end{align}

Notice that that (\ref{T-3-0}), (\ref{AFD-2}), and (\ref{AFD-3}) can be computed using FFT.
Since the cost of FFT is $\mathcal{O}(N_{\text{FD}}^d \log (N_{\text{FD}}^d))$ flops (in $d$-dimensions),
the number of the flops needed to carry out the multiplication of $A_{\text{FD}}$ with a vector is
\begin{equation}
\label{AFD-cost}
\mathcal{O}(N_{\text{FD}}^d \log (N_{\text{FD}}^d)) ,
\end{equation}
which is almost linearly proportional to the number of nodes of the uniform grid $\mathcal{T}_{\text{FD}}$.
Comparatively, the direction computation of the multiplication of $A_{\text{FD}}$ with a vector without using FFT requires
$\mathcal{O}(N_{\text{FD}}^{2d})$ flops.

\section{Importance of approximating the stiffness matrix $T$ accurately}
\label{SEC:error-analysis-4-T}

The accuracy in approximating $T$ can have a significant impact on the accuracy of computing
$A_{\text{FD}}\vec{u}_{\text{FD}}$ (and then the accuracy in the numerical solution of BVP (\ref{BVP-1}))
due to the fact that $A_{\text{FD}}$ is a dense matrix.
To explain this, let us assume the error in approximating $T$ is $10^{-\delta}$, where $\delta$ is a positive integer.
Then the error in computing $A_{\text{FD}}\vec{u}_{\text{FD}}$ (cf. (\ref{FL-8}))
 is bounded (in $d$-dimensions) by
\begin{equation}
\label{AFD-bound-1}
\frac{1}{R_{\text{FD}}^{2 s}} (2N_{\text{FD}}+1)^d N_{\text{FD}}^{2 s} \times 10^{-\delta} \| u \|_{L^{\infty}(\Omega)}.
\end{equation}
For fixed $R_{\text{FD}}$, $\delta$, and $\| u \|_{L^{\infty}(\Omega)}$, this bound goes to infinity as $N_{\text{FD}} \to \infty$.
This implies that the error in computing $A_{\text{FD}}\vec{u}_{\text{FD}}$ and therefore, the error in the whole computation
of BVP (\ref{BVP-1}), will be dominated by the error in approximating $T$ at some point as the mesh is refined
and increase after that point if the mesh is further refined.

To see this more clearly, we take $R_{\text{FD}} = 1$ and $\| u \|_{L^{\infty}(\Omega)} = 1$ and assume that
the discretization error for BVP (\ref{BVP-1}) is $\mathcal{O}(1/N_{\text{FD}})$ (first-order) or $\mathcal{O}(1/N_{\text{FD}}^2)$ (second-order).
The bound in (\ref{AFD-bound-1}) is plotted as a function of $N_{\text{FD}}$ in Fig.~\ref{fig:Err_in_T_1}
against $1/N_{\text{FD}}$ and $1/N_{\text{FD}}^2$ for two levels of the accuracy in approximating $T$
($10^{-9}$ and $10^{-12}$) and in 2D and 3D.
From the figure we can see that there exists an equilibrium point $(N_{\text{FD}}^{e}, E^{e})$ where the bound (\ref{AFD-bound-1})
and the discretization error intersect. For example, in Fig.~\ref{fig:Err_in_T_1}(b) we have
$(N_{\text{FD}}^{e}, E^{e}) \approx (700, 1.5\times 10^{-3})$
and $(200, 3\times 10^{-5})$ for the first-order and second-order discretization error, respectively.
Generally speaking, for $N_{\text{FD}} \le N_{\text{FD}}^{e}$, the overall error is dominated by the discretization
error (regardless of the order) and decreases as $N_{\text{FD}}$ increases.
For $N_{\text{FD}} \ge N_{\text{FD}}^{e}$, on the other hand,  the overall error is dominated
by the error in approximating $T$ and increases with $N_{\text{FD}}$.
Thus, for a fixed level of accuracy in approximating $T$,
\textbf{the total error in the computed solution is expected to decrease for small $N_{\text{FD}}$
but then increase as $N_{\text{FD}}$ increases}.
The turning of the error behavior occurs at larger $N_{\text{FD}}$ and larger $E^e$ for the first order discretization error
than the second-order one.
Moreover, $N_{\text{FD}}^{e}$ becomes larger and $E^{e}$ becomes smaller when $T$ is approximated more accurately.
Finally, the figure shows that $N_{\text{FD}}^{e}$ is smaller for 3D than for 2D for the same level
of accuracy in approximating $T$. For example, in Fig.~\ref{fig:Err_in_T_1}(d) $N_{\text{FD}}^{e} \approx 180$ and $70$
for the first-order and second-order discretization error, respectively. These are compared
with $N_{\text{FD}}^{e} \approx 700$ and $200$ for 2D in Fig.~\ref{fig:Err_in_T_1}(b).

To further demonstrate the effect, we consider Example (\ref{main-example}) with GoFD with the FFT approximation of the stiffness matrix $T$
(cf. Section~\ref{SEC:T-FFT}). The accuracy in approximating $T$ in this approach is determined
by the number of nodes used in each axial direction, $M$. The solution error in $L^2$ norm is plotted as a function of $N_{\text{FD}}$
for a fixed $M$ in Fig.~\ref{fig:Err_GoFD_4_T} for 2D and 3D. Quasi-uniform meshes are used for $\Omega$ for the computation.
It is known numerically that the error of GoFD converges at the rate $\mathcal{O}(h^{\min(1, s+0.5)})$ for quasi-uniform meshes.
We can see that for small $N_{\text{FD}}$, the error decrease at this expected rate.
But after a certain $N_{\text{FD}}$, the error begins to increase.
The turning of the error behavior occurs at a larger $N_{\text{FD}}$ for a larger $M$ (corresponding to a  higher level
of accuracy in approximating $T$).
These results confirm the analysis made earlier in this section.

It should be pointed out that the effect of the error in approximating the stiffness matrix on the overall computational accuracy
is not unique to the FD/GoFD approximation discussed in this work. It can also happen with finite element and other FD approximations
of the fractional Laplacian where the stiffness matrix is dense and needs to be approximated numerically.

\begin{figure}[ht!]
\centering
\subfigure[$\delta = 9$ ($10^{-9}$), in 2D]{
\includegraphics[width=0.33\linewidth]{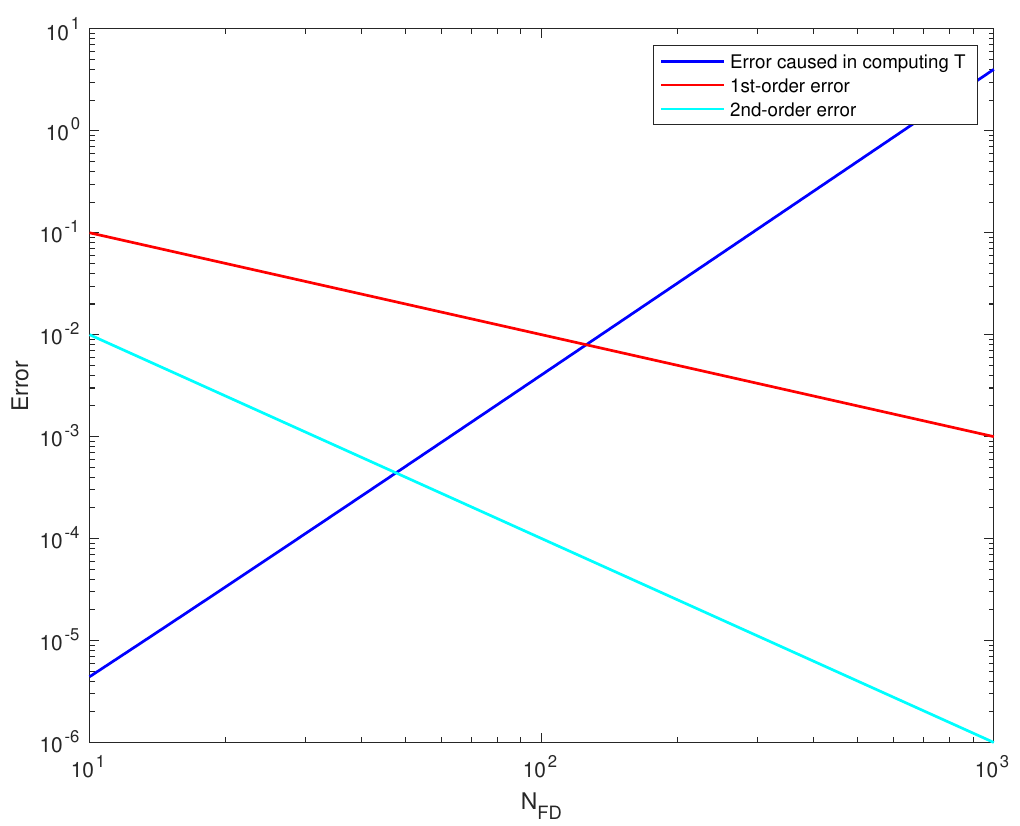}
}
\subfigure[$\delta = 12$ ($10^{-12}$), in 2D]{
\includegraphics[width=0.33\linewidth]{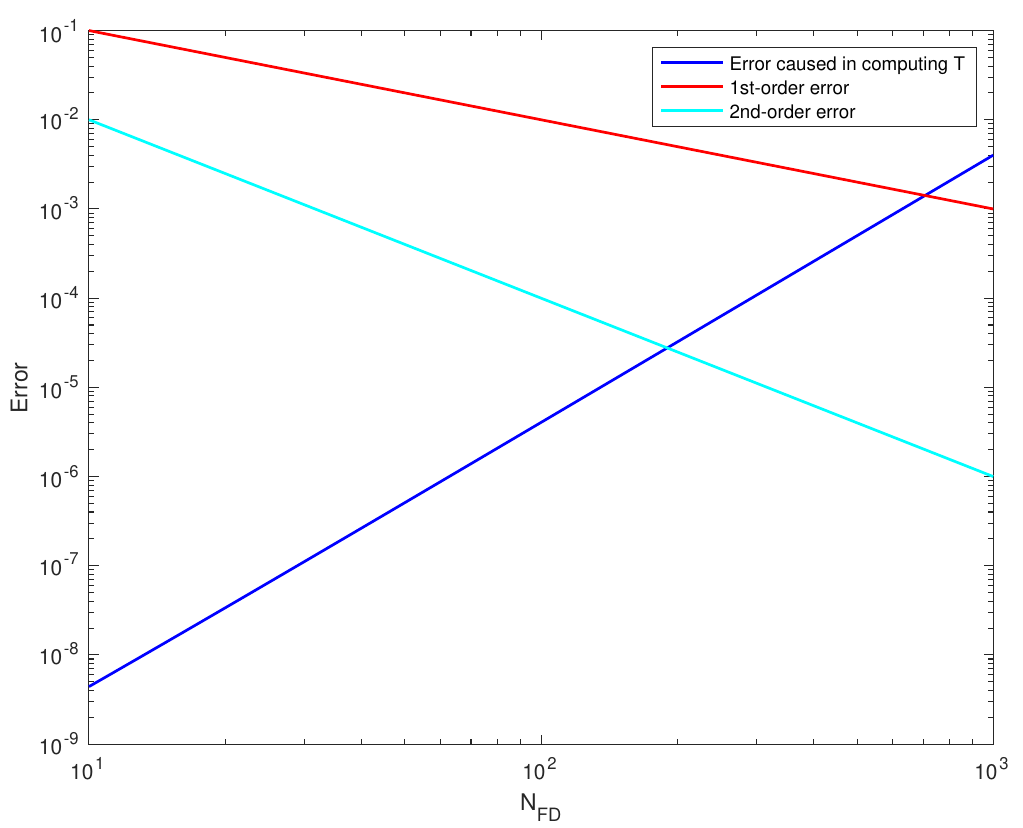}
}
\\
\subfigure[$\delta = 9$ ($10^{-9}$), in 3D]{
\includegraphics[width=0.33\linewidth]{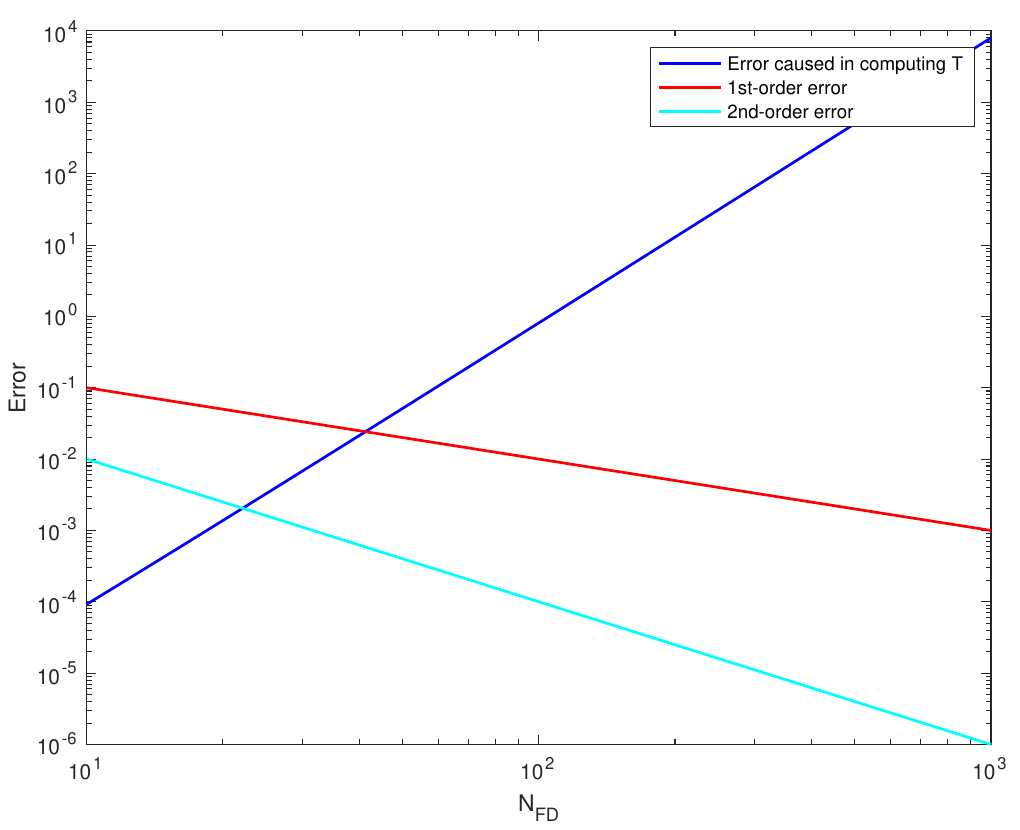}
}
\subfigure[$\delta = 12$ ($10^{-12}$), in 3D]{
\includegraphics[width=0.33\linewidth]{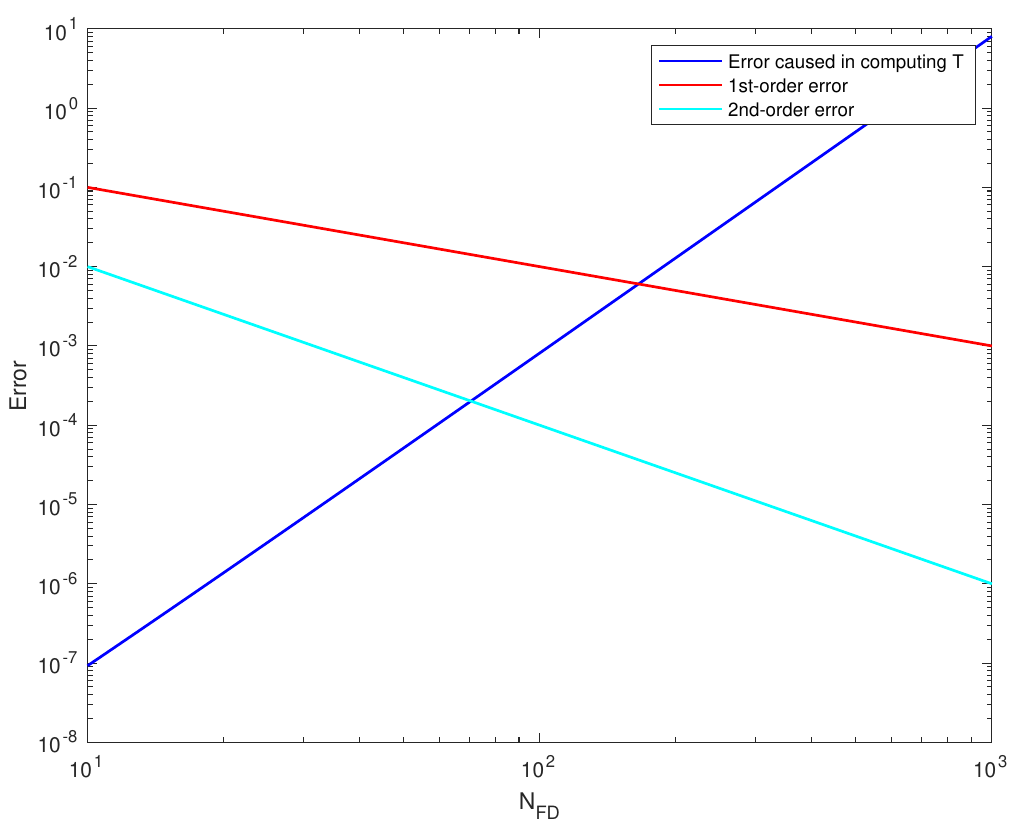}
}
\caption{The error in computing $T$ is plotted as a function of $N_{\text{FD}}$
against the discretization error.}
\label{fig:Err_in_T_1}
\end{figure}

\begin{figure}[ht!]
\centering
\subfigure[2D, $M=2^{10}$ and $2^{14}$]{
\includegraphics[width=0.33\linewidth]{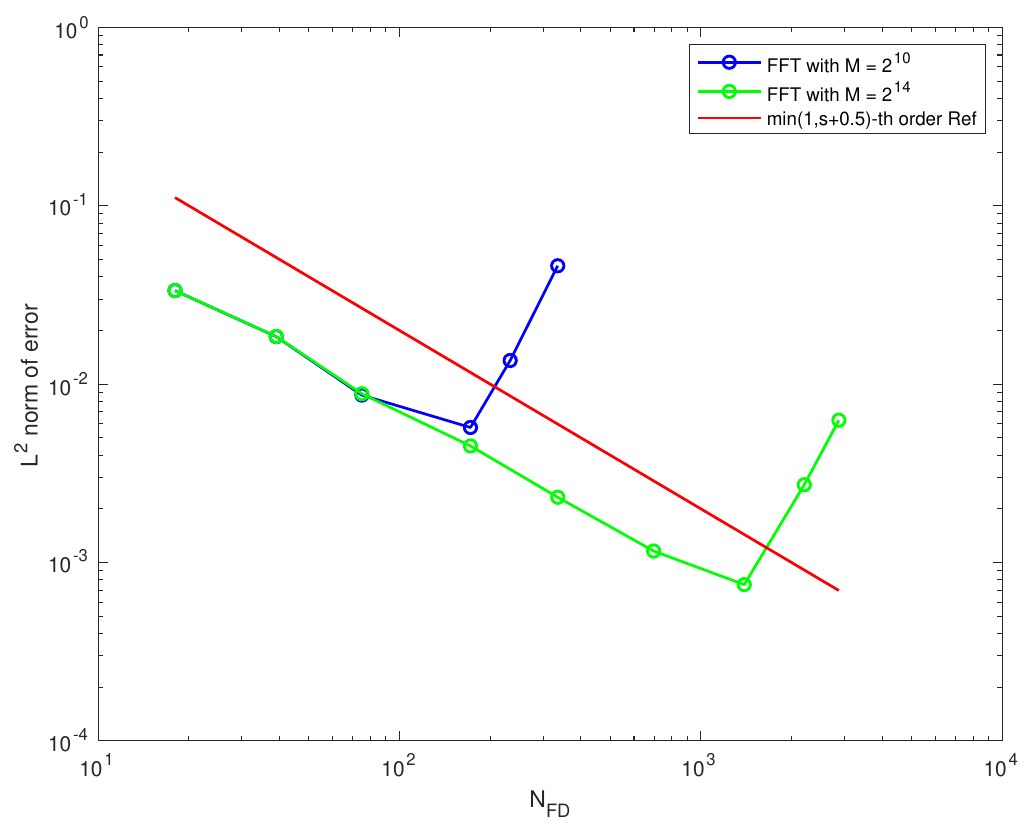}
}
\subfigure[3D, $M=2^{8}$ and $2^{10}$]{
\includegraphics[width=0.33\linewidth]{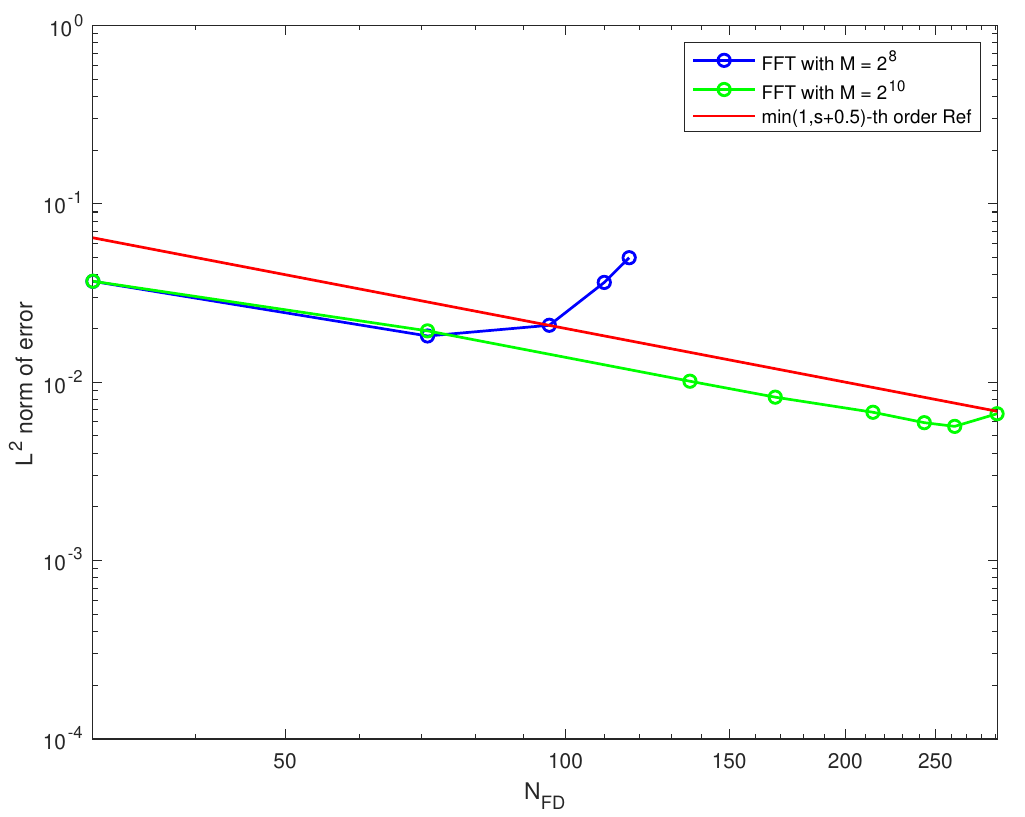}
}
\caption{Example~(\ref{main-example}) with $s = 0.5$. Convergence histories of GoFD
with the stiffness matrix $T$ being computed with FFT with various levels of accuracy (through different values of $M$).}
\label{fig:Err_GoFD_4_T}
\end{figure}

\section{Approximation of the stiffness matrix $T$}
\label{SEC:approximating-T}

In this section we describe four approaches to approximate the stiffness matrix $T$.
Recall that $T$ is given in (\ref{T-1}) in 2D. Its $d$-dimensional form is
\begin{align}
\label{T-2}
& T_{\vec{p}} = \frac{1}{(2\pi)^d} \iint_{(-\pi,\pi)^d}  \psi(\vec{\xi}) e^{i \vec{p} \cdot  \vec{\xi}}  d \vec{\xi},
\quad 0 \le \vec{p} \le 2N_{\text{FD}}
\end{align}
where
\begin{equation}
\label{psi-2}
\psi = \psi(\vec{\xi}) \equiv \left ( 4 \sum_{j=1}^d \sin^2\left (\frac{\xi_j}{2}\right ) \right )^s .
\end{equation}
In 1D, $T$ has the analytical form (see, e.g., \cite{Ortigueira2008}),
\begin{equation}
\label{T-1D}
T_p =  \frac{(-1)^p \Gamma(2 s+1)}{\Gamma(p + s + 1) \Gamma(s-p+1)}, \quad p = 0, ..., 2N_{\text{FD}} .
\end{equation}
In multi-dimensions, $T$ needs to be approximated numerically.

It is useful to recall some properties of the matrix $T$. Obviously, $T_{\vec{p}}$'s are the Fourier coefficients
of the function $\psi = \psi(\vec{\xi})$.
Moreover,  $T$ needs to be approximated only for $ 0 \le \vec{p} \le 2 N_{\text{FD}}$
(or in 2D, $ 0 \le p,q \le 2 N_{\text{FD}}$)
due to the symmetry (\ref{T-symmetry}).
Furthermore, $T_{\vec{p}}$ has the asymptotic decay
\begin{align}
\label{T-decay-0}
T_{\vec{p}} = \mathcal{O}(\frac{1}{|\vec{p}|^{d+2s}}), \quad \text{ as } |\vec{p}| \to \infty .
\end{align}
This is known analytically in 1D \cite{Ortigueira2008}. In multi-dimensions, such analytical results are not available.
Numerical verifications for $d = 1$, $2$, and $3$ can be seen in Fig.~\ref{fig:T-decay-1}.
From (\ref{AFD-1}), we can see that
the asymptotic decay of $T_{\vec{p}}$ for large $|\vec{p}|$  represents the decay of the entries of $A_{\text{FD}}$ away from the diagonal line.
Such asymptotic estimates are needed for the domain truncation and error estimation in the numerical solution of
inhomogeneous Dirichlet problems of the fractional Laplacian \cite{HS-2024-GoFD-inhom}.

\begin{figure}[ht!]
\centering
\subfigure[1D, $s=0.25$]{
\includegraphics[width=0.33\linewidth]{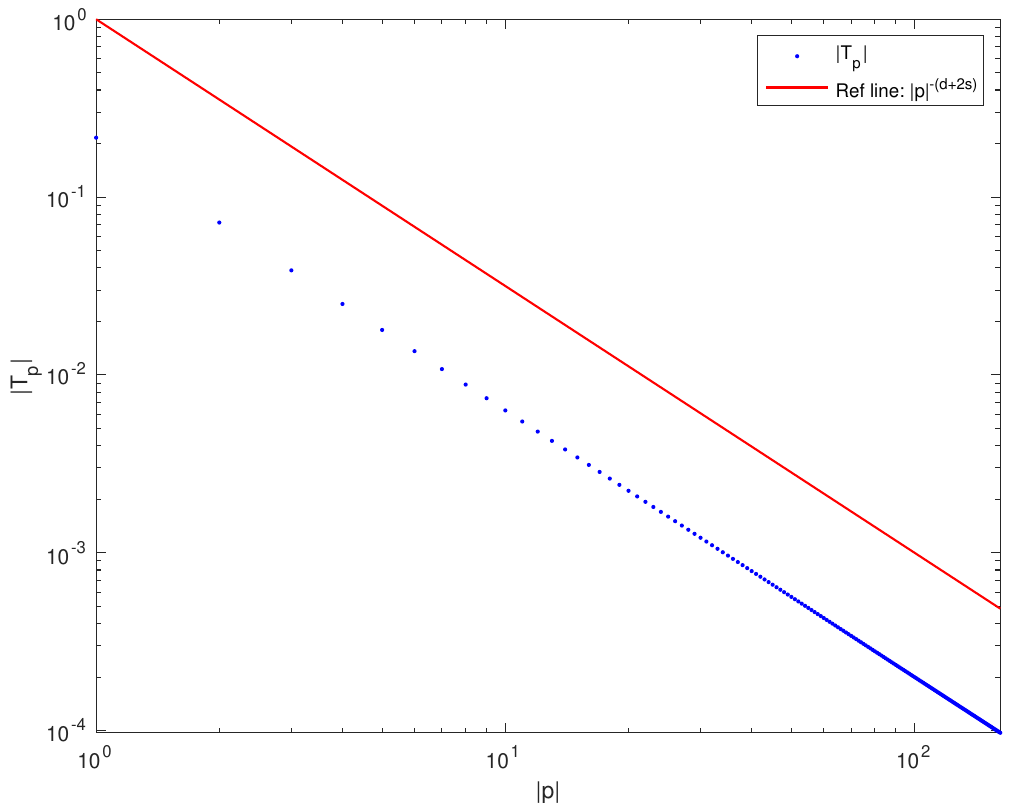}
}
\hspace{-15pt}
\subfigure[1D, $s=0.5$]{
\includegraphics[width=0.33\linewidth]{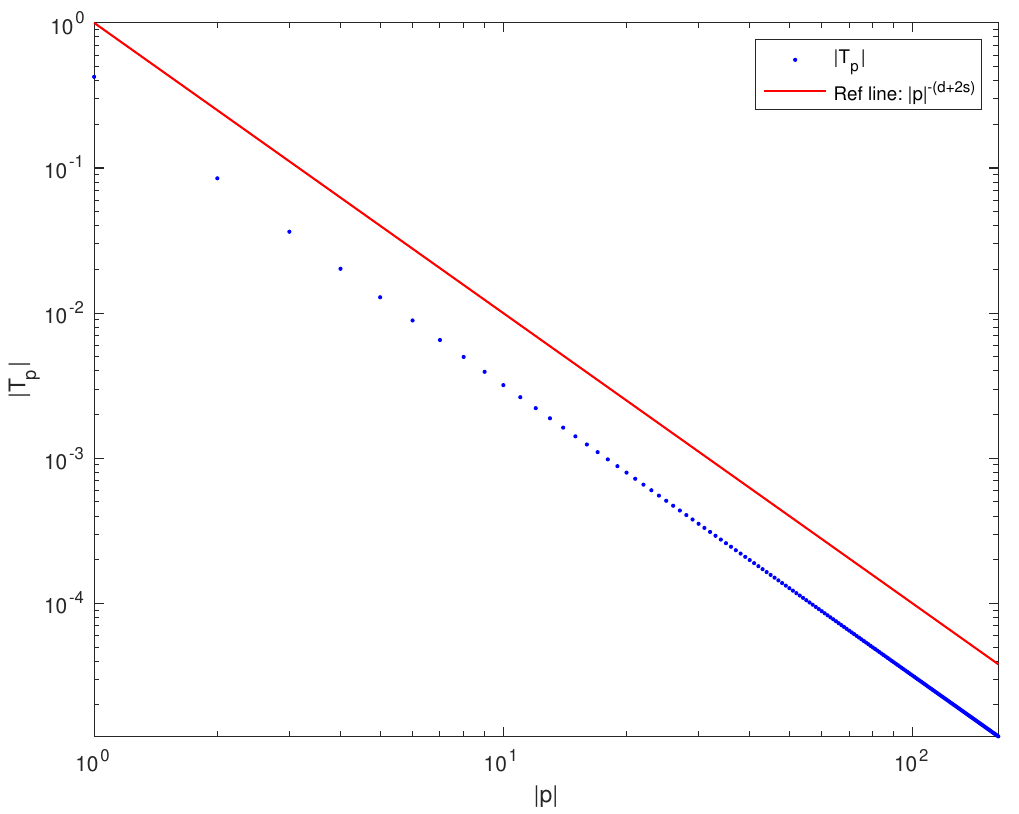}
}
\hspace{-15pt}
\subfigure[1D, $s=0.75$]{
\includegraphics[width=0.33\linewidth]{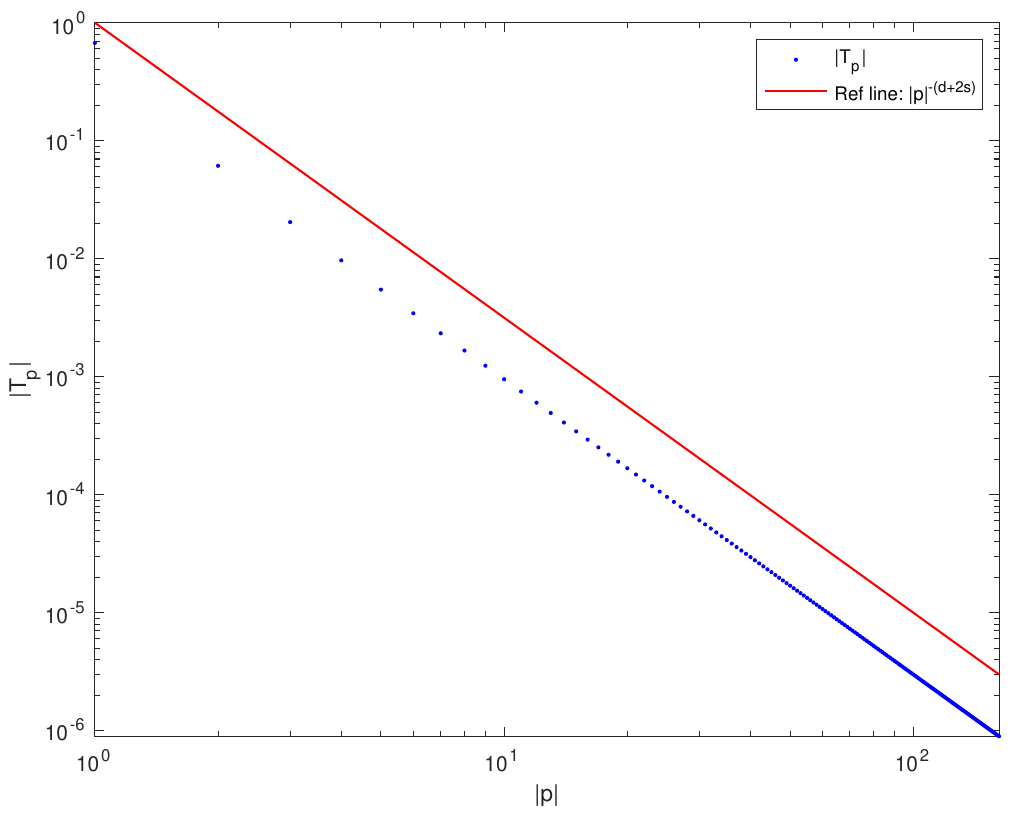}
}
\\
\subfigure[2D, $s=0.25$]{
\includegraphics[width=0.33\linewidth]{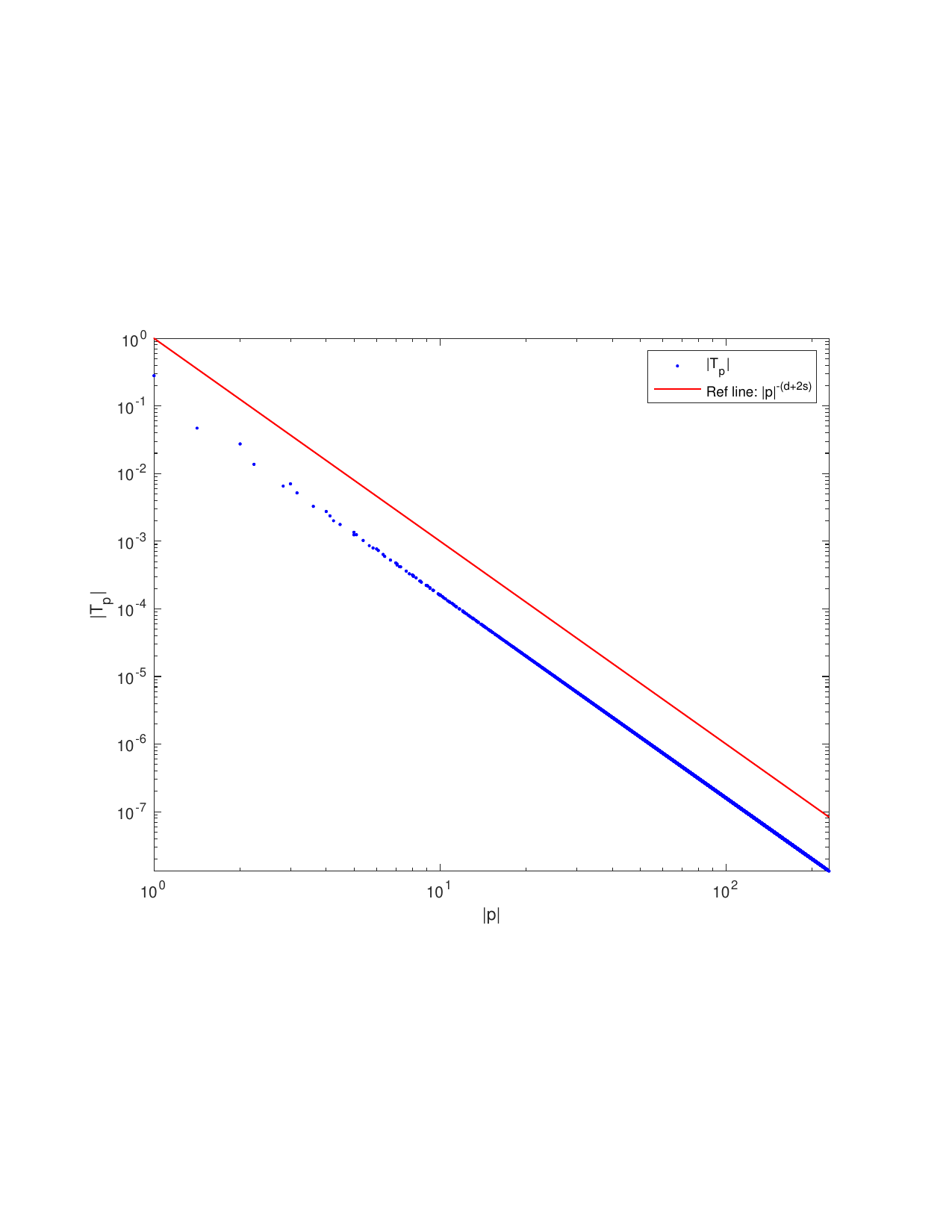}
}
\hspace{-15pt}
\subfigure[2D, $s=0.5$]{
\includegraphics[width=0.33\linewidth]{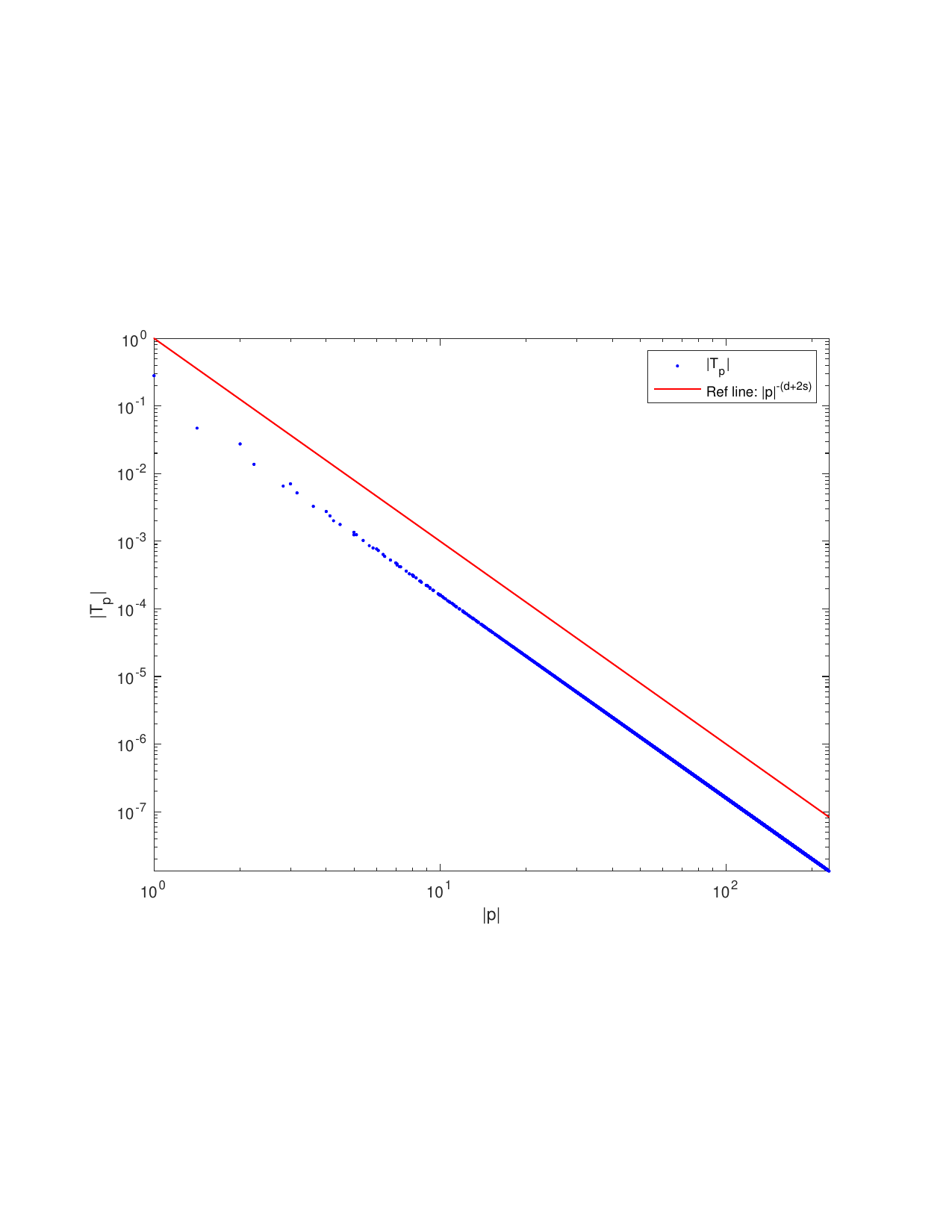}
}
\hspace{-15pt}
\subfigure[2D, $s=0.75$]{
\includegraphics[width=0.33\linewidth]{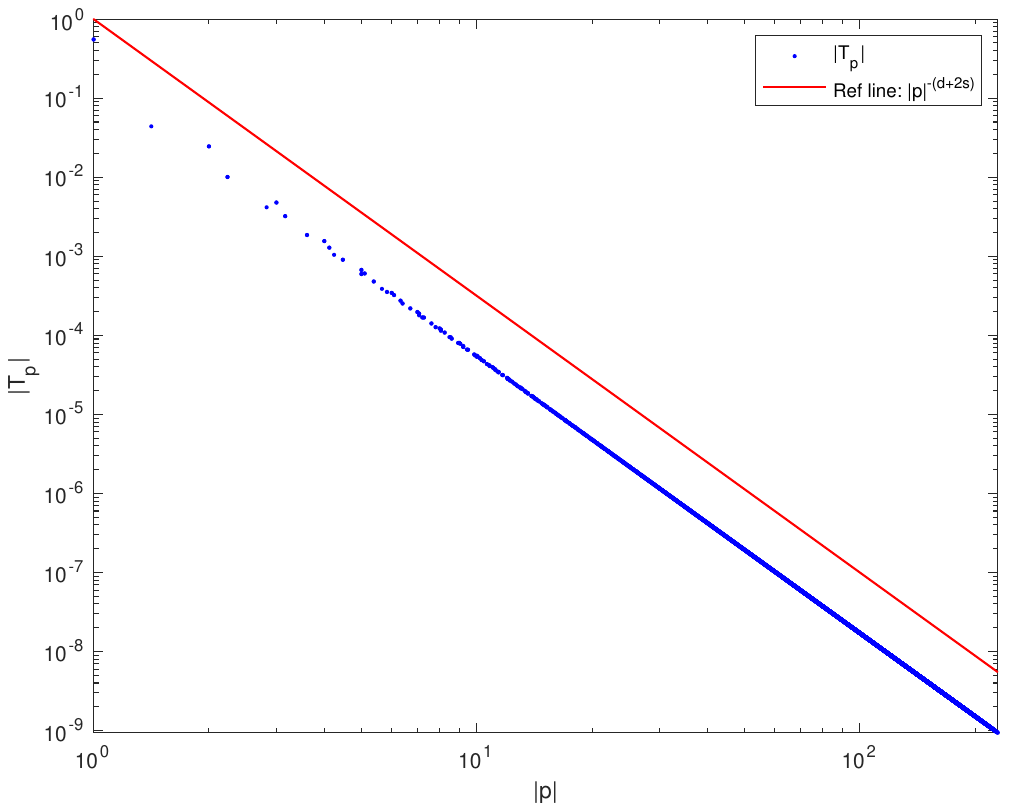}
}
\\
\subfigure[3D, $s=0.25$]{
\includegraphics[width=0.33\linewidth]{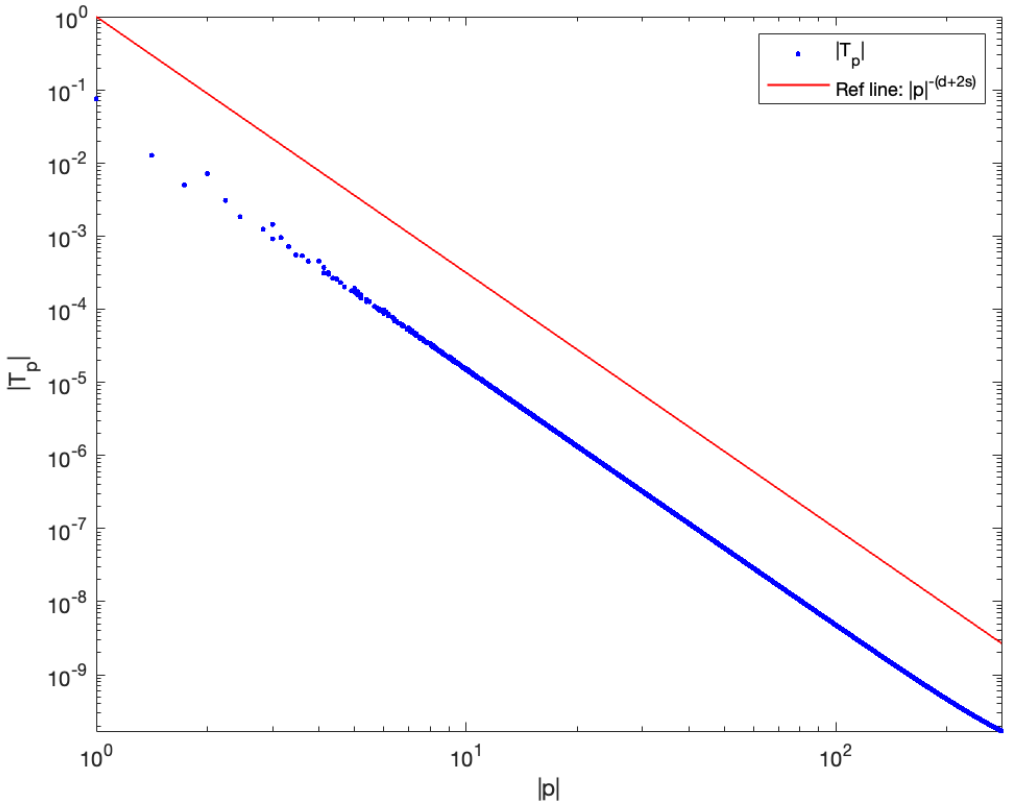}
}
\hspace{-15pt}
\subfigure[3D, $s=0.5$]{
\includegraphics[width=0.33\linewidth]{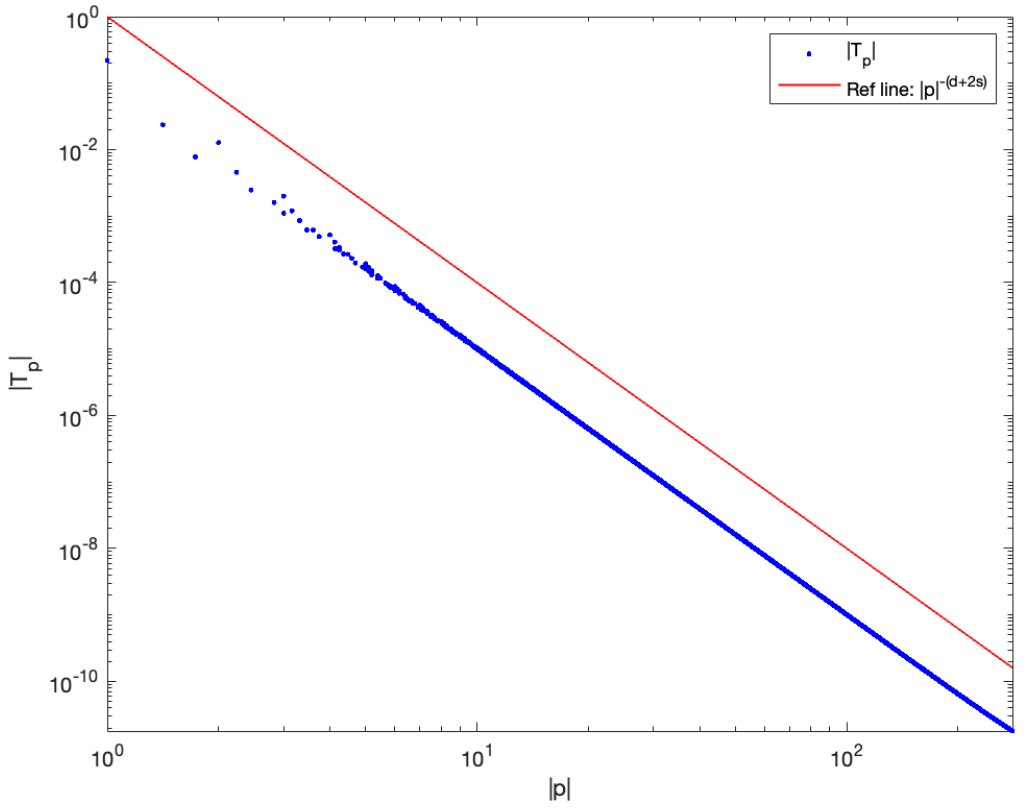}
}
\hspace{-15pt}
\subfigure[3D, $s=0.75$]{
\includegraphics[width=0.33\linewidth]{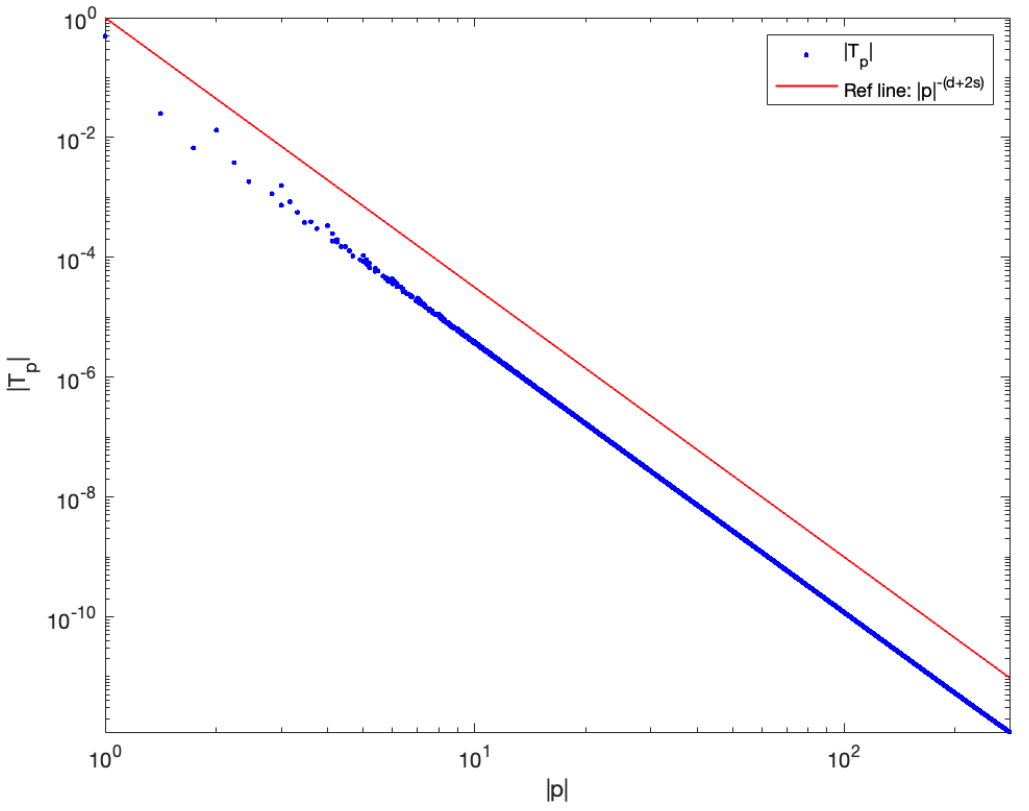}
}
\caption{The decay of $T_{\vec{p}}$ as $|\vec{p}| \to \infty$
for $s = 0.25$, 0.50, and 0.75. The reference line
is $T_{\vec{p}} = |\vec{p}|^{-(d+2 s)}$.}
\label{fig:T-decay-1}
\end{figure}

\subsection{The FFT approach}
\label{SEC:T-FFT}

We describe this approach in 2D. For any given integer $M \ge  2N_{\text{FD}}+1$, define a uniform grid
for $(-\pi,\pi) \times (-\pi,\pi)$ as
\begin{equation}
\label{uniform-grid-2}
(\xi_j,\eta_k) = \pi (\frac{2 j}{M}-1, \frac{2 k}{M}-1), \quad j, k = 0, 1, ..., M.
\end{equation}
Then, using the composite trapezoidal rule we have
\begin{align}
T_{p,q} & = \frac{1}{(2\pi)^2} \sum_{j=0}^{M-1} \sum_{k=0}^{M-1}
\int_{\xi_{j}}^{\xi_{j+1}}\int_{\eta_{k}}^{\eta_{k+1}}
 {\psi}(\xi,\eta) \cdot e^{i p \xi + i q\eta}d\xi d \eta
\notag \\
& \approx \frac{(-1)^{p+q}}{M^2} \sum_{j=0}^{M-1} \sum_{k=0}^{M-1}
  {\psi}(\xi_{j},\eta_{k}) \cdot e^{\frac{i  p j 2 \pi}{M}+\frac{iq k 2 \pi}{M}} ,
  \quad 0 \le p, q \le 2 N_{\text{FD}}
\label{T-FFT}
\end{align}
which can be computed with FFT. The number of flops required for this (in $d$-dimensions) is
\begin{equation}
\label{T-FFT-cost}
\mathcal{O}(M^d \log (M^d)) ,
\end{equation}
where $M \ge 2 N_{\text{FD}} + 1$.

To see how accurate this approach is, we apply it to the 1D case and compare the obtained approximation
with the analytical expression (\ref{T-1D}). The error is listed in Table~\ref{table:approximate-T-1}
for $M = 2^{10}$ and $2^{14}$.
It can be seen that this FFT approach performs much more accurate for large $s$ than small ones.
The difficulty with small $s$ comes from the fact that $\psi(\vec{\xi})$ has a cusp at the origin.

Filon's  \cite{Filon-1928} approach for highly oscillatory integrals and Richardson's extrapolation have
been considered for approximating $T$ in \cite{HS-2024-GoFD}. Some other numerical integral schemes,
such as composite Simpson's and Boole's rules, can also be used (to improve convergence order for smooth
integrants). Unfortunately, their convergence behavior for small $s$ is not much different from those
with the composite trapezoidal rule used here due to the low regularity of the function $\psi(\vec{\xi})$
at the origin for small $s$.

\begin{table}[htb]
\begin{center}
\caption{The error in approximating $T$ in one dimension using the FFT, non-uniform FFT, and
modified spectral approaches. The error is calculated as the maximum norm of the difference between
the approximation and analytical expression (\ref{T-1D}). $N_{\text{FD}} = 81$ is used.}
\begin{tabular}{|c|c|c|c|c|c|c|}\hline \hline
 & & \multicolumn{5}{|c|}{$s$} \\ \cline{3-7}
Approximation & $M$ & 0.1 & 0.25 & 0.5 & 0.75 & 0.9 \\ \hline
FFT & $2^{10}$ & 2.477e-04 & 3.247e-05 & 1.050e-06 & 2.609e-08 & 1.713e-09 \\  \cline{2-7}
        & $2^{14}$ & 8.812e-06 & 4.970e-07 & 3.902e-09 & 2.337e-11 & 6.516e-13\\ \hline
nuFFT & $2^{10}$ & 1.486e-06 & 1.798e-06 & 2.543e-06 & 3.597e-06 & 4.428e-06 \\ \cline{2-7}
            & $2^{14}$ & 5.735e-09 & 7.024e-09 & 9.934e-09 & 1.4055e-08 & 1.730e-08 \\ \hline
Modified Spectral & $2^{10}$ & 7.550e-07 & 2.962e-07 & 1.050e-06 & 2.792e-06 & 4.723e-06 \\ \cline{2-7}
                    & $2^{14}$ & 7.387e-08 & 2.457e-09 & 3.902e-09 & 1.037e-08 & 1.755e-08 \\
\hline \hline
\end{tabular}
\label{table:approximate-T-1}
\end{center}
\end{table}

\begin{table}[htb]
\begin{center}
\caption{CPU times (in seconds) taken to compute the matrix $T$ in 2D and 3D for various values of $M$.
$N_{\text{FD}} = 81$ and $s = 0.5$ are used except for the case with 3D and $M = 2^8$ where $N_{\text{FD}} = 71$
is used. For the modified spectral approximation, $n_G = 64$ is used. The computation was performed on a Macbook Pro
with M1 Max and 32 GB memory.}
\begin{tabular}{|c|c|c|c|c|c|c|}\hline \hline
 & \multicolumn{3}{|c|}{2D} & \multicolumn{3}{|c|}{3D}\\ \cline{2-7}
Approximation & $M = 2^{10}$ & $M = 2^{12}$ & $M = 2^{14}$ & $M = 2^8$ & $M = 2^{9}$ & $M = 2^{10}$\\ \hline
FFT & 0.033 & 0.082 & 3.879 & 0.103 & 1.895 & 38.09  \\ \hline
nuFFT & 0.0738 & 0.503 & 8.602 & 6.369 & 49.94 & 419.24 \\ \hline
Modified Spectral & 0.144 & 0.344 & 4.519 & 1.406 & 4.241 & 67.21 \\
\hline \hline
\end{tabular}
\label{table:approximate-T-2}
\end{center}
\end{table}

\subsection{The non-uniform FFT approach}
\label{SEC:T-nuFFT}

A strategy to deal with the low regularity of $\psi(\vec{\xi})$ at the origin is to use non-uniform sampling points.
For example, we use
\begin{equation}
\label{uniform-grid-3}
(\xi_j,\eta_k) = \pi \left ( \left (\frac{2 j}{M}-1\right )^2 \text{sign}\left (\frac{2 j}{M}-1\right ) ,
\left (\frac{2 k}{M}-1\right )^2 \text{sign}\left (\frac{2 k}{M}-1\right )\right ), \quad j, k = 0, 1, ..., M
\end{equation}
which clusters at the origin.
Then, we apply the composite trapezoidal rule and obtain
\begin{align}
T_{p,q} \approx \frac{1}{(2\pi)^2} \sum_{j=0}^{M-1} \sum_{k=0}^{M-1}
 \frac{(\xi_{j+1}-\xi_{j-1}) (\eta_{k+1}-\eta_{k-1})}{4}   {\psi}(\xi_{j},\eta_{k}) \cdot e^{i p \xi_j + i q \eta_k} ,
  \quad 0 \le p, q \le 2 N_{\text{FD}}
\label{T-nuFFT}
\end{align}
which can be computed using the non-uniform FFT \cite{Barnett-2021,Barnett-2019,Dutt-Rokhlin-1993}.
In our computation, we use \textbf{finufft} developed by Barnett et al. \cite{Barnett-2021,Barnett-2019}.
Matlab's function \textbf{nufft.m} can also be used
for this purpose but is slower than \textbf{finufft}.

This approach is applied to the 1D case and the error is listed in Table~\ref{table:approximate-T-1}.
It can be seen that this approach improves the accuracy significantly for small $s$ than the FFT approach
although it is inferior to the latter for large $s$. Moreover, the accuracy of this nuFFT approximation
is almost the same for all $s$ in (0, 1).

The CPU time is reported in Table~\ref{table:approximate-T-2}. It can be seen that, unfortunately,
the non-uniform FFT approximation is much more expensive than the FFT approximation.
This is especially true for large $M$ or for 3D.

\subsection{A spectral approximation}
\label{SEC:T-spectral}

This approach is the spectral approximation of
Zhou and Zhang \cite{Zhang2023} for the fractional Laplacian.
It can be interpreted as replacing the integrand and the domain in (\ref{T-2}) by spherically symmetric ones, i.e.,
\[
\psi(\vec{\xi}) \equiv \left ( 4 \sum_{j=1}^d \sin^2\left (\frac{\xi_j}{2}\right ) \right )^s \Longrightarrow |\vec{\xi}|^{2 s},\qquad
(-\pi,\pi)^d \Longrightarrow B_R(0),
\]
where $|\vec{\xi}|^{2 s}$ has the same behavior as $\psi(\vec{\xi})$ near the origin and
$R$ is chosen such that the ball $B_R(0)$ has the same volume as the cube $(-\pi,\pi)^d$, i.e.,
\begin{align}
\label{R-1}
R = 2 \sqrt{\pi} \left ( \Gamma\left (\frac{d}{2}+1\right ) \right )^{\frac{1}{d}} .
\end{align}
Then, we obtain
\begin{align}
\label{T-3}
\tilde{T}_{\vec{p}}  =  \frac{1}{(2\pi)^d} \iint_{B_R(0)}  |\vec{\xi}|^{2s} e^{i \vec{p}\cdot \vec{\xi}} d \vec{\xi},
\end{align}
The difference between (\ref{T-2}) and (\ref{T-3}) is
\begin{equation}
\label{T-3-1}
T_{\vec{p}}  - \tilde{T}_{\vec{p}}  = \frac{1}{(2\pi)^d}  \iint_{(-\pi,\pi)^d} \left ( \psi(\vec{\xi})-|\vec{\xi}|^{2 s} \right )
e^{i \vec{p} \cdot  \vec{\xi}}  d \vec{\xi}
+ \frac{1}{(2\pi)^d} \left (\iint_{(-\pi,\pi)^d\setminus B_R(0)} - \iint_{B_R(0)\setminus (-\pi,\pi)^d} \right )
|\vec{\xi}|^{2 s} e^{i \vec{p} \cdot  \vec{\xi}}  d \vec{\xi}.
\notag
\end{equation}
The first and second terms on the right-hand side correspond to the differences caused by replacing the integrand and
integration domain, respectively.
Using the formula of the Fourier transform of radial functions, we can rewrite (\ref{T-3}) into
\begin{align}
\label{T-4}
\tilde{T}_{\vec{p}}  & =  \frac{1}{(2\pi)^{\frac{d}{2}} |\vec{p}|^{\frac{d}{2}-1}} \int_0^R r^{2s + \frac{d}{2}} J_{\frac{d}{2}-1}(|\vec{p}| r) d r,
\end{align}
where $J_{\frac{d}{2}-1}(\cdot)$ is a Bessel function. From Prudnikov et al. \cite[2.12.4 (3)]{Prudnikov-1988} (with $\beta = 1$), we can rewrite this into
\begin{align}
\label{T-5}
\tilde{T}_{\vec{p}} = \frac{R^{d + 2s}}{(2 \pi )^{\frac{d}{2}} 2^{\frac{d}{2}} (s+\frac{d}{2}) \Gamma(\frac{d}{2})}
\mbox{}_1F_2(2s+\frac{d}{2}; 2s+\frac{d}{2}+1,\frac{d}{2}; - \frac{R^2 |\vec{p}|^2}{4}),
\end{align}
where $\mbox{}_1F_2(\cdot)$ is a hypergeometric function \cite{Andrews-1999}.
Hypergeometric functions are supported in many packages, including Matlab (\textbf{hypergeom.m}).
However, it can be extremely slow to compute $\tilde{T}_{\vec{p}}$ through (\ref{T-5}) using hypergeometric functions
especially when $N_{\text{FD}}$ is large.

Here, we propose a new and fast way to compute $\tilde{T}_{\vec{p}}$. When $\vec{p} = \vec{0}$, from (\ref{T-3}) we have
\begin{align}
\tilde{T}_{\vec{0}} & =  \frac{1}{(2\pi)^d} \iint_{B_R(0)}  |\vec{\xi}|^{2s} d \vec{\xi}
 = \frac{1}{(2\pi)^d} \int_0^R \frac{d \pi^{\frac{d}{2}} r^{d-1+2s}}{\Gamma(\frac{d}{2}+1)}  d r
= \frac{2 R^{d+2s}}{(d+2s) 2^d \pi^{\frac{d}{2}} \Gamma(\frac{d}{2})} .
\label{T-6}
\end{align}

When $\vec{p} \neq \vec{0}$, by changing the integral variable in (\ref{T-4}) we get
\begin{align}
\label{T-7}
\tilde{T}_{\vec{p}} = \frac{1}{(2 \pi)^{\frac{d}{2}} |\vec{p}|^{2s+d}} \int_0^{R |\vec{p}|} r^{2s+\frac{d}{2}} J_{\frac{d}{2}-1}(r) d r.
\end{align}
Recall that we need to compute $\tilde{T}_{\vec{p}}$ for $\vec{p} = p$ in 1D for $0\le p \le N_{\text{FD}}$,
$\vec{p} = (p,q)$ for $0 \le p, q \le N_{\text{FD}}$ in 2D, and $\vec{p} = (p,q,r)$ for $0 \le p, q, r \le N_{\text{FD}}$ in 3D.
Let $n = (N_{\text{FD}}+1)^d$. These points are sorted (and renamed if necessary) as
\[
|\vec{p}_0| \le |\vec{p}_1| \le \cdots \le |\vec{p}_{n}| .
\]
In actual computation, we can remove repeated points to save time.
Then, we apply a Gaussian quadrature (of $n_G$ points) to the integrals
\[
\int_{R|\vec{p}_{j-1}|}^{R|\vec{p}_{j}|} r^{2s+\frac{d}{2}} J_{\frac{d}{2}-1}(r) d r, \quad j = 1, ..., n.
\]
Finally, we obtain
\begin{align}
\label{T-8}
\tilde{T}_{\vec{p}_k} = \frac{1}{(2 \pi)^{\frac{d}{2}} |\vec{p}_k|^{2s+d}} \sum_{j=1}^{k}
\int_{R|\vec{p}_{j-1}|}^{R|\vec{p}_{j}|} r^{2s+\frac{d}{2}} J_{\frac{d}{2}-1}(r) d r, \quad k = 1, ..., n.
\end{align}
The total number of flops required to compute $\tilde{T}$ is
\begin{equation}
\label{T-spectral-cost}
\mathcal{O}(n_G\, (N_{\text{FD}}+1)^d),
\end{equation}
which is linearly proportional to the total number of nodes of the uniform grid $\mathcal{T}_{\text{FD}}$. This cost
is smaller than or comparable with that of the FFT approach, (\ref{T-FFT-cost}).
Since the cost of computing this spectral approximation is independent of $M$, we do not list the CPU time for this approach
in Table~\ref{table:approximate-T-2}. Nevertheless, we can get an idea from the CPU time for the modified spectral approach
(cf. Section~\ref{SEC:T-m-spectral}) which employs the current approach for computing one of its integrals.

It has been shown in \cite{Zhang2023} that the stiffness matrix (\ref{T-3}) can lead to accurate approximations to BVP (\ref{BVP-1})
(also cf. Section~\ref{SEC:numerics}).
It is worth emphasizing that (\ref{T-3}) is different from (and not close to) the stiffness matrix (\ref{T-2}). In the following, we show that
(\ref{T-3}) has asymptotic rates different  from (\ref{T-decay-0} for the stiffness matrix (\ref{T-2}).

\begin{pro}
\label{pro:T-1}
The stiffness matrix $\tilde{T}$ defined in (\ref{T-3}) has the decay
\begin{align}
\label{pro-T-1-1}
\tilde{T}_{\vec{p}} = \mathcal{O}(\frac{1}{|\vec{p}|^{\frac{d+1}{2}}}), \quad \text{ as } |\vec{p}| \to \infty .
\end{align}
\end{pro}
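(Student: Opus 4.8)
The plan is to start from the one-dimensional Bessel-integral representation~(\ref{T-7}), namely
\[
\tilde{T}_{\vec{p}} = \frac{1}{(2 \pi)^{\frac{d}{2}} |\vec{p}|^{2s+d}} \int_0^{R |\vec{p}|} r^{2s+\frac{d}{2}} J_{\frac{d}{2}-1}(r) \, d r,
\]
and extract the large-argument asymptotics of the integral as $|\vec{p}| \to \infty$. The key observation is that the prefactor already carries a factor $|\vec{p}|^{-(2s+d)}$, so the decay rate of $\tilde{T}_{\vec{p}}$ is governed entirely by how fast the integral $\int_0^{R|\vec{p}|} r^{2s+\frac{d}{2}} J_{\frac{d}{2}-1}(r)\, d r$ grows with the upper limit. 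Since $J_{\frac{d}{2}-1}(r)$ decays like $r^{-1/2}$ for large $r$ and oscillates, the integrand behaves like $r^{2s+\frac{d}{2}-\frac{1}{2}}$ times an oscillatory factor, and the integral is dominated by its behavior near the upper endpoint.

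\emph{First} I would recall the standard large-argument expansion of the Bessel function,
\[
J_{\nu}(r) = \sqrt{\frac{2}{\pi r}} \left ( \cos\left (r - \frac{\nu \pi}{2} - \frac{\pi}{4}\right ) + O\left (\frac{1}{r}\right ) \right ), \quad r \to \infty,
\]
with $\nu = \frac{d}{2}-1$. \emph{Second}, I would substitute this into the integral and treat the leading oscillatory piece by integration by parts (or equivalently a stationary-phase/endpoint analysis): integrating $r^{2s+\frac{d}{2}-\frac{1}{2}} \cos(r - \text{const})$ produces a boundary term of order $(R|\vec{p}|)^{2s+\frac{d}{2}-\frac{1}{2}}$ at the upper limit, while the lower-order corrections and the contribution from the oscillatory derivative are of strictly smaller order. \emph{Third}, I would combine this with the prefactor: the leading term of $\tilde{T}_{\vec{p}}$ is then of order
\[
|\vec{p}|^{-(2s+d)} \cdot |\vec{p}|^{2s+\frac{d}{2}-\frac{1}{2}} = |\vec{p}|^{-\frac{d+1}{2}},
\]
which is precisely~(\ref{pro-T-1-1}). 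One should also check that the limit $\int_0^{\infty} r^{2s+\frac{d}{2}} J_{\frac{d}{2}-1}(r)\, dr$, if it converges in the regularized sense, contributes a faster-decaying term; in fact the dominant balance comes from the upper endpoint $R|\vec{p}|$ rather than from any fixed lower portion.

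The main obstacle I expect is the careful justification that the endpoint contribution is genuinely the leading one and that no cancellation or hidden convergence of the full integral $\int_0^{\infty}$ destroys the claimed rate. Because $2s + \frac{d}{2} - \frac{1}{2} > -1$ for all $s \in (0,1)$ and $d \ge 1$, the integral does not converge absolutely at infinity, so the correct framework is to regard $\int_0^{R|\vec{p}|}$ as a genuinely growing quantity and to track its endpoint asymptotics via repeated integration by parts against the oscillatory kernel. The delicate point is bookkeeping the powers of $r$ through the integration by parts so as to confirm that the boundary term at $r = R|\vec{p}|$ dominates the error terms uniformly; once this is established, multiplying by the explicit prefactor gives the stated exponent $\frac{d+1}{2}$ directly, and I would finish by noting that this rate is independent of $s$, in sharp contrast to the rate $d+2s$ in~(\ref{T-decay-0}) for the true stiffness matrix $T$.
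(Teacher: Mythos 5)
Your proposal is correct and follows essentially the same route as the paper: the large-argument asymptotics of $J_{\frac{d}{2}-1}$, a single integration by parts on the oscillatory piece over $[1, R|\vec{p}|]$, and combination with the prefactor $|\vec{p}|^{-(2s+d)}$ to obtain the exponent $\frac{d+1}{2}$. The only slight inaccuracy is your claim that the correction terms are of \emph{strictly} smaller order than the boundary term; in fact the $\mathcal{O}(r^{-1})$ Bessel remainder and the post-integration-by-parts integral both contribute at the same order $\mathcal{O}(|\vec{p}|^{2s+\frac{d-1}{2}})$, which is harmless here since the proposition asserts only an upper bound.
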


\begin{proof}
It is known that the Bessel function $J_{\frac{d}{2}-1}$ can be expressed as
\[
J_{\frac{d}{2}-1}(r) = \sqrt{\frac{2}{\pi r}} \cos(r - \frac{(d-1)\pi}{4}) + \mathcal{O}(r^{-\frac{3}{2}}) .
\]
Thus, as $|\vec{p}| \to \infty$ we have
\begin{align*}
& \int_0^{R |\vec{p}|} r^{2s+\frac{d}{2}} J_{\frac{d}{2}-1}(r) d r
\\
& =  \int_0^1 r^{2s+\frac{d}{2}} J_{\frac{d}{2}-1}(r) d r
+ \sqrt{\frac{2}{\pi}} \int_1^{R |\vec{p}|} r^{2s+\frac{d-1}{2}} \left [ \cos(r - \frac{(d-1)\pi}{4}) + \mathcal{O}(r^{-1})\right ] d r
\\
& = \int_0^1 r^{2s+\frac{d}{2}} J_{\frac{d}{2}-1}(r) d r + \sqrt{\frac{2}{\pi}} \int_1^{R |\vec{p}|} r^{2s+\frac{d-1}{2}} \cos(r - \frac{(d-1)\pi}{4}) d r
+ \mathcal{O}(|\vec{p}|^{2 s + \frac{d-1}{2}})
\\
& = \int_0^1 r^{2s+\frac{d}{2}} J_{\frac{d}{2}-1}(r) d r  + \mathcal{O}(|\vec{p}|^{2 s + \frac{d-1}{2}})
+ \sqrt{\frac{2}{\pi}} r^{2s+\frac{d-1}{2}} \sin(r - \frac{(d-1)\pi}{4})\left. \frac{}{}\right |_{1}^{R |\vec{p}|}
\\
& \quad
- (2s + \frac{d-1}{2}) \sqrt{\frac{2}{\pi}} \int_1^{R |\vec{p}|} r^{2s+\frac{d-3}{2}} \sin(r - \frac{(d-1)\pi}{4}) d r
\\
& = \int_0^1 r^{2s+\frac{d}{2}} J_{\frac{d}{2}-1}(r) d r + \mathcal{O}(|\vec{p}|^{2 s + \frac{d-1}{2}})  + \mathcal{O}(|\vec{p}|^{2 s + \frac{d-1}{2}})
+ \mathcal{O}(|\vec{p}|^{2 s + \frac{d-1}{2}})
\\
& = \mathcal{O}(|\vec{p}|^{2 s + \frac{d-1}{2}}) .
\end{align*}
The asymptotic (\ref{pro-T-1-1}) follows this and (\ref{T-7}).
\end{proof}

\begin{pro}
\label{pro:T-2}
In 1D, the stiffness matrix $\tilde{T}$ defined in (\ref{T-3}) has the decay
\begin{align}
\label{pro-T-2-1}
\tilde{T}_{p} = \mathcal{O}(\frac{1}{|{p}|^{1+\min(1,2s)}}), \quad \text{ as } |{p}| \to \infty .
\end{align}
\end{pro}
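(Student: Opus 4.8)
The plan is to exploit the fact that in one dimension the radius in \eqref{R-1} specializes to $R = 2\sqrt{\pi}\,\Gamma(3/2) = \pi$, so that the upper limit of the integral in \eqref{T-7} becomes $R|p| = \pi p$, an integer multiple of $\pi$. This is precisely the feature that renders the $\mathcal{O}(|p|^{-1})$ bound of Proposition~\ref{pro:T-1} non-sharp in 1D: the leading oscillatory endpoint term there carries a factor $\sin(\pi p)$, which vanishes for integer $p$. Concretely, I would first use the exact identity $J_{-1/2}(r) = \sqrt{2/(\pi r)}\,\cos r$ (so that no Bessel asymptotics are needed when $d=1$) to reduce \eqref{T-7}, after the substitution $r = p\rho$, to the exact representation
\[
\tilde{T}_p = \frac{1}{\pi}\int_0^{\pi} \rho^{2s}\cos(p \rho)\, d \rho ,
\]
valid for $p \ge 1$ (the case $p \le 0$ following from the evenness $\tilde T_{-p}=\tilde T_p$, the 1D analogue of \eqref{T-symmetry}). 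Thus $\tilde T_p$ is, up to the factor $1/\pi$, the Fourier cosine coefficient of $\rho^{2s}$ on $[0,\pi]$, and its decay is governed by the smooth endpoint at $\rho=\pi$ and the algebraic singularity of the integrand at $\rho=0$.

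Next I would integrate by parts once. Both boundary contributions vanish, at $\rho=\pi$ because $\sin(\pi p)=0$ and at $\rho=0$ because $\rho^{2s}\to 0$ for $s>0$, giving
\[
\tilde T_p = -\frac{2s}{\pi p}\int_0^{\pi} \rho^{2s-1}\sin(p \rho)\, d \rho .
\]
It then remains to show that $\int_0^{\pi} \rho^{2s-1}\sin(p \rho)\, d \rho = \mathcal{O}(p^{-\min(1,2s)})$ as $p\to\infty$, and I would split into two cases according to whether the singularity $\rho^{2s-1}$ at the origin has bounded amplitude.

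In the case $s \ge 1/2$ the factor $\rho^{2s-1}$ is bounded at the origin, so a second integration by parts is legitimate; the boundary term at $\rho=\pi$ contributes a term of size $\pi^{2s-1}(-1)^p/p$, while the remaining integral $\int_0^\pi \rho^{2s-2}\cos(p\rho)\,d\rho$ is bounded uniformly in $p$ because $\rho^{2s-2}$ is integrable on $[0,\pi]$ (as $2s-2>-1$). Hence $\int_0^\pi \rho^{2s-1}\sin(p\rho)\,d\rho = \mathcal{O}(p^{-1})$ and $\tilde T_p = \mathcal{O}(p^{-2})$, consistent with $\min(1,2s)=1$. In the case $s<1/2$ the second integration by parts fails, since $\rho^{2s-1}$ is unbounded at $0$; this is the main obstacle, and I would circumvent it by comparison with the half-line, writing $\int_0^\pi = \int_0^\infty - \int_\pi^\infty$. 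The scaling $u=p\rho$ together with the classical value $\int_0^\infty u^{2s-1}\sin u\,du = \Gamma(2s)\sin(\pi s)$ (convergent for $0<2s<1$) gives $\int_0^\infty \rho^{2s-1}\sin(p\rho)\,d\rho = \Gamma(2s)\sin(\pi s)\,p^{-2s}$, whereas a single integration by parts on $[\pi,\infty)$, where $\rho^{2s-1}$ now decays, bounds the tail by $\mathcal{O}(p^{-1})$. Since $2s<1$, the $p^{-2s}$ term dominates, so $\int_0^\pi \rho^{2s-1}\sin(p\rho)\,d\rho=\mathcal{O}(p^{-2s})$ and $\tilde T_p=\mathcal{O}(p^{-(1+2s)})$, consistent with $\min(1,2s)=2s$. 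Combining the two cases yields \eqref{pro-T-2-1}. The only delicate point is the $s<1/2$ estimate: the endpoint integration by parts is unavailable at the origin, and one must instead isolate the exact leading singular contribution through the half-line sine transform, the endpoint at $\pi$ being harmless thanks to $R=\pi$.
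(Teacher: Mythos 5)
Your proposal is correct, and it reaches \eqref{pro-T-2-1} by a genuinely different route than the paper. The paper starts from the same exact representation $\tilde T_p=\frac{1}{\pi}\int_0^\pi\xi^{2s}\cos(p\xi)\,d\xi$, but then converts it via a Gradshteyn--Ryzhik table entry into a sum of Kummer confluent hypergeometric functions $M(1+2s,2+2s,\pm i p\pi)$ and invokes their large-argument asymptotic expansion; the crucial cancellation appears only at the very end, when the leading term of the expansion is seen to carry the factor $\sin(p\pi)=0$. You instead keep everything elementary: one integration by parts kills the leading boundary term immediately (again via $\sin(p\pi)=0$, and via $\rho^{2s}\to0$ at the origin), and the remaining oscillatory integral $\int_0^\pi\rho^{2s-1}\sin(p\rho)\,d\rho$ is estimated by a second integration by parts when $s\ge 1/2$ and by comparison with the half-line integral $\int_0^\infty u^{2s-1}\sin u\,du=\Gamma(2s)\sin(\pi s)$ (valid for $0<2s<1$) plus an $\mathcal{O}(p^{-1})$ tail when $s<1/2$; all steps check out, including the integrability conditions $2s-2>-1$ and $2s-2<-1$ in the respective cases. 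What the paper's route buys is the full asymptotic expansion \eqref{1D-T-1} with explicit coefficients in front of every term (useful if one wants the constant in the $p^{-(1+2s)}$ term), at the cost of special-function machinery; what your route buys is transparency about \emph{why} the generic $\mathcal{O}(|p|^{-1})$ rate of Proposition~\ref{pro:T-1} improves in 1D --- the endpoint $R|p|=\pi p$ lands on a zero of the sine --- at the cost of a case split at $s=1/2$. Either argument is acceptable.
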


\begin{proof}
From (\ref{T-3}), we have
\[
\tilde{T}_p = \frac{1}{2\pi} \int_{-\pi}^{\pi} |\xi|^{2 s} e^{i p \xi} d \xi = \frac{1}{\pi} \int_0^{\pi} \xi^{2 s} \cos(p \xi) d \xi
= \pi^{2 s} \int_0^1 \xi^{2 s} \cos( p \xi \pi) d \xi .
\]
From Gradshteyn and Ryzhik \cite[Page 436, 3.761 (6)]{Gradshteyn-2015}, we have
\[
\tilde{T}_p = \frac{\pi^{2s}}{2 (1+2s)} \big ( M(1+2s, 2+2s, i p \pi) + M(1+2s, 2+2s, - i p \pi) \big ),
\]
where $M(a,b,z)$ is the Kummer's confluent hypergeometric function.


As $p \to \infty$, the Kummer's function has the asymptotic
\begin{align*}
& M(1+2s, 2+2s,i p \pi)
\\
& = \frac{e^{i p \pi} (i p \pi)^{-1}}{\Gamma(1+2 s)} \left [ 1 + \frac{(-2s) (1)}{1!} (i p \pi)^{-1} + \mathcal{O}(p^{-2}) \right ]
+ \frac{e^{\pi i (1+2s)} (i p \pi)^{-(1+2s)}}{\Gamma(1)} \left [ 1 + \mathcal{O}(p^{-1})\right ]
\\
& = \frac{e^{i p \pi} (i p \pi)^{-1}}{\Gamma(1+2 s)} + \frac{ 2 s e^{i p \pi}}{(p \pi)^2 \Gamma(1+2 s)}
+ \frac{e^{\pi i (1+2s)} (i p \pi)^{-(1+2s)}}{\Gamma(1)}  + \mathcal{O}(p^{-(2+2s)}) .
\end{align*}
Noticing $i = e^{i \pi/2}$, we get
\[
M(1+2s, 2+2s,i p \pi) \sim
\frac{e^{i (p \pi-\frac{\pi}{2})} }{p \pi \Gamma(1+2 s)} + \frac{ 2 s e^{i p \pi}}{(p \pi)^2 \Gamma(1+2 s)}
+ \frac{e^{ i (1+2s) \frac{\pi}{2}}}{(p \pi)^{1+2s} } + \mathcal{O}(p^{-(2+2s)})
\]
and
\begin{align*}
& M(1+2s, 2+2s,i p \pi) + M(1+2s, 2+2s,-i p \pi)
\notag \\
& =  \frac{2 \cos(x\pi-\frac{\pi}{2}) }{p \pi \Gamma(1+2 s)} + \frac{ 4 s \cos(p \pi)}{(p \pi)^2 \Gamma(1+2 s)}
+ \frac{2 \cos ((1+2s)\pi/2) }{(p \pi)^{1+2s} }  + \mathcal{O}(p^{-(2+2s)}).
\end{align*}
Combining the above results, we get
\begin{align}
\label{1D-T-1}
\tilde{T}_p  = \frac{\pi^{2s}}{2 (1+2s)} \left [
\frac{2 \sin(p \pi) }{p \pi \Gamma(1+2 s)} + \frac{ 4 s \cos(p \pi)}{(p \pi)^2 \Gamma(1+2 s)}
+ \frac{2 \cos ((1+2s)\pi/2) }{(p \pi)^{1+2s} } \right ]  + \mathcal{O}(p^{-(2+2s)}).
\end{align}
Recall that $p$ is a positive integer and $\sin(p \pi) = 0$. Thus,
\begin{align*}
\tilde{T}_p & = \frac{\pi^{2s}}{2 (1+2s)} \left [
 \frac{ 4 s \cos(p \pi)}{(p \pi)^2 \Gamma(1+2 s)}
+ \frac{2 \cos ((1+2s)\pi/2) }{(p \pi)^{1+2s} } \right ]  + \mathcal{O}(p^{-(2+2s)})
\\
& =  \mathcal{O}(p^{-2}) +  \mathcal{O}(p^{-(1+2s)})
= \mathcal{O}(p^{-(1+\min(1,2s))}) .
\end{align*}
\end{proof}

Numerical results in Fig.~\ref{fig:T-decay-2} verify the decay rates of $\tilde{T}_{\vec{p}}$
as in Propositions~\ref{pro:T-1} and \ref{pro:T-2}.
Moreover, Fig.~\ref{fig:T-decay-2} shows different patterns of the distribution of $\tilde{T}$
than those in Fig.~\ref{fig:T-decay-1} for the stiffness matrix $T$.
Specifically, the entries of $T$ stay closely along the decay line whereas
the entries of $\tilde{T}$ spread out more below the decay line.

\begin{figure}[ht!]
\centering
\subfigure[1D, $s=0.25$]{
\includegraphics[width=0.33\linewidth]{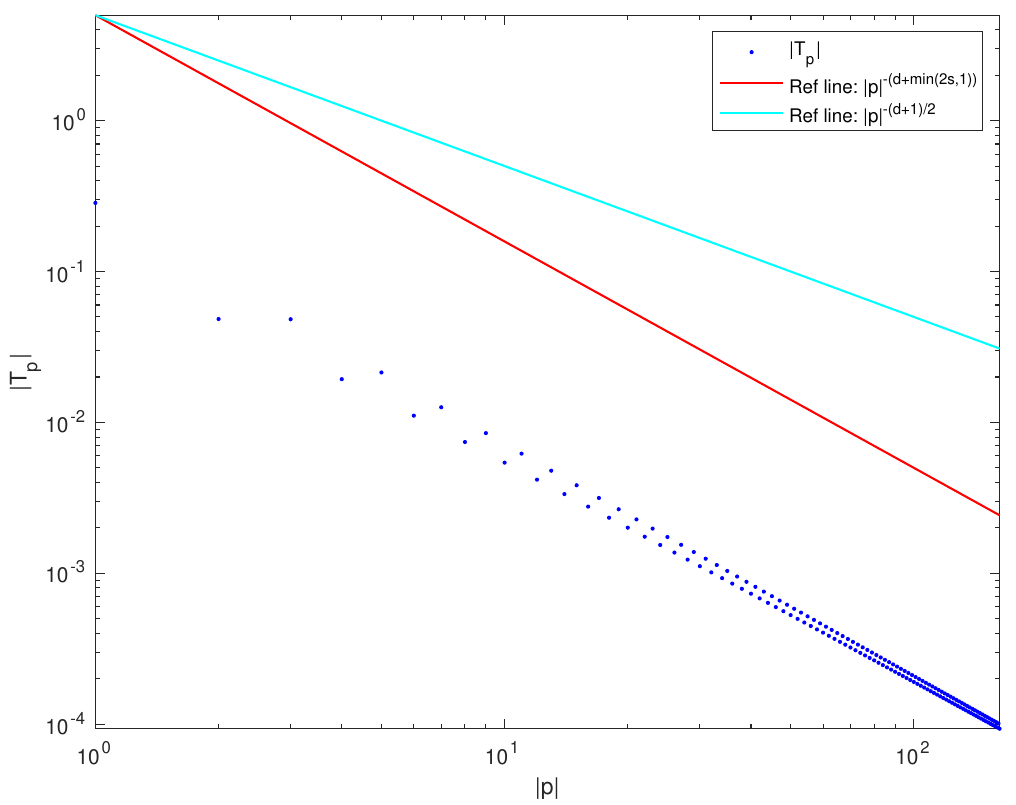}
}
\hspace{-15pt}
\subfigure[1D, $s=0.5$]{
\includegraphics[width=0.33\linewidth]{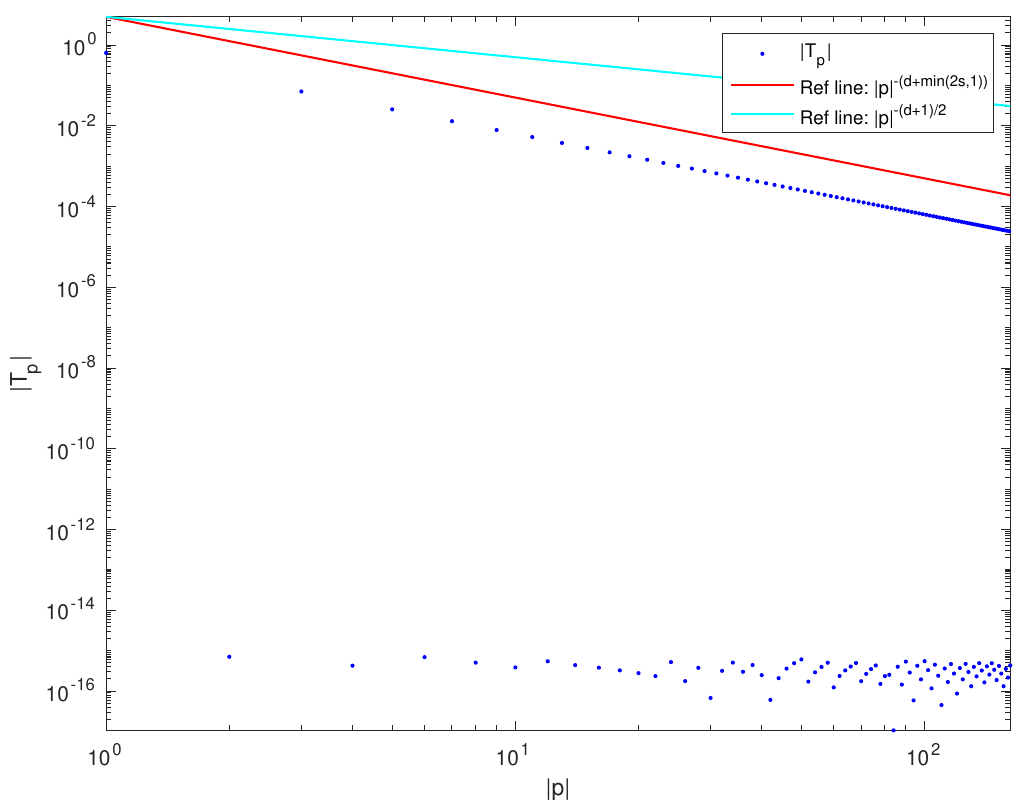}
}
\hspace{-15pt}
\subfigure[1D, $s=0.75$]{
\includegraphics[width=0.33\linewidth]{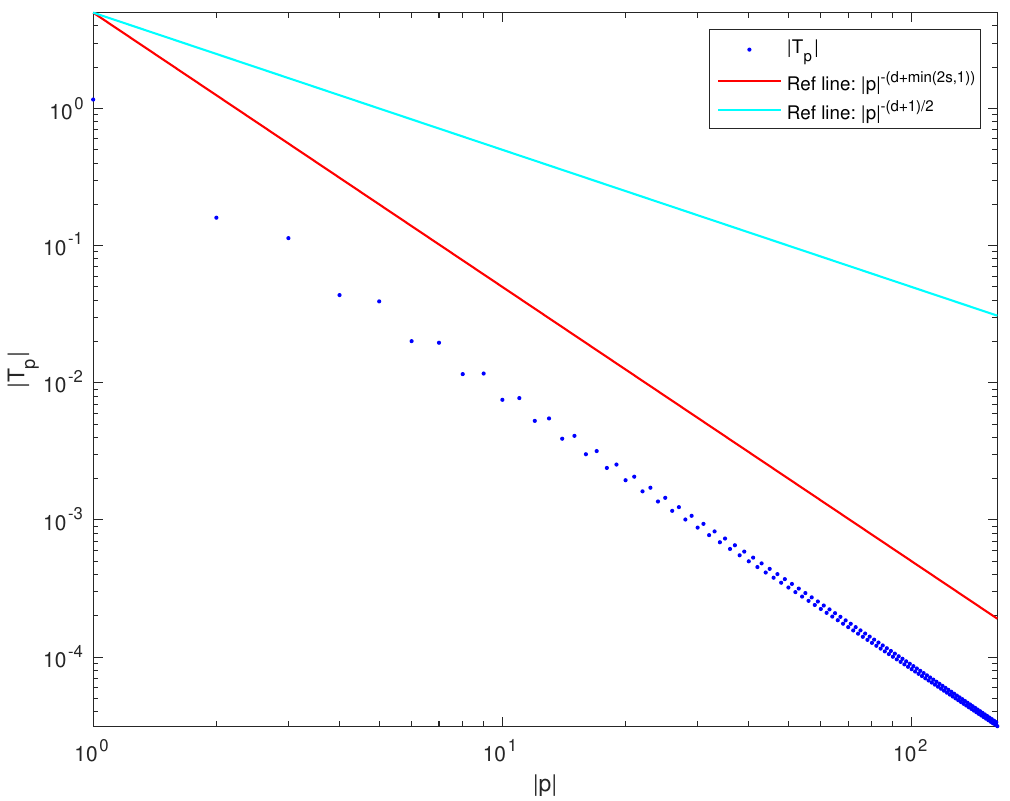}
}
\\
\subfigure[2D, $s=0.25$]{
\includegraphics[width=0.33\linewidth]{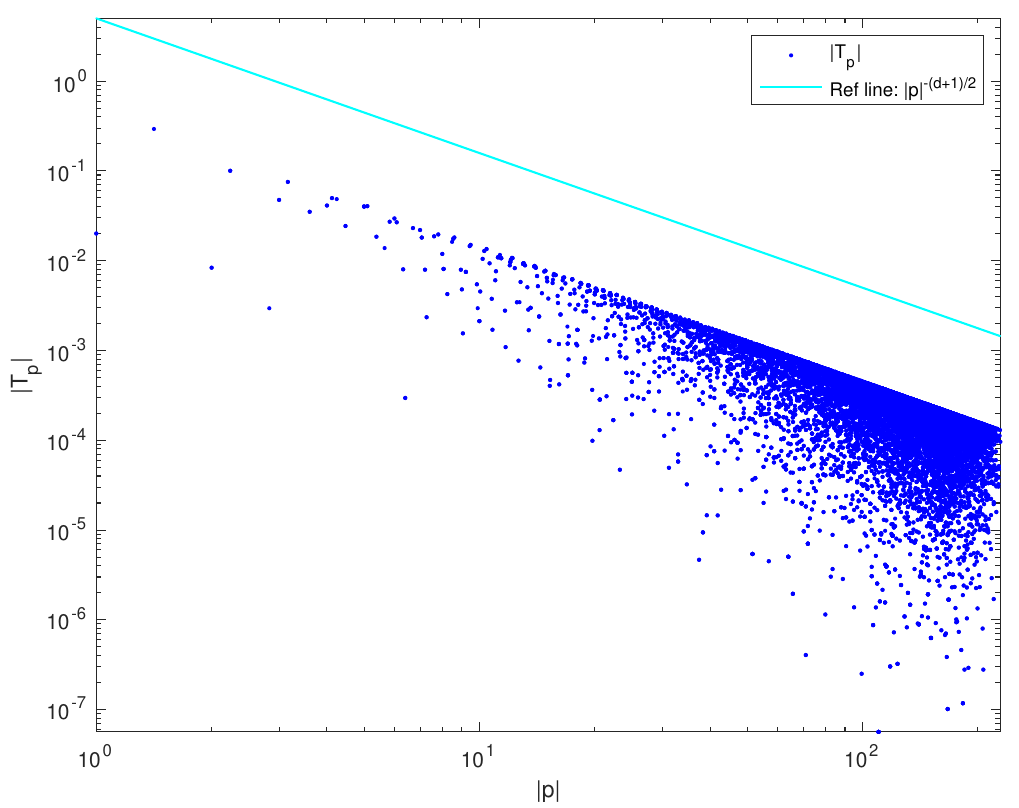}
}
\hspace{-15pt}
\subfigure[2D, $s=0.5$]{
\includegraphics[width=0.33\linewidth]{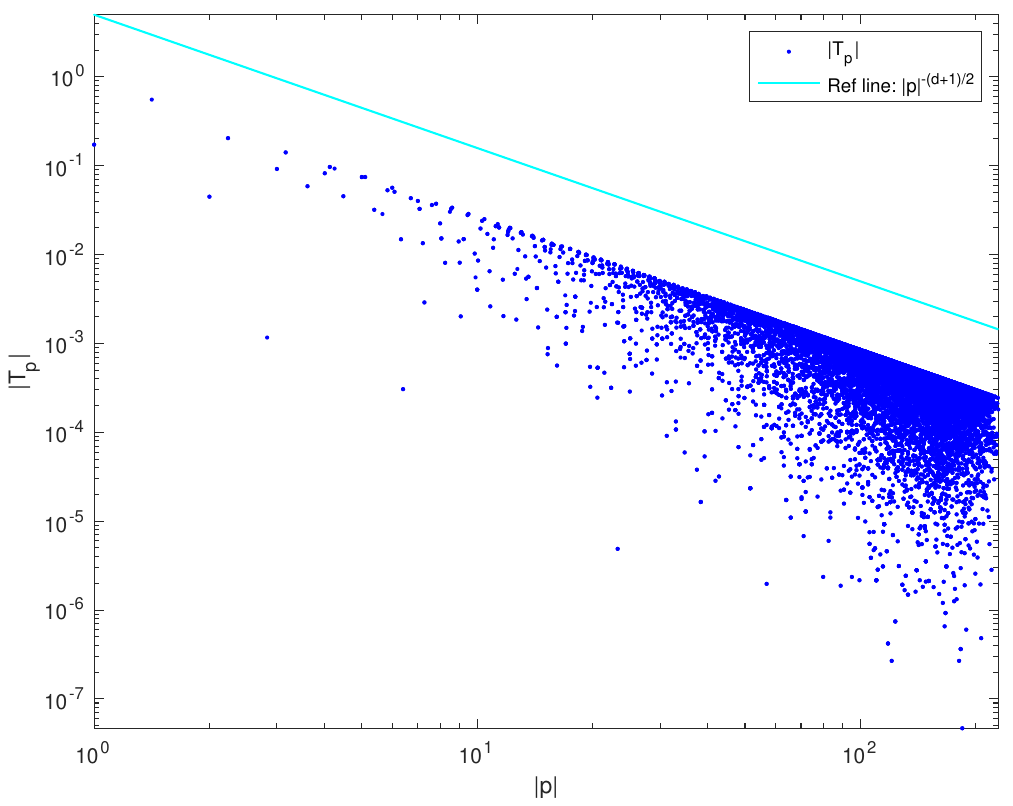}
}
\hspace{-15pt}
\subfigure[2D, $s=0.75$]{
\includegraphics[width=0.33\linewidth]{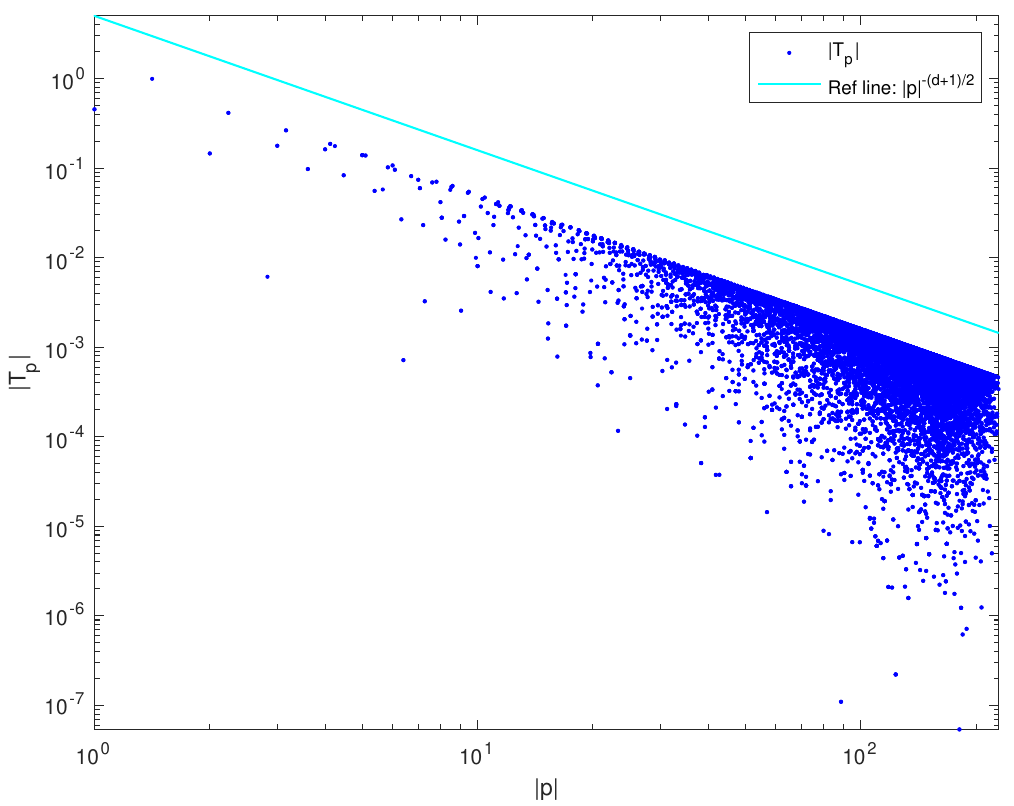}
}
\\
\subfigure[3D, $s=0.25$]{
\includegraphics[width=0.33\linewidth]{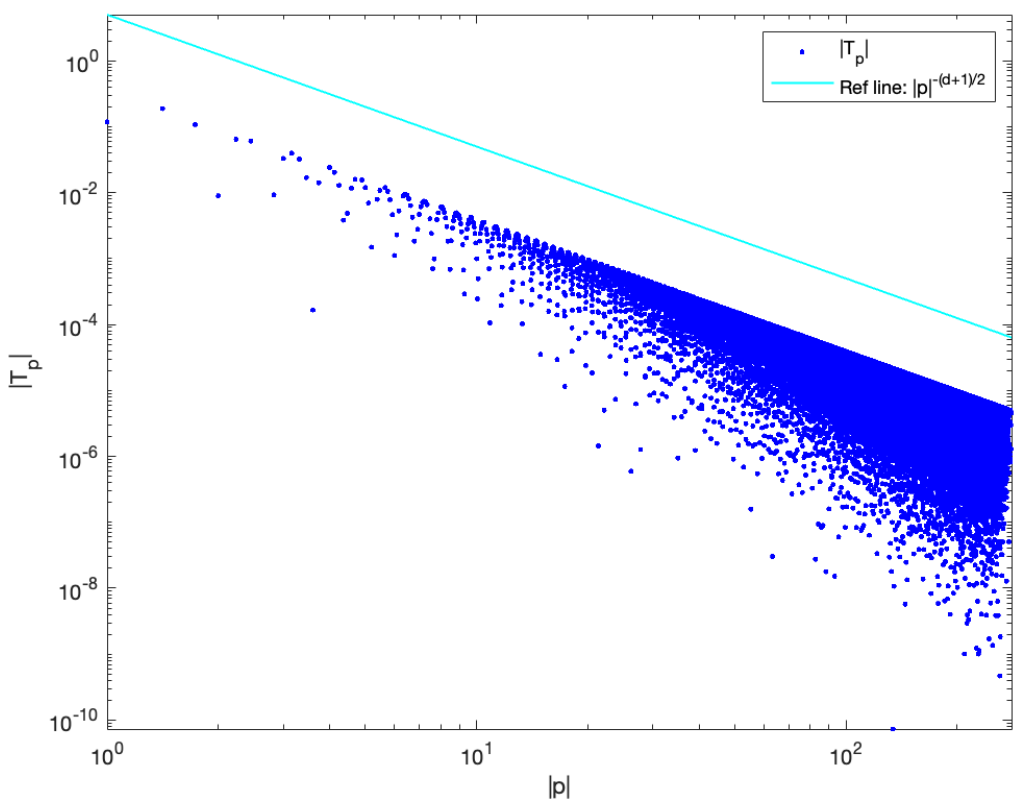}
}
\hspace{-15pt}
\subfigure[3D, $s=0.5$]{
\includegraphics[width=0.33\linewidth]{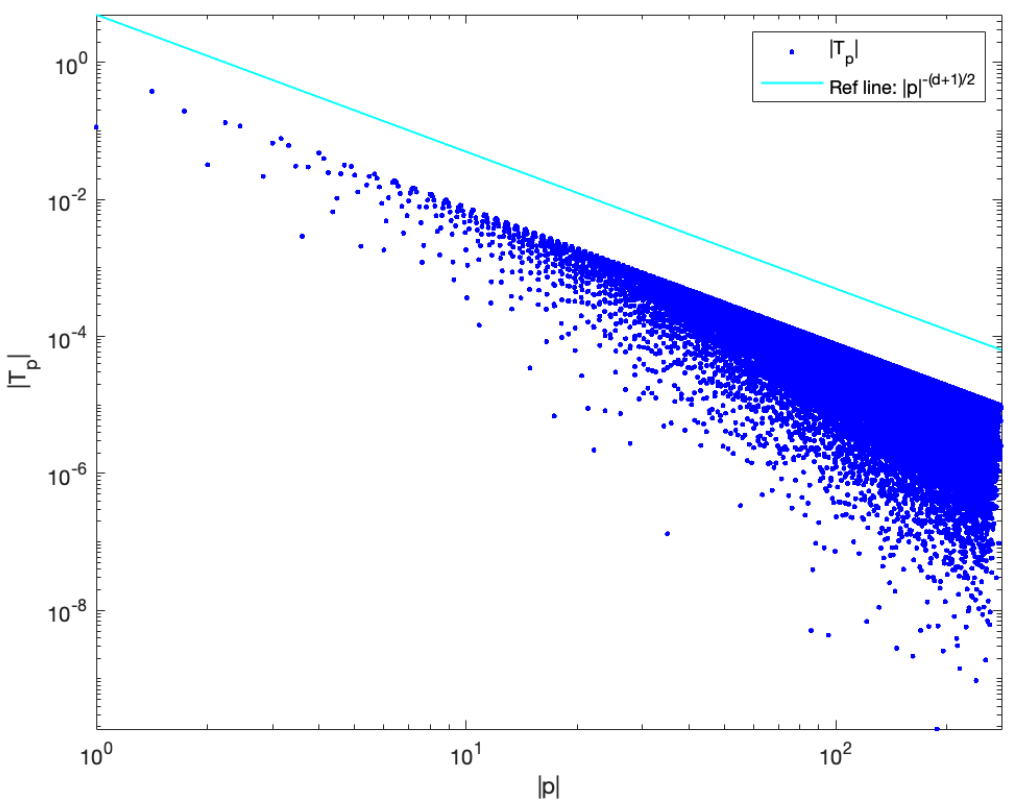}
}
\hspace{-15pt}
\subfigure[3D, $s=0.75$]{
\includegraphics[width=0.33\linewidth]{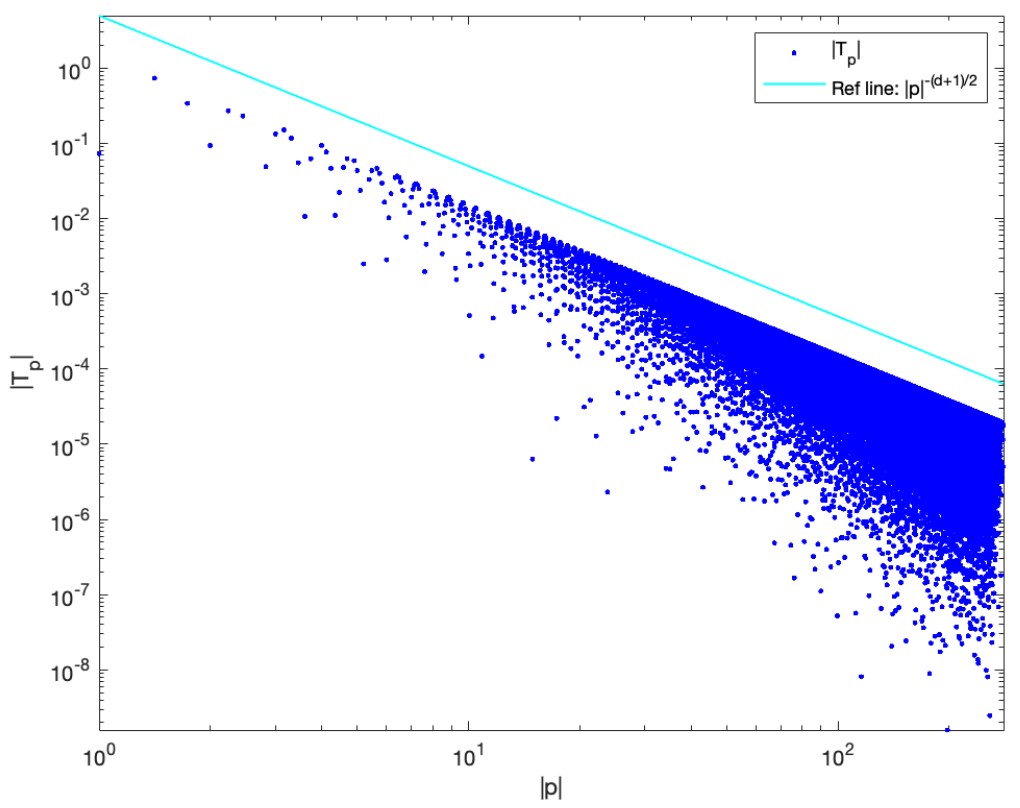}
}
\caption{The decay of $\tilde{T}_{\vec{p}}$ as $|\vec{p}| \to \infty$
for $s = 0.25$, 0.50, and 0.75. The reference lines
are  $T_{\vec{p}} = |\vec{p}|^{-(d+2 s)}$ (for 1D) and $T_{\vec{p}} = |\vec{p}|^{-(d+1)/2}$ (for 2D and 3D).}
\label{fig:T-decay-2}
\end{figure}

\subsection{A modified spectral approximation}
\label{SEC:T-m-spectral}

This approximation is a modification of the spectral approximation discussed in the previous subsection.
We rewrite (\ref{T-2}) into
\begin{align}
\label{T-9}
& T_{\vec{p}} = \frac{1}{(2\pi)^d} \iint_{(-\pi,\pi)^d}  \left ( \psi(\vec{\xi})-|\vec{\xi}|^{2 s} \right ) e^{i \vec{p} \cdot  \vec{\xi}}  d \vec{\xi}
+ \frac{1}{(2\pi)^d} \iint_{(-\pi,\pi)^d}  |\vec{\xi}|^{2 s} e^{i \vec{p} \cdot  \vec{\xi}}  d \vec{\xi}.
\end{align}
We expect that the low regularity issue can be avoided in the first integral on the right-hand side since $\psi(\vec{\xi})$ and
$|\vec{\xi}|^{2 s}$ have similar behavior near the origin. Moreover, we would like to use the algorithm (for spherically symmetric
integrals) in the previous subsection for the second integral. Thus,
we approximate the second integral by replacing the cube by a ball with the radius given in (\ref{R-1}). We get
\begin{align}
\label{T-10}
& \tilde{\tilde{T}}_{\vec{p}} = \frac{1}{(2\pi)^d} \iint_{(-\pi,\pi)^d}  \left ( \psi(\vec{\xi})-|\vec{\xi}|^{2 s} \right ) e^{i \vec{p} \cdot  \vec{\xi}}  d \vec{\xi}
+ \frac{1}{(2\pi)^d} \iint_{B_R(0)}  |\vec{\xi}|^{2 s} e^{i \vec{p} \cdot  \vec{\xi}}  d \vec{\xi}.
\end{align}
Notice that the second term on the right-hand side of the above equation
is actually the spectral approximation (\ref{T-3}) discussed in the previous subsection.
Thus, the current and previous approximations differ in the first term.
We apply the FFT approach of Section~\ref{SEC:T-FFT} to the first integral and the spectral approach of Section~\ref{SEC:T-spectral}
to the second integral on the right-hand side.
The cost of the current approach is essentially the addition of those for the FFT and spectral approaches.
The difference between (\ref{T-2}) and (\ref{T-10}) is
\begin{equation}
\label{T-10-1}
T_{\vec{p}}  - \tilde{\tilde{T}}_{\vec{p}}  =
\frac{1}{(2\pi)^d} \left (\iint_{(-\pi,\pi)^d\setminus B_R(0)} - \iint_{B_R(0)\setminus (-\pi,\pi)^d} \right )
|\vec{\xi}|^{2 s} e^{i \vec{p} \cdot  \vec{\xi}}  d \vec{\xi}.
\end{equation}
In 1D, $B_R(0) = (-\pi, \pi)$. Thus, the right-hand side of the above equation is zero and
$\tilde{\tilde{T}}_{\vec{p}}$ is the same as $T_{\vec{p}}$ in 1D.
As a result, we can compare this modified spectral approximation with the analytical expression (\ref{T-1D})
to see how well the low regularity of the integrand at the origin is treated.
From Tables~\ref{table:approximate-T-1} and \ref{table:approximate-T-2},
we can see that the current approach produces a level of accuracy comparable with the non-uniform FFT approximation
but is much faster than the latter.

It should be pointed out that $\tilde{\tilde{T}}_{\vec{p}}$ is different from $T_{\vec{p}}$ and $\tilde{T}_{\vec{p}}$ in multi-dimensions.
The distributions of its entries are plotted in Fig.~\ref{fig:T-decay-3}. One can see that they are slightly different from
those in Fig.~\ref{fig:T-decay-2} for $\tilde{T}_{\vec{p}}$ but have the same asymptotic decay rate $|\vec{p}|^{-(d+1)/2}$ in 2D and 3D.

\begin{figure}[ht!]
\centering
\subfigure[2D, $s=0.25$]{
\includegraphics[width=0.33\linewidth]{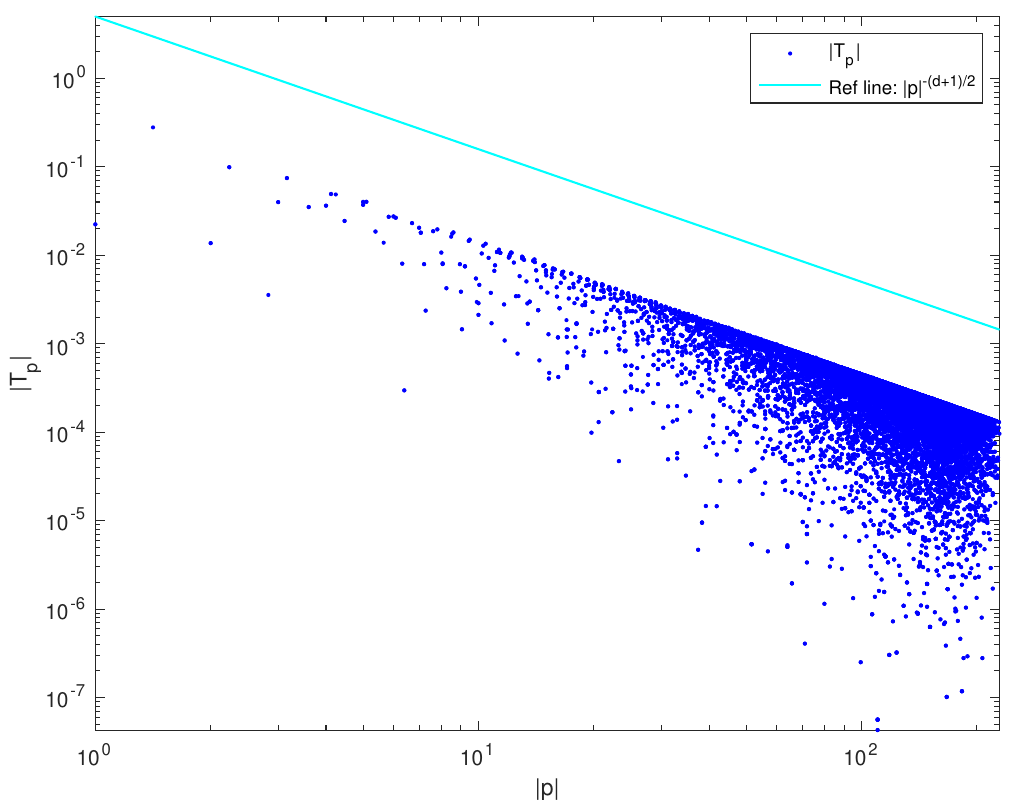}
}
\hspace{-15pt}
\subfigure[2D, $s=0.5$]{
\includegraphics[width=0.33\linewidth]{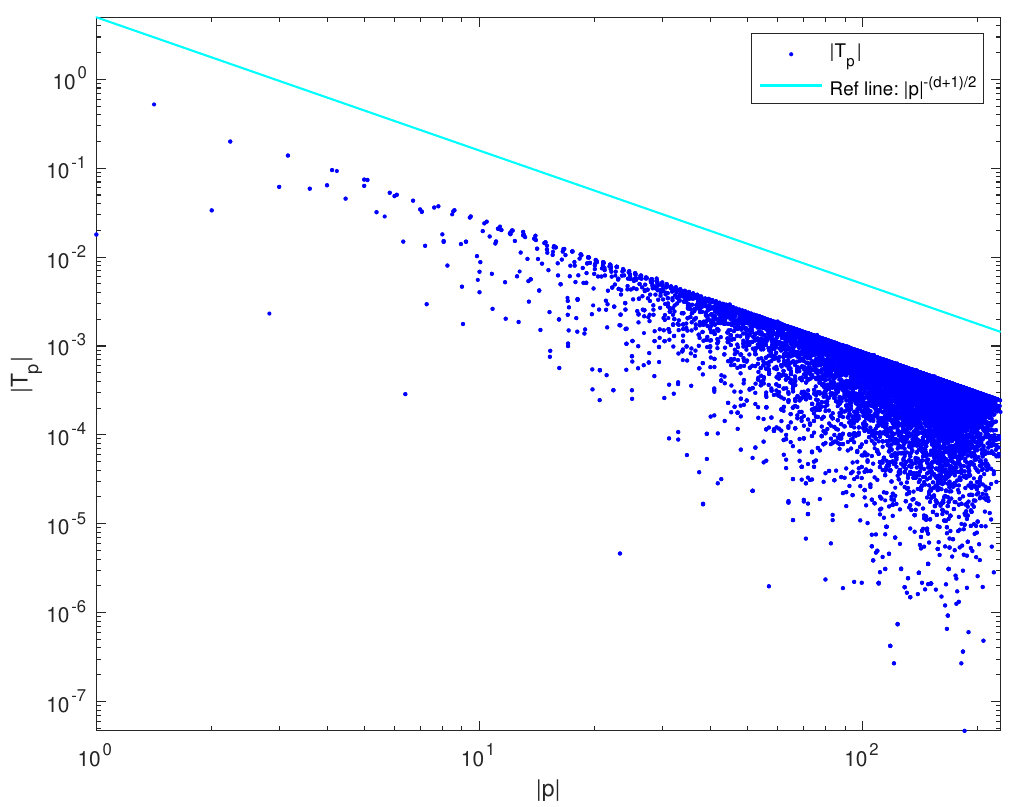}
}
\hspace{-15pt}
\subfigure[2D, $s=0.75$]{
\includegraphics[width=0.33\linewidth]{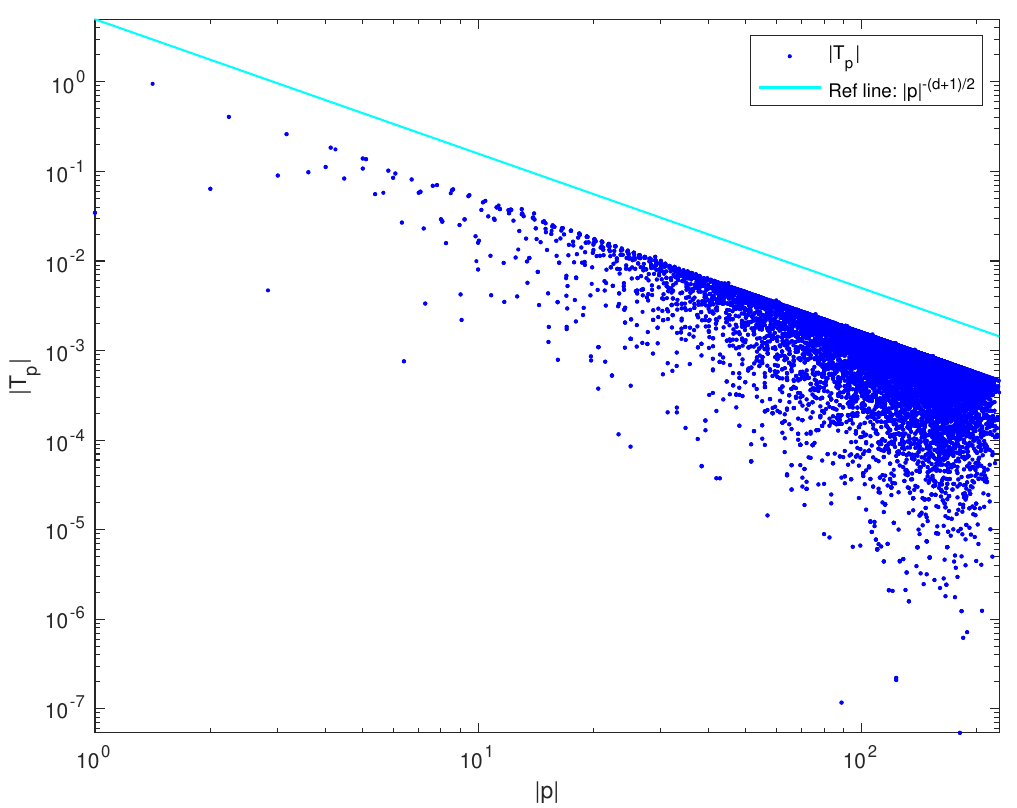}
}
\\
\subfigure[3D, $s=0.25$]{
\includegraphics[width=0.33\linewidth]{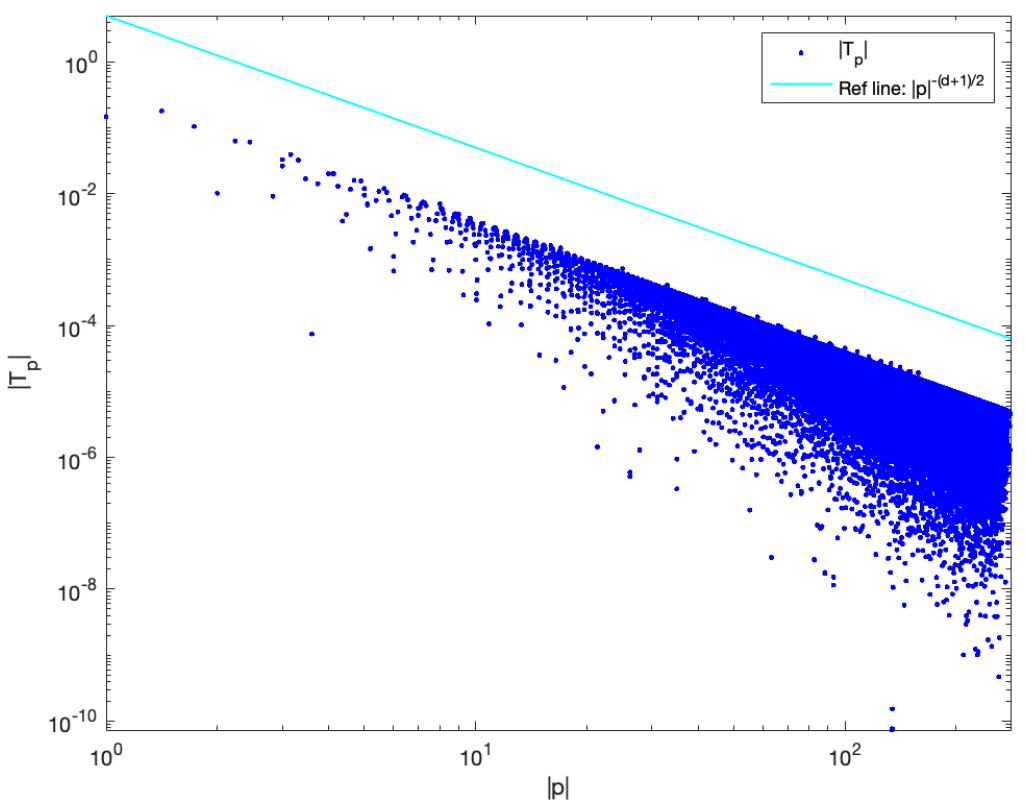}
}
\hspace{-15pt}
\subfigure[3D, $s=0.5$]{
\includegraphics[width=0.33\linewidth]{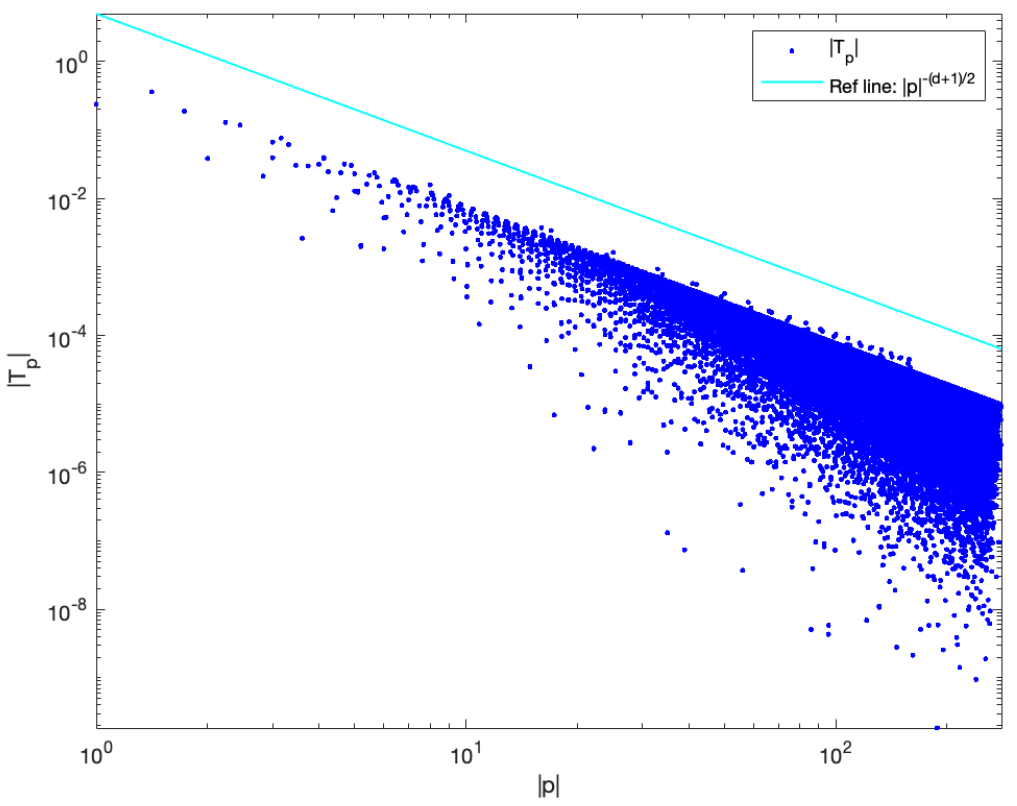}
}
\hspace{-15pt}
\subfigure[3D, $s=0.75$]{
\includegraphics[width=0.33\linewidth]{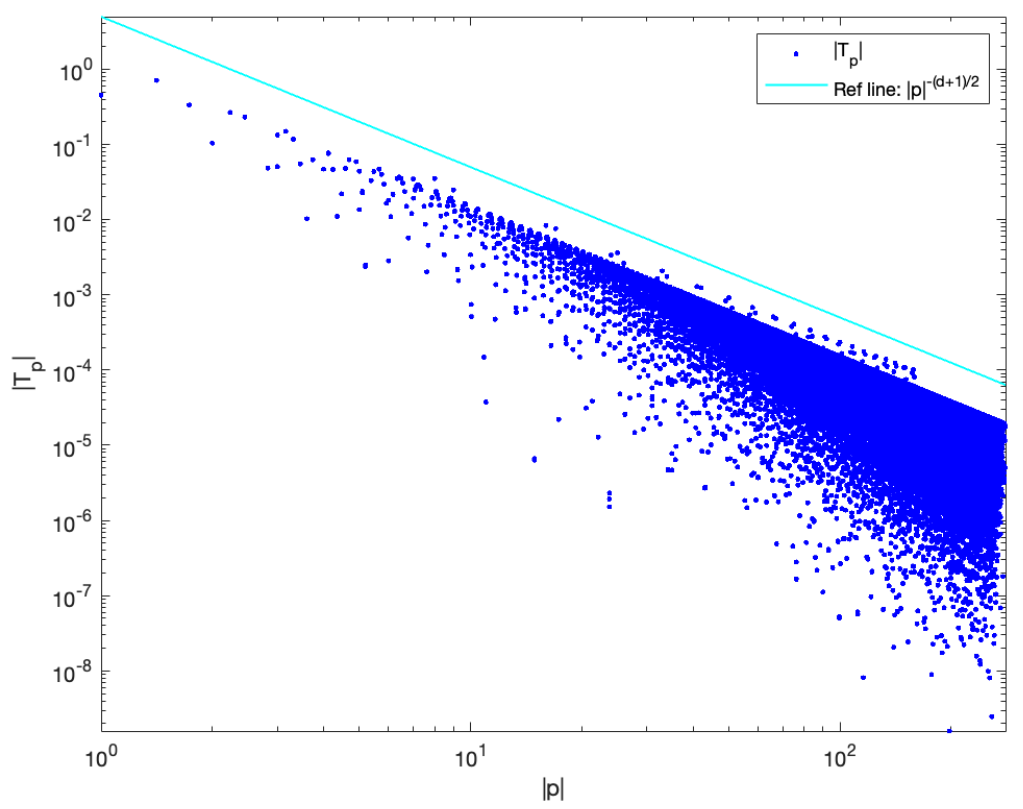}
}
\caption{The decay of $\tilde{\tilde{T}}_{\vec{p}}$ as $|\vec{p}| \to \infty$
for $s = 0.25$, 0.50, and 0.75. The reference line is
$T_{\vec{p}} = |\vec{p}|^{-(d+1)/2}$ (for 2D and 3D).}
\label{fig:T-decay-3}
\end{figure}

\subsection{Summary of approximations of the stiffness matrix $T$}

Properties of the four approximations for the stiffness matrix $T$ are listed in Table~\ref{table:approximate-T-3}.
The FFT and non-uniform FFT approximations are based on the same stiffness matrix $T$ defined in (\ref{T-2}).
The FFT approximation has good accuracy for large $s$ (say, $s\ge 0.5$) but suffers from the low regularity of the integrand at the origin for small $s$.
The non-uniform FFT approximation provides good accuracy for all $s \in (0,1)$ (is less accurate than FFT for large $s$) but is expensive.
The spectral and modified spectral approximations are fast and do not suffer from the low regularity of the integrand for small $s$
but are based on different stiffness matrices, (\ref{T-3}) and (\ref{T-10}), respectively. These matrices
have slower asymptotic decay rates than that of the stiffness matrix (\ref{T-2}) except in 1D where  (\ref{T-2})
and (\ref{T-10}) are the same.

\begin{table}[htb]
\begin{center}
\caption{Summary of properties of the four approximations of the stiffness matrix. The preconditioning and solution
of BVPs are tested with GoFD (cf. the linear system (\ref{GoFD-2})). The accuracy of the FFT approximation is
not good for small $s$. All four approximations lead to expected convergence order of GoFD for BVPs.}
\begin{tabular}{|c|c|c|c|c|c|c|c|}\hline \hline
 & & How fast & \multicolumn{2}{|c|}{Asymptotic} & \multicolumn{2}{|c|}{Precond effective} & Work \\ \cline{4-7}
Approximation & $T$ & to compute & $d=1$ & $d\ge 2$ & sparse & circulant &  for BVPs \\ \hline \hline
FFT & (\ref{T-2}) & fast & \multicolumn{2}{|c|}{$\frac{1}{|\vec{p}|^{d+2s}}$} & yes & yes & yes \\ \hline
non-uniform FFT & (\ref{T-2}) & slow & \multicolumn{2}{|c|}{$\frac{1}{|\vec{p}|^{d+2s}}$} & yes & yes & yes \\ \hline
Spectral & (\ref{T-3}) & fastest & $\frac{1}{|p|^{\min(2,1+2s)}}$ & $\frac{1}{|\vec{p}|^{(d+1)/2}}$ & no & no & yes \\ \hline
Mod. Spectral & (\ref{T-10}) & fast & $\frac{1}{|p|^{1+2s}}$ & $\frac{1}{|\vec{p}|^{(d+1)/2}}$ & no & yes & yes \\
\hline \hline
\end{tabular}
\label{table:approximate-T-3}
\end{center}
\end{table}

\section{Preconditioning}
\label{SEC:preconditioning}

Recall that the linear algebraic system (\ref{GoFD-2}) can be solved efficiently with an iterative method although the coefficient matrix
is dense. This is due mainly to the fact that $I_h^{\text{FD}}$ is sparse and the multiplication of $A_{\text{FD}}$ with vectors
can be carried out efficiently using FFT. We consider preconditioning for the iterative solution of the linear system (\ref{GoFD-2})
to further improve the computational efficiency.

A sparse preconditioner for solving (\ref{GoFD-2}) has been suggested in \cite{HS-2024-GoFD}.
First,  a sparsity pattern based on the FD discretization of the Laplacian is chosen. For example, 9-point and 27-point patterns
can be taken in 2D and 3D, respectively.
Then, a sparse matrix using the entries of $A_{\text{FD}}$ at the positions specified by the pattern can be formed.
Denote these matrices by $A_{\text{FD}}^{(9)}$ and $A_{\text{FD}}^{(27)}$, respectively.
Next, define
\begin{equation}
\label{Ah-4}
A_h^{(9)} =  (I_{h}^{\text{FD}} )^T A_{\text{FD}}^{(9)} I_{h}^{\text{FD}} ,
\qquad A_h^{(27)} = (I_{h}^{\text{FD}} )^T A_{\text{FD}}^{(27)} I_{h}^{\text{FD}}  .
\end{equation}
Finally, the sparse preconditioner for (\ref{GoFD-2}) is obtained using the modified incomplete Cholesky decomposition
of $A_h^{(9)}$ or $A_h^{(27)}$ with dropoff threshold $10^{-3}$. Notice that the so-obtained preconditioner is sparse
and can be computed and applied economically.

Next, we consider a different preconditioner that is based on circulant preconditioners for Toeplitz systems
(see, e.g., \cite{Chan-1996,Chan1988}). In principle, circulant preconditioners for Toeplitz systems can be applied directly
to $A_{\text{FD}}$ since $A_{\text{FD}}$ is a block Toeplitz
matrix with Toeplitz blocks. Here, we present a simple and explicit description based on (\ref{AFD-2}).
Recall that constructing a preconditioner for $A_{\text{FD}}$ means finding a way to solve $u_{m,n}$'s
for given $A_{\text{FD}} \vec{u}_{\text{FD}}$. Unfortunately, we cannot do this directly from
(\ref{AFD-2}) because the Fourier transforms there use
$4 N_{\text{FD}}$ sample points in each axial direction but only $2 N_{\text{FD}}$ points in each axial direction
are used for $u$. To avoid this difficulty, we reduce the number of sampling points to $2N_{\text{FD}}$ and define
an approximation for $A_{\text{FD}}$ as
\begin{align}
(\tilde{A}_{\text{FD}}\vec{u}_{\text{FD}})_{(j,k)}
& = \frac{1}{(2 N_{\text{FD}})^2} \sum_{p=-N_{\text{FD}}}^{N_{\text{FD}}-1} \sum_{q=-N_{\text{FD}}}^{N_{\text{FD}}-1} \check{T}_{p,q} \check{u}_{p,q}
(-1)^{p+N_{\text{FD}} + q+N_{\text{FD}}}
\notag \\
& \qquad \qquad \qquad \qquad \cdot
e^{\frac{i 2 \pi (p+N_{\text{FD}})(j+N_{\text{FD}})}{2N_{\text{FD}}} + \frac{i 2 \pi (q+2N_{\text{FD}})(k+N_{\text{FD}})}{2N_{\text{FD}}}} ,
\quad - N_{\text{FD}} \le j, k \le N_{\text{FD}}
\label{AFD-4}
\end{align}
where
\begin{align}
& \check{T}_{p,q} = \sum_{m=-N_{\text{FD}}}^{N_{\text{FD}}-1} \sum_{n=-N_{\text{FD}}}^{N_{\text{FD}}-1} T_{m,n}
e^{-\frac{i 2 \pi (m+N_{\text{FD}})(p+N_{\text{FD}})}{2N_{\text{FD}}}
-\frac{i 2 \pi (n+N_{\text{FD}})(q+N_{\text{FD}})}{2N_{\text{FD}}}},
\label{T-11}
\\
& \check{u}_{p,q}  = \sum_{m = - N_{\text{FD}}}^{ N_{\text{FD}}-1} \sum_{n=- N_{\text{FD}}}^{ N_{\text{FD}}-1}
u_{m,n} e^{-\frac{i 2 \pi (m+N_{\text{FD}})(p+N_{\text{FD}})}{2N_{\text{FD}}}
-\frac{ i 2 \pi (n+N_{\text{FD}})(q+N_{\text{FD}})}{2N_{\text{FD}}}} .
\label{uhat-1}
\end{align}
We hope that $\tilde{A}_{\text{FD}}$ is a good approximation of $A_{\text{FD}}$. Most importantly, using the discrete Fourier transform and its inverse
we can find the inverse of $\tilde{A}_{\text{FD}}$ as
\begin{align}
(\tilde{A}_{\text{FD}}^{-1}\vec{v}_{\text{FD}})_{(j,k)}
& = \frac{1}{(2 N_{\text{FD}})^2} \sum_{p=-N_{\text{FD}}}^{N_{\text{FD}}-1} \sum_{q=-N_{\text{FD}}}^{N_{\text{FD}}-1} \check{T}_{p,q}^{-1} \check{v}_{p,q}
(-1)^{p+N_{\text{FD}} + q+N_{\text{FD}}}
\notag \\
& \qquad \qquad \qquad \qquad \cdot
e^{\frac{i 2 \pi (p+N_{\text{FD}})(j+N_{\text{FD}})}{2N_{\text{FD}}} + \frac{i 2 \pi (q+2N_{\text{FD}})(k+N_{\text{FD}})}{2N_{\text{FD}}}} ,
\quad - N_{\text{FD}} \le j, k \le N_{\text{FD}}
\label{AFD-5}
\end{align}
where $\check{T}$ is given in (\ref{T-11}) and $\check{v}$ is the discrete Fourier transform of $v_h$ (which has a similar expression as (\ref{uhat-1})).
The computation of $\tilde{A}_{\text{FD}}^{-1}\vec{v}_{\text{FD}}$ requires three FFT/inverse FFT,
totaling $\mathcal{O}(N_{\text{FD}}^{d} \log (N_{\text{FD}}^{d}))$ flops (in $d$-dimensions).

Once we have obtained the preconditioner for $A_{\text{FD}}$, we can form the preconditioner for the coefficient matrix
of (\ref{GoFD-2}). Notice that
\begin{equation}
\label{Ah-inverse-1}
((I_{h}^{\text{FD}})^T A_{\text{FD}} I_{h}^{\text{FD}})^{-1}
= (I_{h}^{\text{FD}})^{+} A_{\text{FD}}^{-1} ((I_{h}^{\text{FD}})^{+})^T,
\end{equation}
where $(I_{h}^{\text{FD}})^{+}$ is the Moore-Penrose pseudo-inverse of $I_{h}^{\text{FD}}$,
\begin{equation}
\label{Ih-pseudo-inverse}
(I_{h}^{\text{FD}})^{+} = (I_{h}^{\text{FD}})^T \Big ( (I_{h}^{\text{FD}})^T I_{h}^{\text{FD}} \Big )^{-1}.
\end{equation}
The final circulant preconditioner for the system (\ref{GoFD-2})
is obtained by replacing $A_{\text{FD}}^{-1}$ in (\ref{Ah-inverse-1}) with $\tilde{A}_{\text{FD}}^{-1}$ and
$(I_{h}^{\text{FD}})^T I_{h}^{\text{FD}}$ in (\ref{Ih-pseudo-inverse}) with
its modified incomplete Cholesky decomposition with dropoff threshold $10^{-3}$.

Convergence histories of PCG for solving (\ref{GoFD-2}) are plotted in Fig.~\ref{fig:precond-1}.
It can be seen that for the FFT approximation of the stiffness matrix,
both the sparse and circulant preconditioners perform well, significantly reducing the number of PCG iterations
to reach a prescribed tolerance, with the latter doing slightly better than the former.
For the spectral approximation, both preconditioners do not work, requiring more or much more iterations
than CG without preconditioning. Recall that the spectral approximation uses a different stiffness matrix,
(\ref{T-3}), which has the different integrand and domain from (\ref{T-2}).
For the modified spectral approximation, the stiffness matrix is given in (\ref{T-10}), which differs
from (\ref{T-2}) only by the integration domain. In this case, the circulant preconditioner works well as it does for the FFT approximation.
This actually is the main motivation to introduce the modified spectral approximation.
Unfortunately, the sparse preconditioner still does not work for this case.

At this point, it remains unclear why both sparse and circulant preconditioners do not work for the spectral approximation.
Understanding this and developing effective preconditioners for the spectral approximation
is an interesting topic for future research.

\begin{figure}[ht!]
\centering
\subfigure[2D, FFT appr.]{
\includegraphics[width=0.33\linewidth]{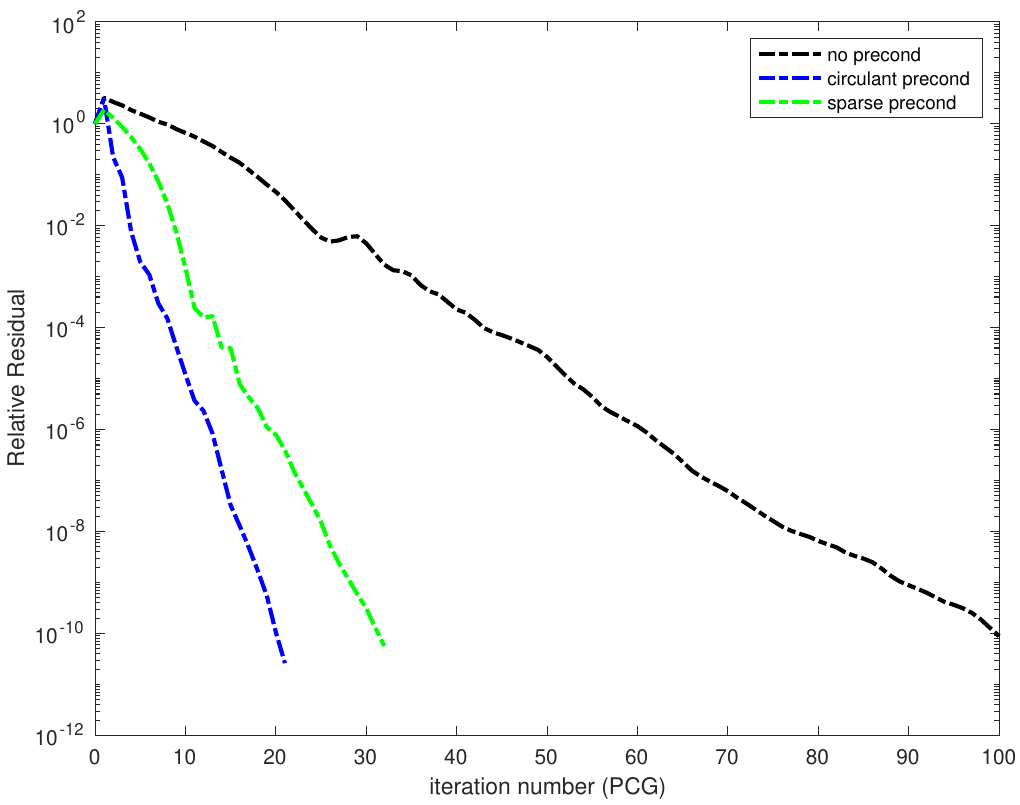}
}
\hspace{-15pt}
\subfigure[2D, Spectral appr.]{
\includegraphics[width=0.33\linewidth]{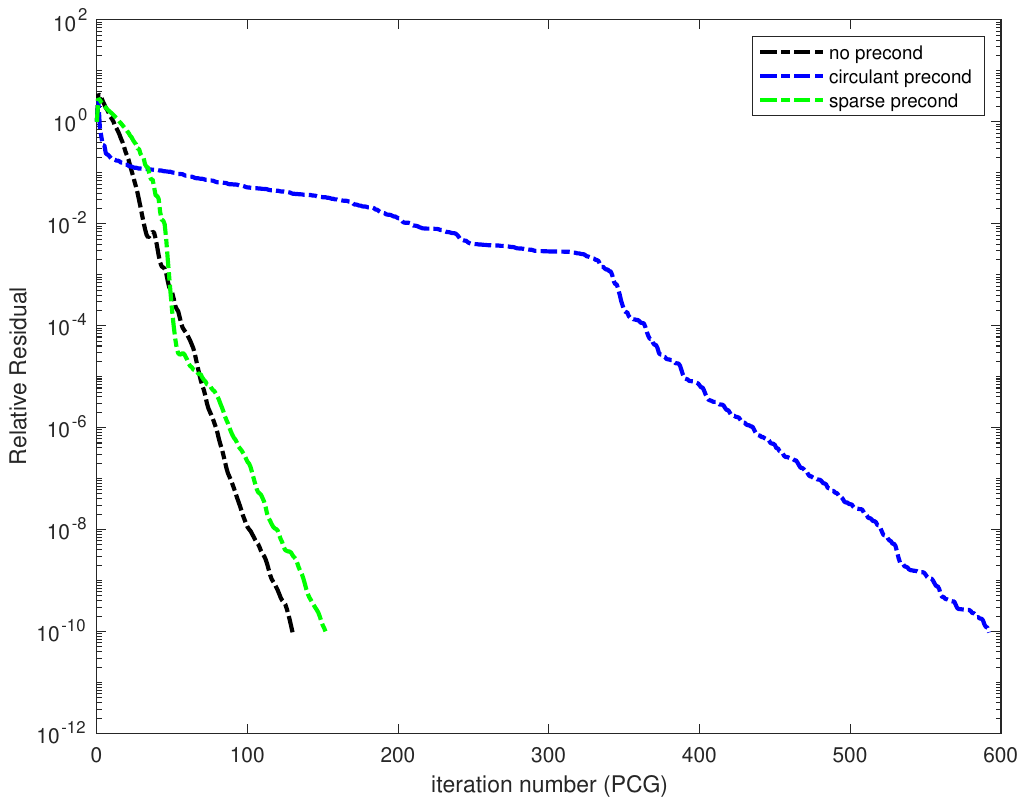}
}
\hspace{-15pt}
\subfigure[2D, Mod. spectral appr.]{
\includegraphics[width=0.33\linewidth]{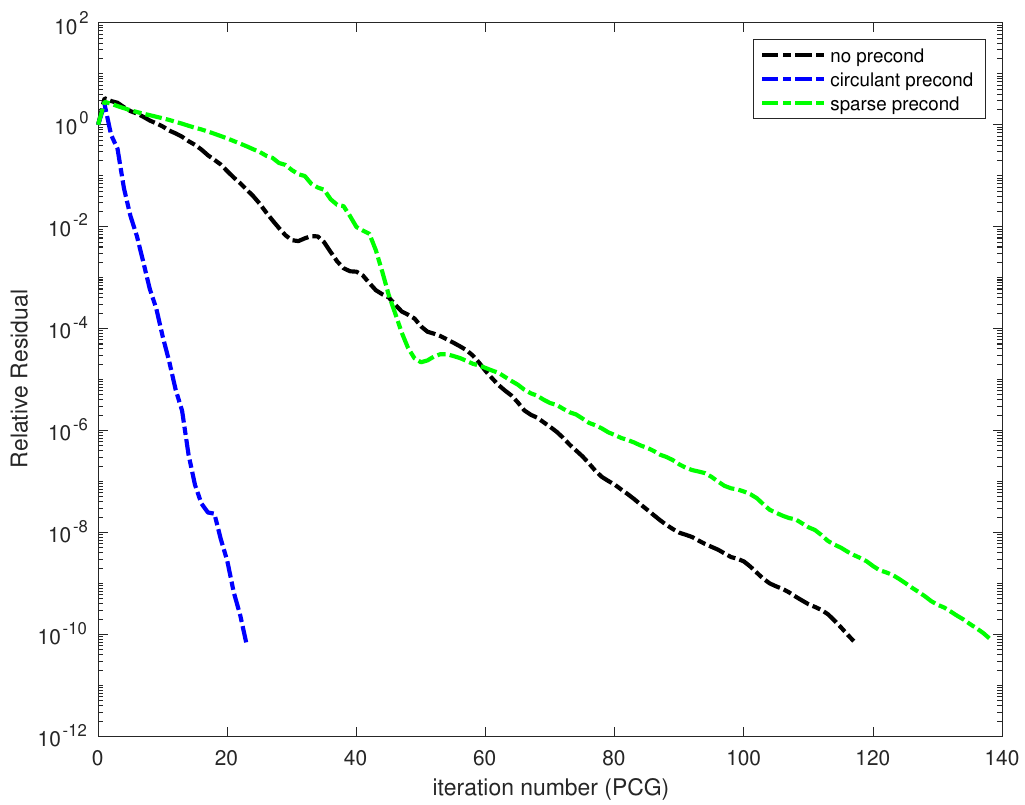}
}
\\
\subfigure[3D, FFT appr.]{
\includegraphics[width=0.33\linewidth]{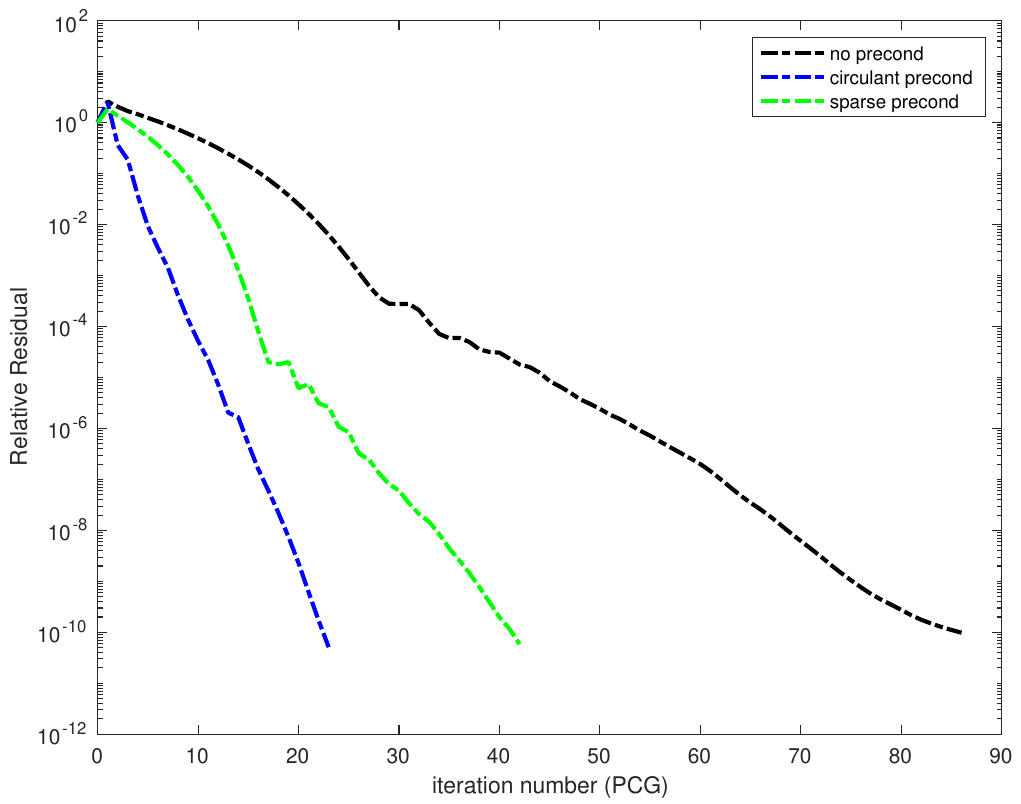}
}
\hspace{-15pt}
\subfigure[3D, Spectral appr.]{
\includegraphics[width=0.33\linewidth]{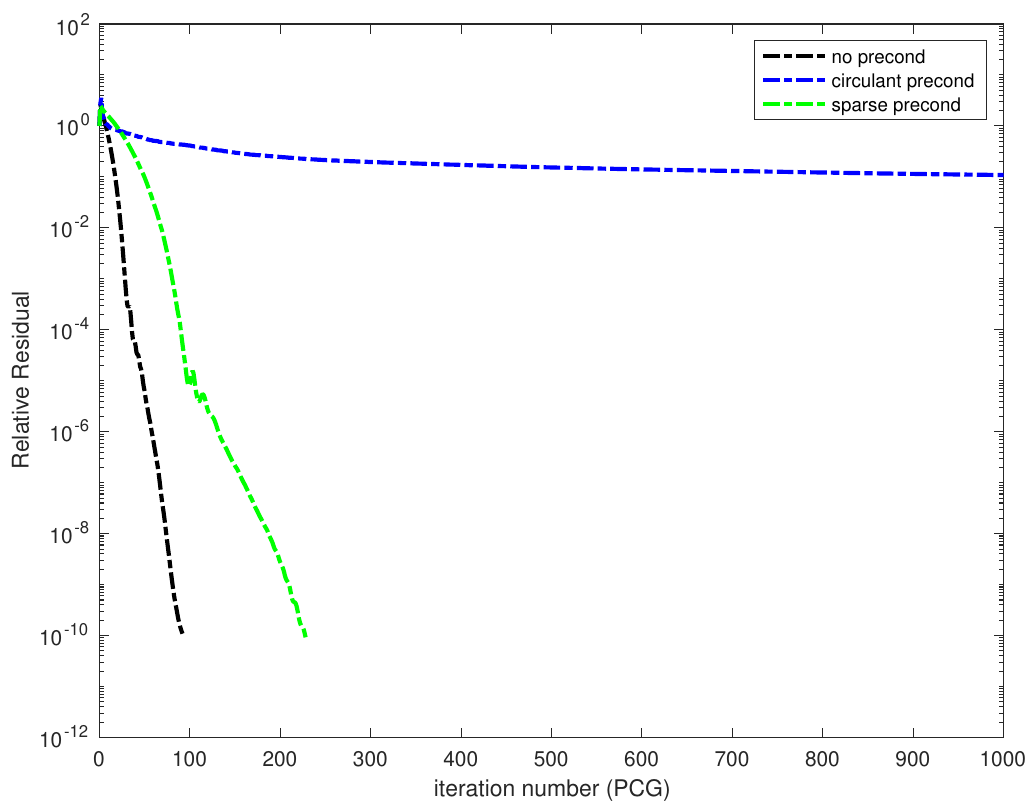}
}
\hspace{-15pt}
\subfigure[3D, Mod. spectral appr.]{
\includegraphics[width=0.33\linewidth]{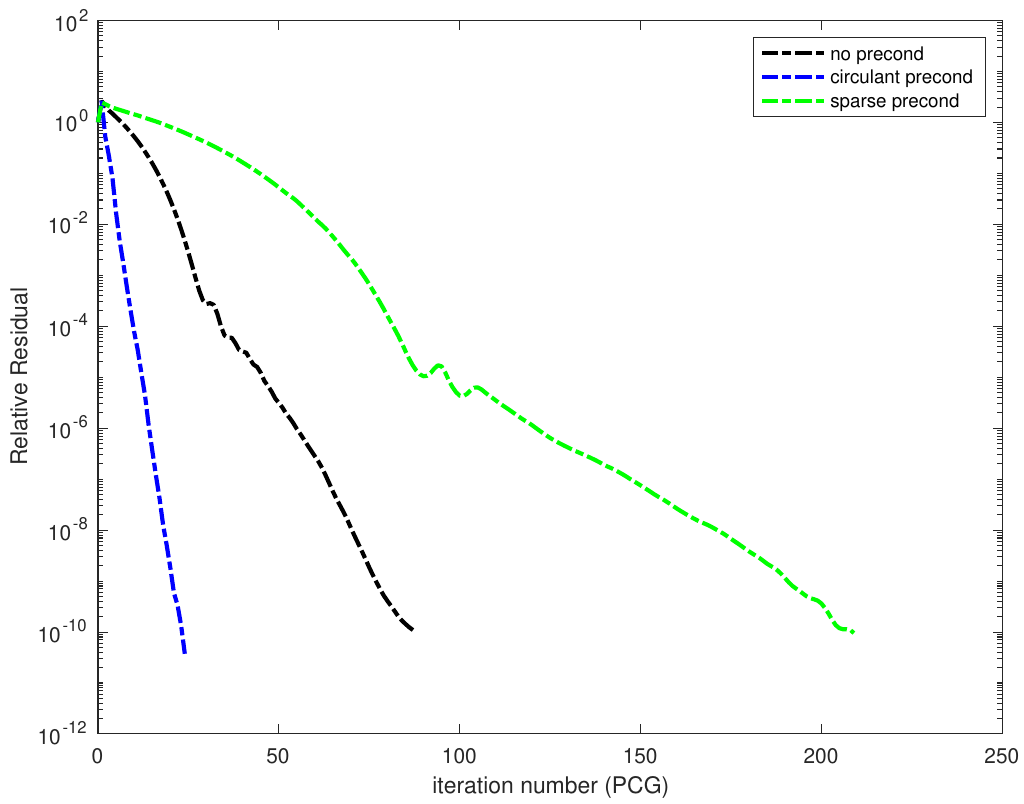}
}
\caption{Example (\ref{main-example}) with $s = 0.75$ and in 2D and 3D.
Convergence histories of PCG for solving (\ref{GoFD-2}) with $tol = 10^{-10}$ for three different approximations
of the stiffness matrix $T$. $M = 2^{14}$, $n_G = 64$, $N = 11,930$ (for mesh $\mathcal{T}_h$), and
$N_{\text{FD}} = 75$ (for mesh $\mathcal{T}_{\text{FD}}$) are used in 2D and
$M = 2^{10}$, $n_G = 64$, $N = 922,447$ (for mesh $\mathcal{T}_h$), and
$N_{\text{FD}} = 110$ (for mesh $\mathcal{T}_{\text{FD}}$) are used in 3D. }
\label{fig:precond-1}
\end{figure}

\section{Numerical examples}
\label{SEC:numerics}

In this section we present numerical results obtained for Example~(\ref{main-example}) in 2D and 3D
to demonstrate that GoFD with all four approximations for the stiffness matrix
produces numerical solutions for BVP (\ref{BVP-1}) with expected convergence order.
In our computation, the linear system (\ref{GoFD-2}) is solved using PCG with the circulant preconditioner
except for the spectral approximation when CG without preconditioning is used.
We use $n_G = 64$ (in the spectral and modified spectral approximations)
and $M=2^{14}$ (for 2D) and $M=2^{10}$ (for 3D) in the FFT and modified spectral approximations.
Results obtained with the non-uniform FFT approximation are omitted here
since they are almost identical to those obtained with the FFT approximation
and these two approximations are based on the same stiffness matrix (\ref{T-2}).

The $L^2$ norm of the error obtained with GoFD with quasi-uniform meshes is plotted as a function of $N$ (the number of elements
of mesh $\mathcal{T}_h$) in Figs.~\ref{fig:GoFD_Err-2d-1} and \ref{fig:GoFD_Err-3d-1} for 2D and 3D, respectively.
The results are almost identical for all three approximations for the stiffness matrix.
Moreover, the error shows a expected convergence rate,
$\mathcal{O}(\bar{h}^{\min(1, 0.5+s)})$, where $\bar{h} = N^{-1/d}$
is the average element diameter.
These results show that GoFD works well for solving BVP (\ref{BVP-1}) with all four approximations of the stiffness matrix.

Next, we consider adaptive meshes.
It is known (see, e.g., \cite{Acosta201701,Ros-Oton-2014})
that the solution of (\ref{BVP-1}) has low regularity near the boundary of $\Omega$,
\[
u(\vec{x}) \sim \text{dist}^s(\vec{x}, \partial \Omega),\quad \text{ for } \vec{x} \text{ close to } \partial \Omega
\]
where $\text{dist}(\vec{x}, \partial \Omega)$ denotes the distance from $\vec{x}$ to the boundary of $\Omega$.
Mesh adaptation is useful to improve accuracy and convergence order.
Here, we use the so-called moving mesh PDE (MMPDE) method \cite{HR11} to generate an adaptive mesh based on
the function $v(\vec{x}) = \text{dist}^s(\vec{x}, \partial \Omega)$. Once an adaptive mesh is generated, BVP (\ref{BVP-1})
is solved using GoFD on the generated mesh. No iteration between mesh generation and BVP solution is taken.
To save space, we do not give the description of the MMPDE method here. The interested reader is referred to
\cite[Section~4]{HS-2024-GoFD} and \cite{HR11} for the detail of the method.

Typical adaptive meshes and computed solutions in 2D are shown in Fig.~\ref{fig:GoFD-mesh-solution-2d}. It can be seen that
mesh concentration is higher near the boundary. To save space, we present the error in Fig.~\ref{fig:GoFD_Err-2d-2}
only for GoFD with the modified spectral approximation of the stiffness matrix.
It is clear that the error converges at a second-order rate $\mathcal{O}(\bar{h}^2)$.
It is also interesting to observe that for $s = 0.25$ and $0.5$, the error has a slower convergence rate before it reaches
the second-order rate for finer meshes.

We present 3D adaptive mesh results in Fig.~\ref{fig:GoFD_Err-3d-2}. The error is smaller than those
in Fig.~\ref{fig:GoFD_Err-3d-1} for quasi-uniform meshes but has not reached second order
especially for $s = 0.25$ and $0.5$ for the considered range of the mesh size.
The computation with meshes of larger $N$ runs out memory. For those cases, the minimum height $a_h$
of the adaptive mesh is very small, requiring a very large $N_{\text{FD}}$ (cf. (\ref{hFD-2})) and thus a large amount
of memory. This reflects a drawback of GoFD with 3D adaptive meshes.

\begin{figure}[ht!]
\centering
\subfigure[FFT appr., $s=0.25$]{
\includegraphics[width=0.33\linewidth]{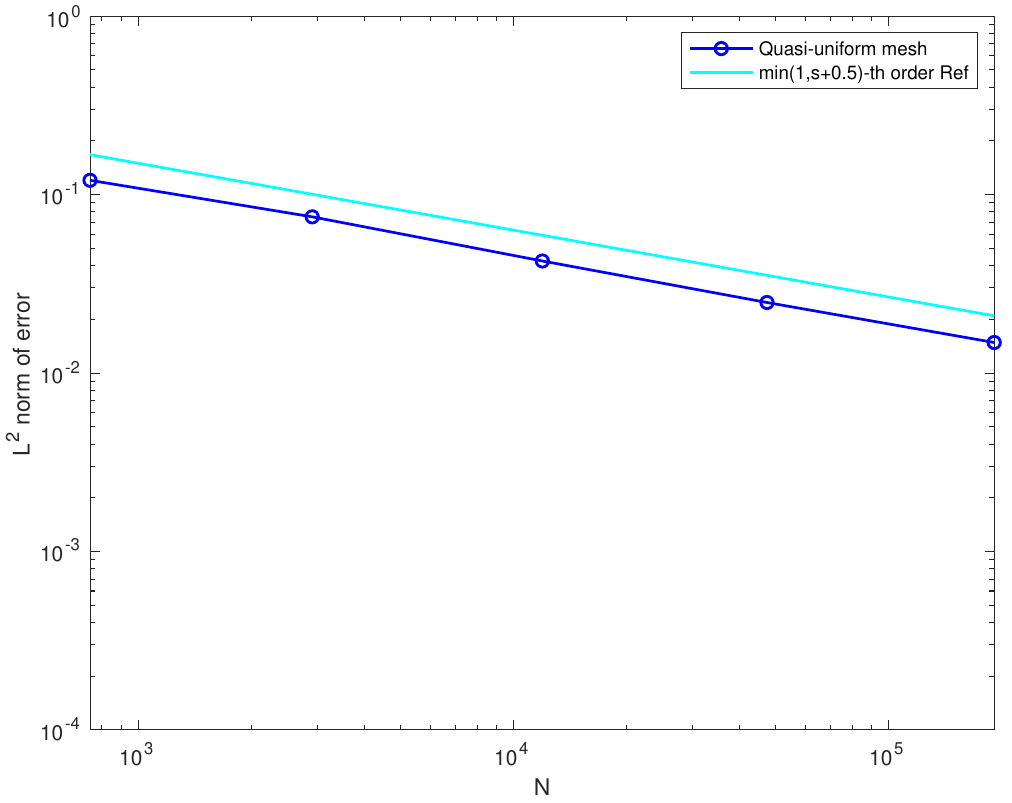}
}
\hspace{-15pt}
\subfigure[FFT appr., $s=0.50$]{
\includegraphics[width=0.33\linewidth]{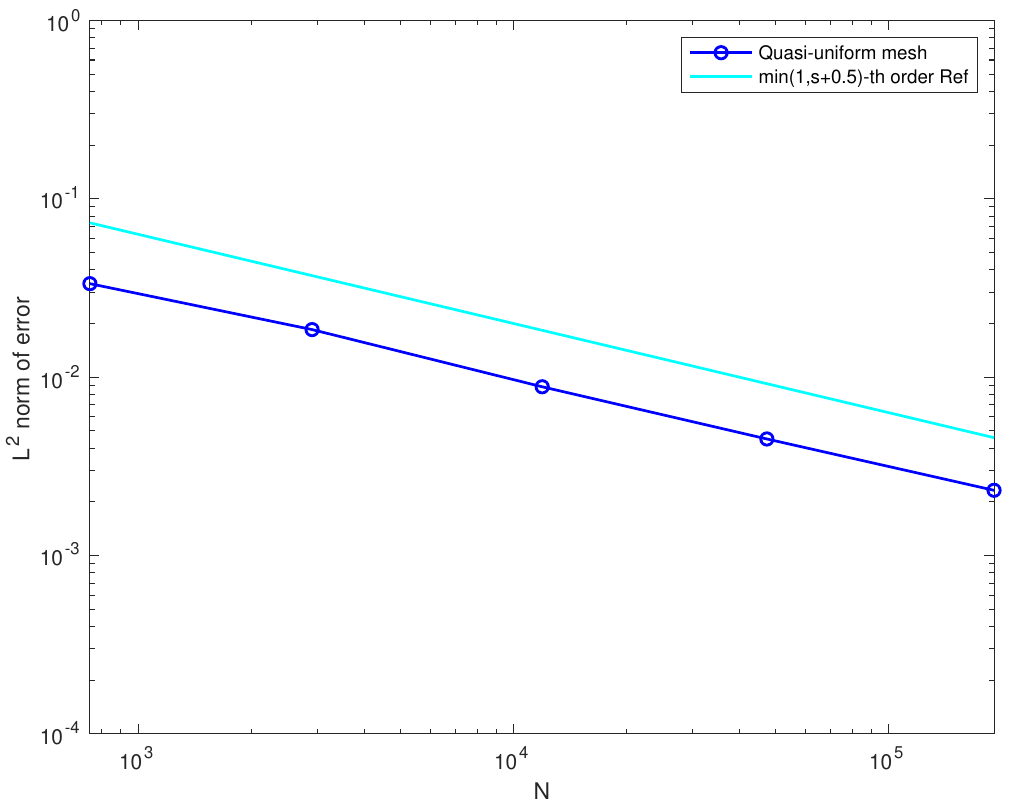}
}
\hspace{-15pt}
\subfigure[FFT appr., $s=0.75$]{
\includegraphics[width=0.33\linewidth]{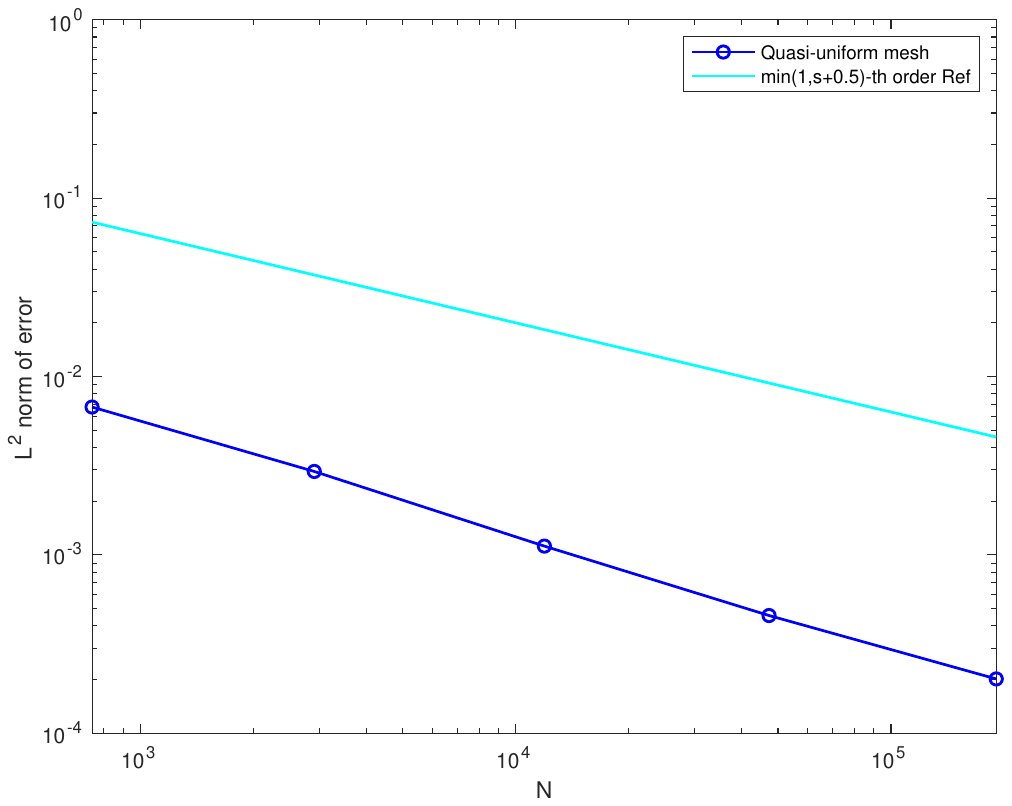}
}
\\
\subfigure[Spectral appr., $s=0.25$]{
\includegraphics[width=0.33\linewidth]{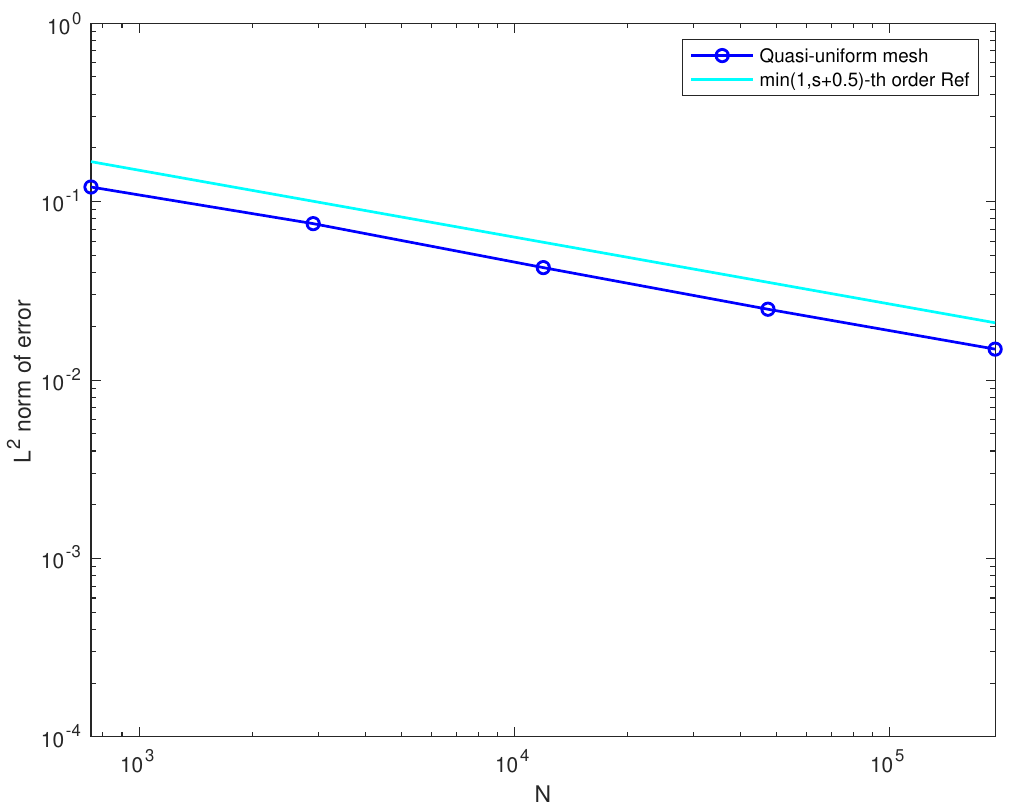}
}
\hspace{-15pt}
\subfigure[Spectral appr., $s=0.50$]{
\includegraphics[width=0.33\linewidth]{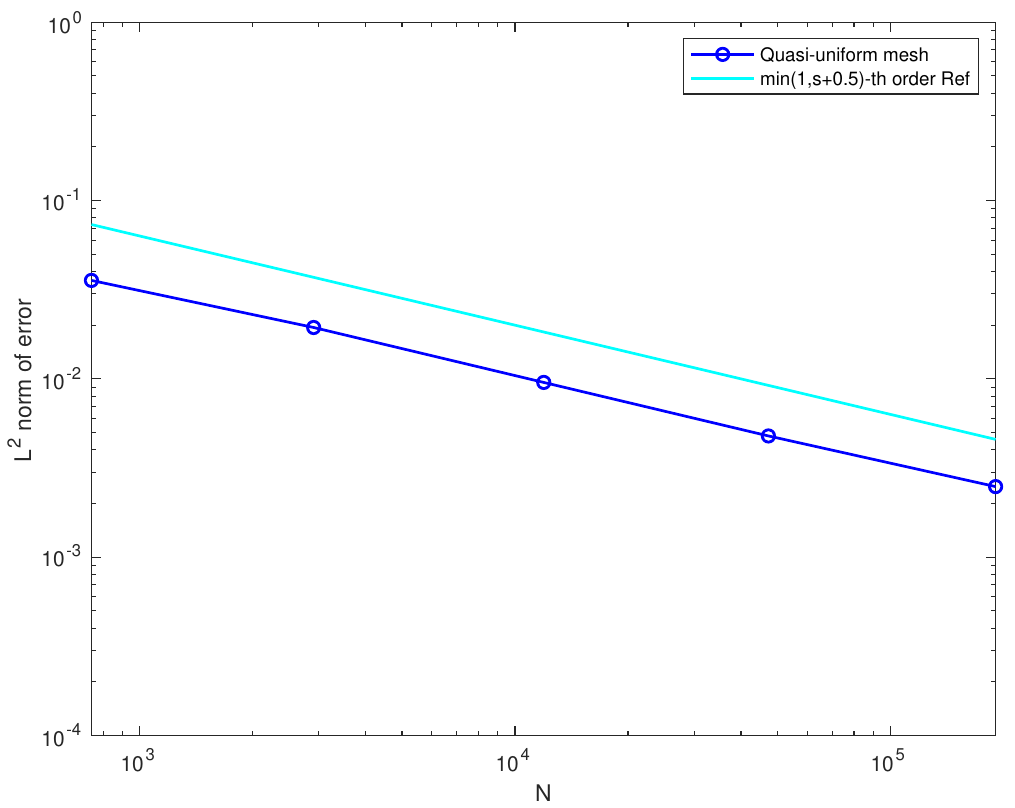}
}
\hspace{-15pt}
\subfigure[Spectral appr., $s=0.75$]{
\includegraphics[width=0.33\linewidth]{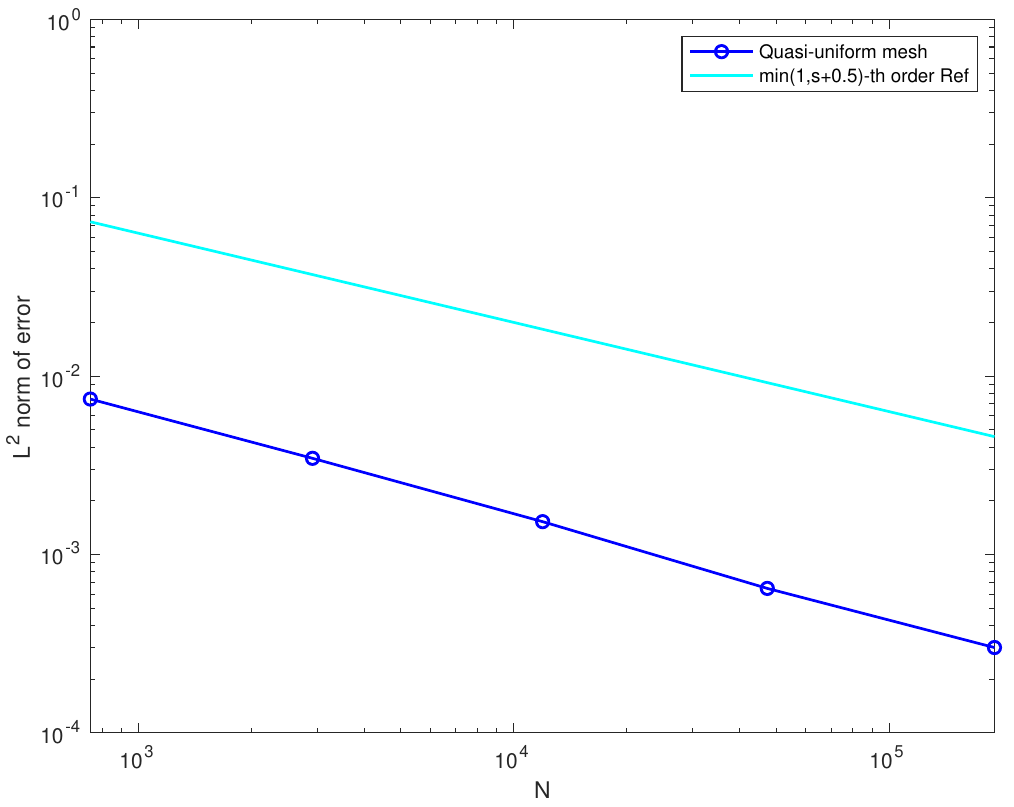}
}
\\
\subfigure[Mod. spectral appr., $s=0.25$]{
\includegraphics[width=0.33\linewidth]{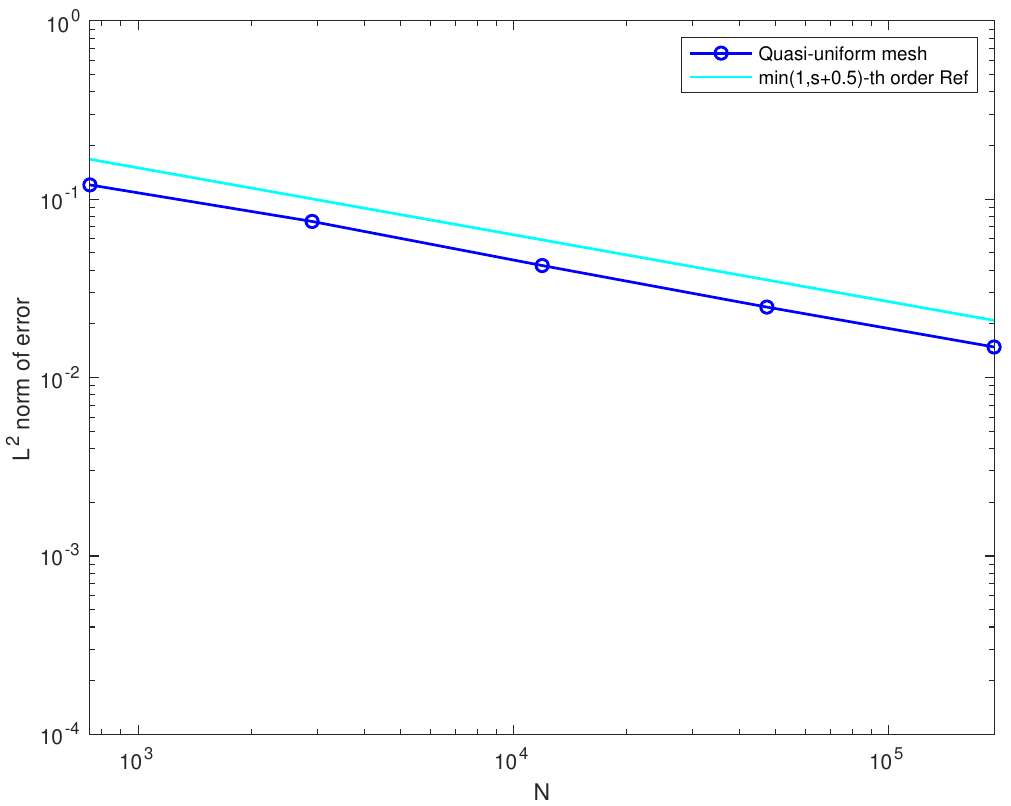}
}
\hspace{-15pt}
\subfigure[Mod. spectral appr., $s=0.50$]{
\includegraphics[width=0.33\linewidth]{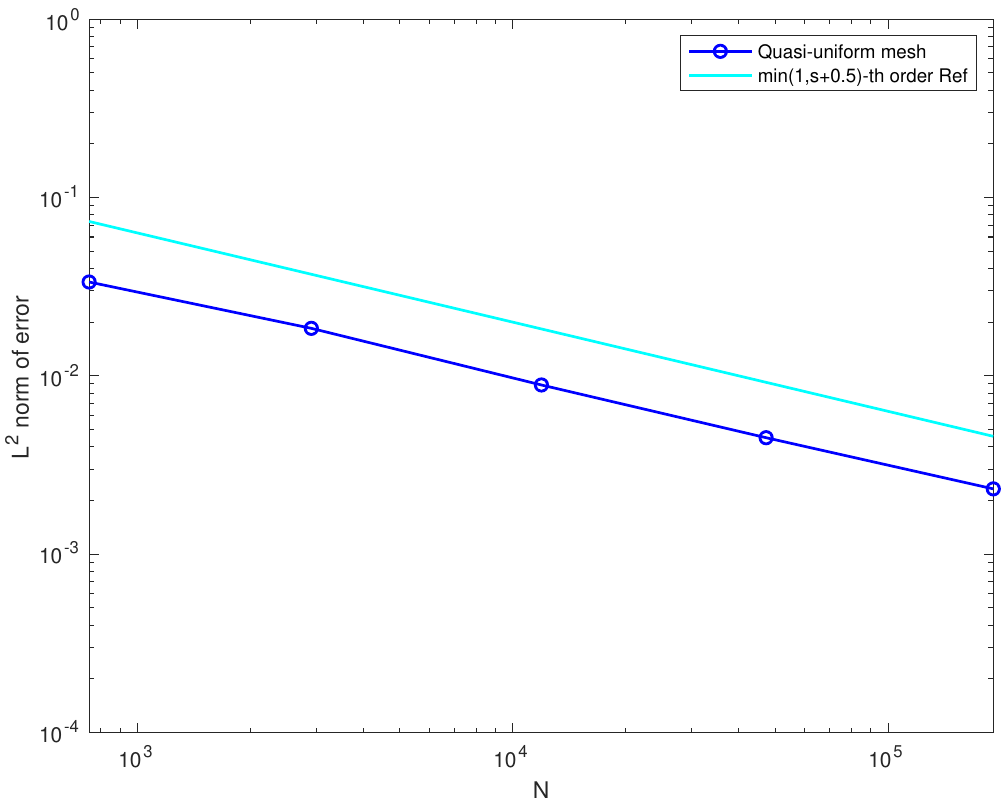}
}
\hspace{-15pt}
\subfigure[Mod. spectral appr., $s=0.75$]{
\includegraphics[width=0.33\linewidth]{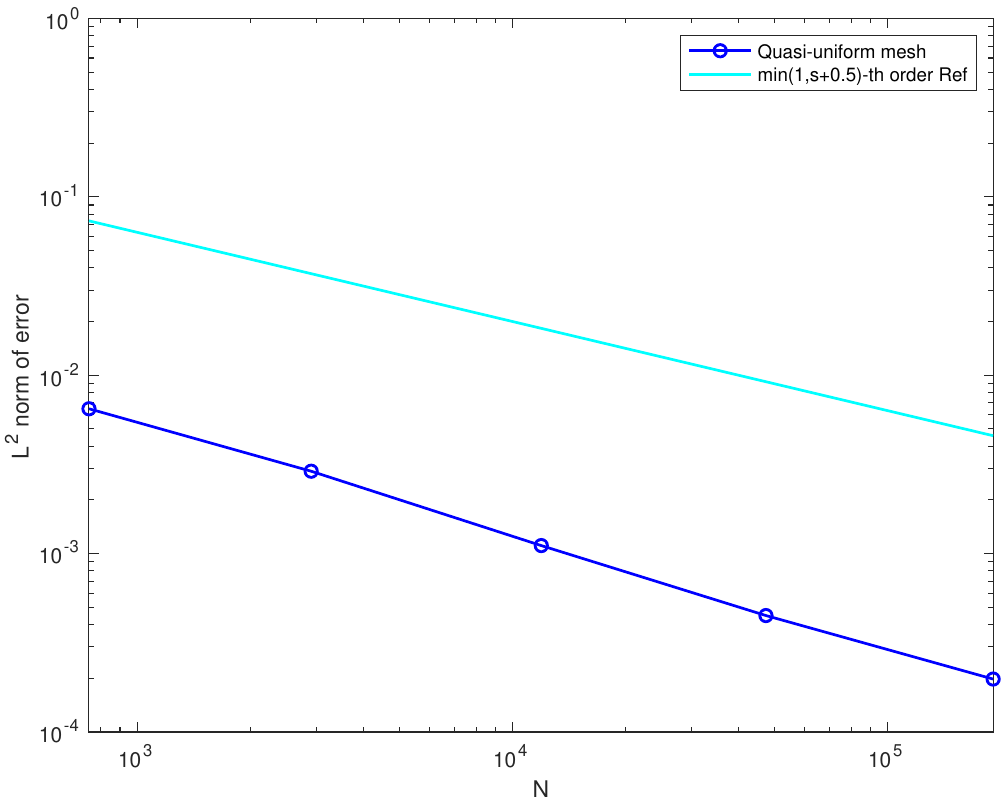}
}
\caption{Example (\ref{main-example}) in 2D. The $L^2$ norm of the error is plotted as a function $N$ for GoFD with various approximations
of the stiffness matrix.}
\label{fig:GoFD_Err-2d-1}
\end{figure}

\begin{figure}[ht!]
\centering
\subfigure[FFT appr., $s=0.25$]{
\includegraphics[width=0.33\linewidth]{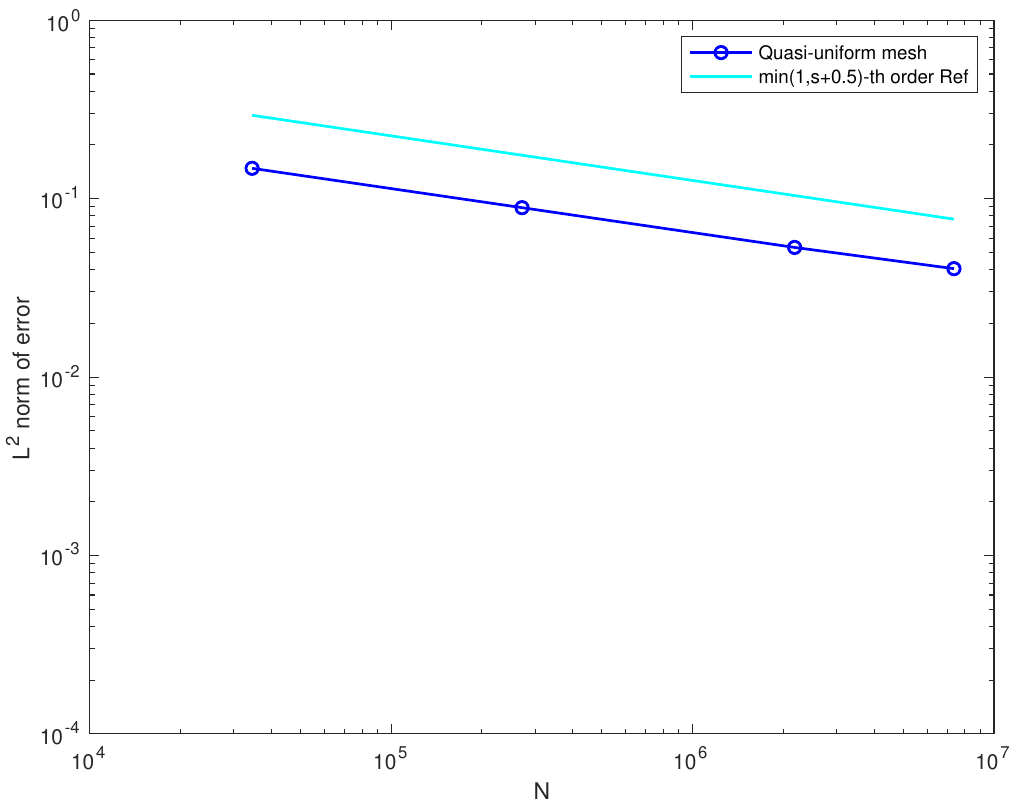}
}
\hspace{-15pt}
\subfigure[FFT appr., $s=0.50$]{
\includegraphics[width=0.33\linewidth]{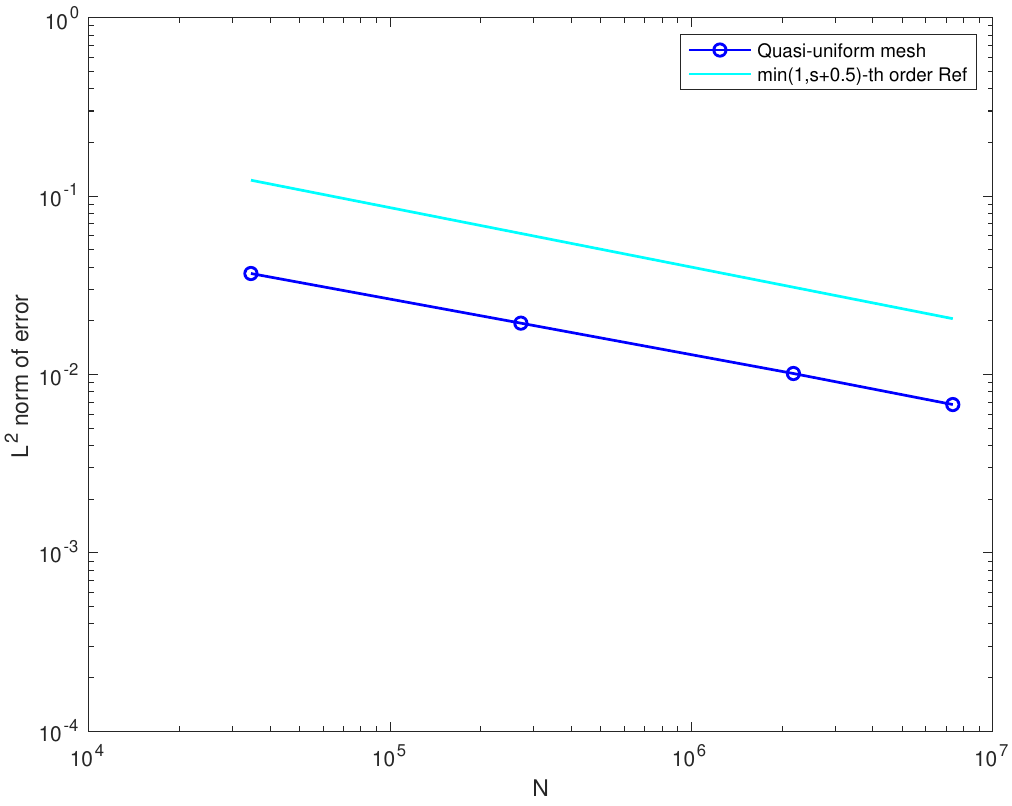}
}
\hspace{-15pt}
\subfigure[FFT appr., $s=0.75$]{
\includegraphics[width=0.33\linewidth]{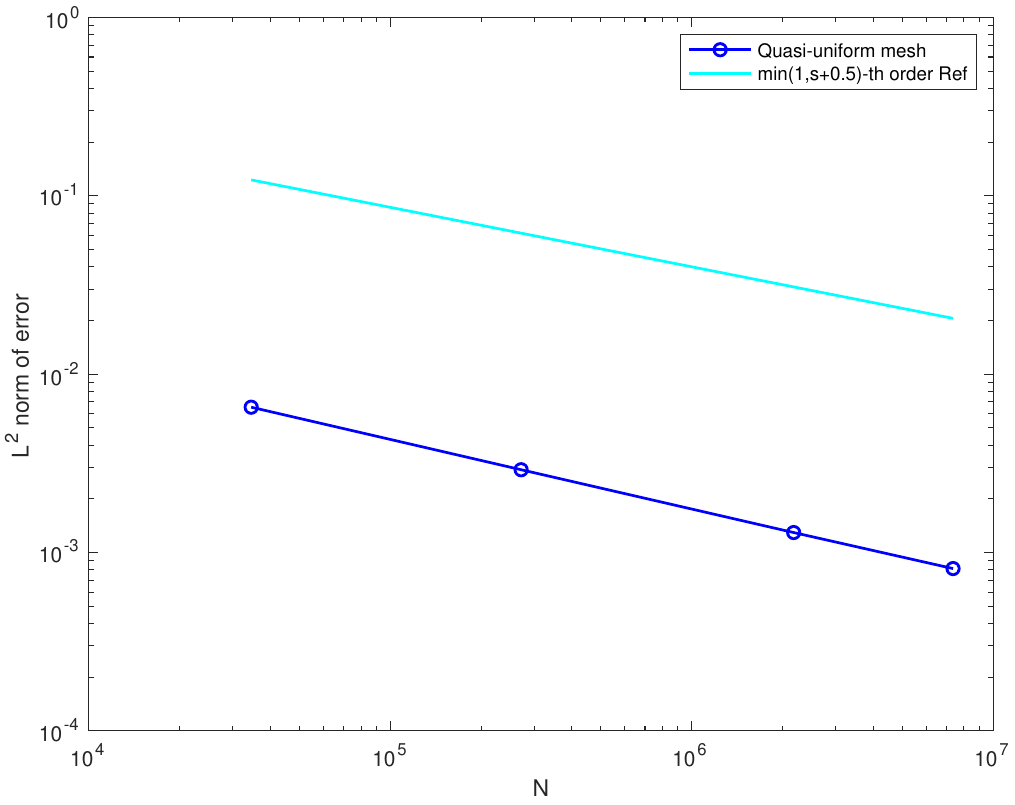}
}
\\
\subfigure[Spectral appr., $s=0.25$]{
\includegraphics[width=0.33\linewidth]{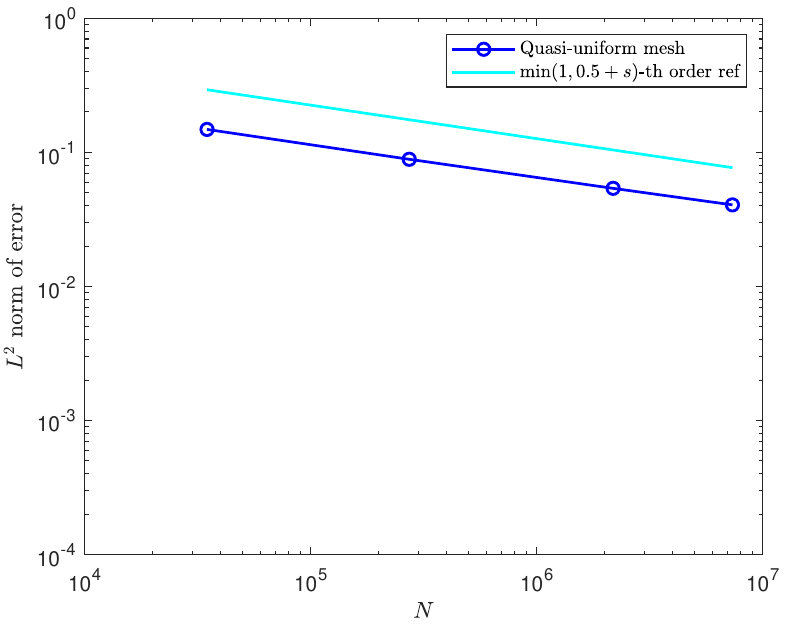}
}
\hspace{-15pt}
\subfigure[Spectral appr., $s=0.50$]{
\includegraphics[width=0.33\linewidth]{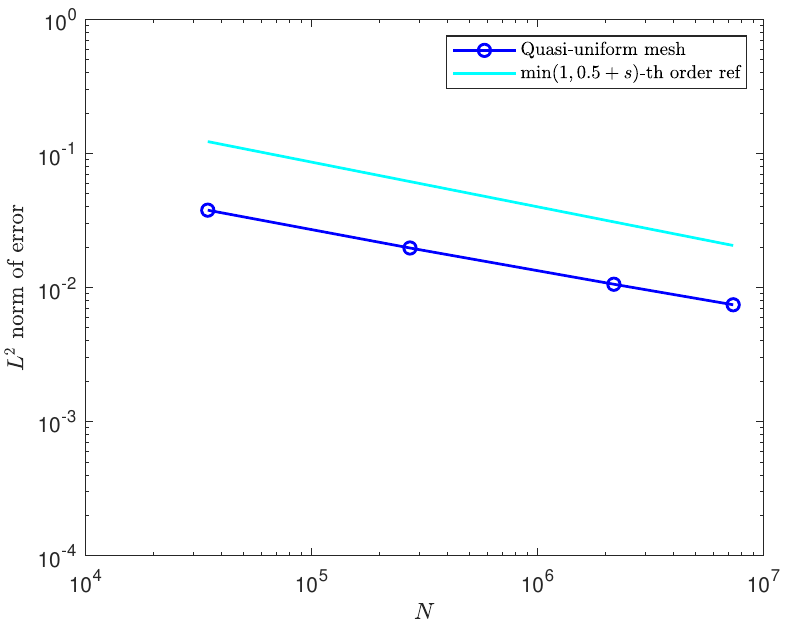}
}
\hspace{-15pt}
\subfigure[Spectral appr., $s=0.75$]{
\includegraphics[width=0.33\linewidth]{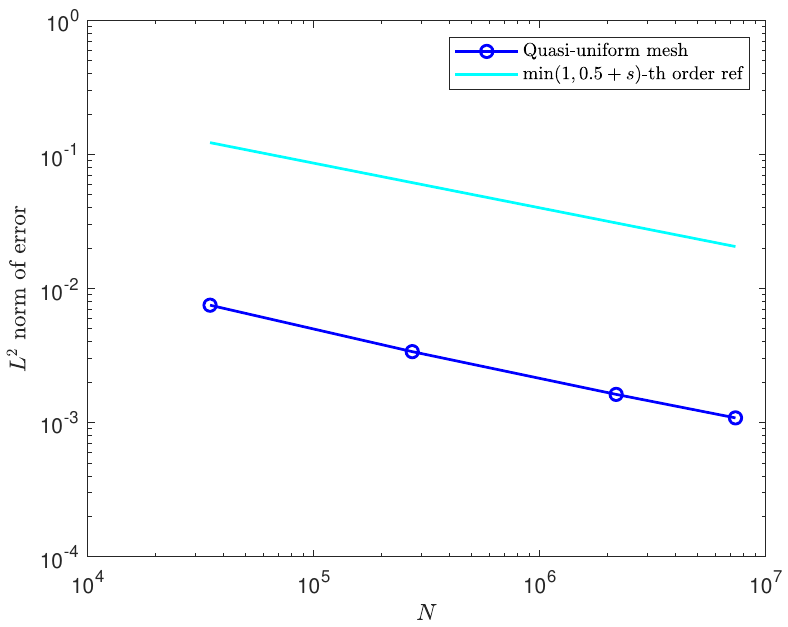}
}
\\
\subfigure[Mod. spectral appr., $s=0.25$]{
\includegraphics[width=0.33\linewidth]{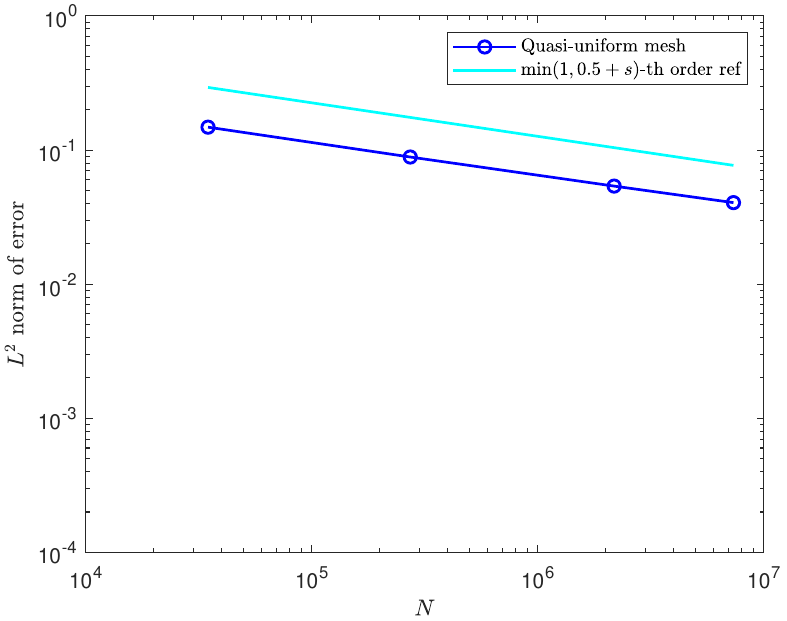}
}
\hspace{-15pt}
\subfigure[Mod. spectral appr., $s=0.50$]{
\includegraphics[width=0.33\linewidth]{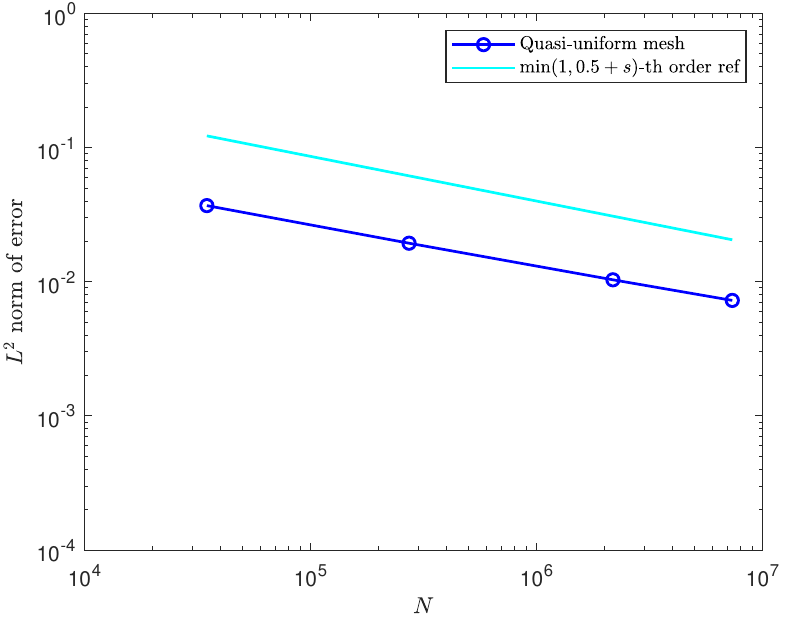}
}
\hspace{-15pt}
\subfigure[Mod. spectral appr., $s=0.75$]{
\includegraphics[width=0.33\linewidth]{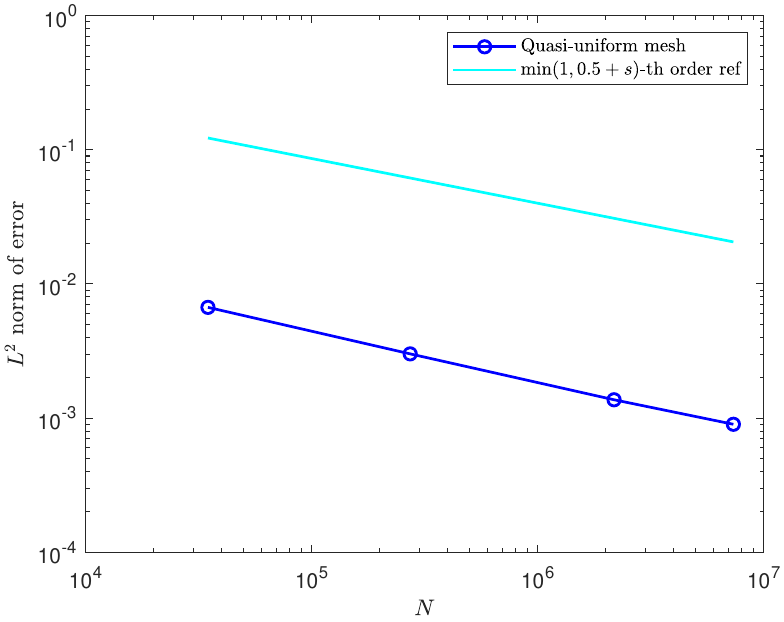}
}
\caption{Example (\ref{main-example}) in 3D. The $L^2$ norm of the error is plotted as a function $N$ for GoFD with various approximations
of the stiffness matrix.}
\label{fig:GoFD_Err-3d-1}
\end{figure}

\begin{figure}[ht!]
\centering
\subfigure[$s=0.25$]{
\includegraphics[width=0.33\linewidth]{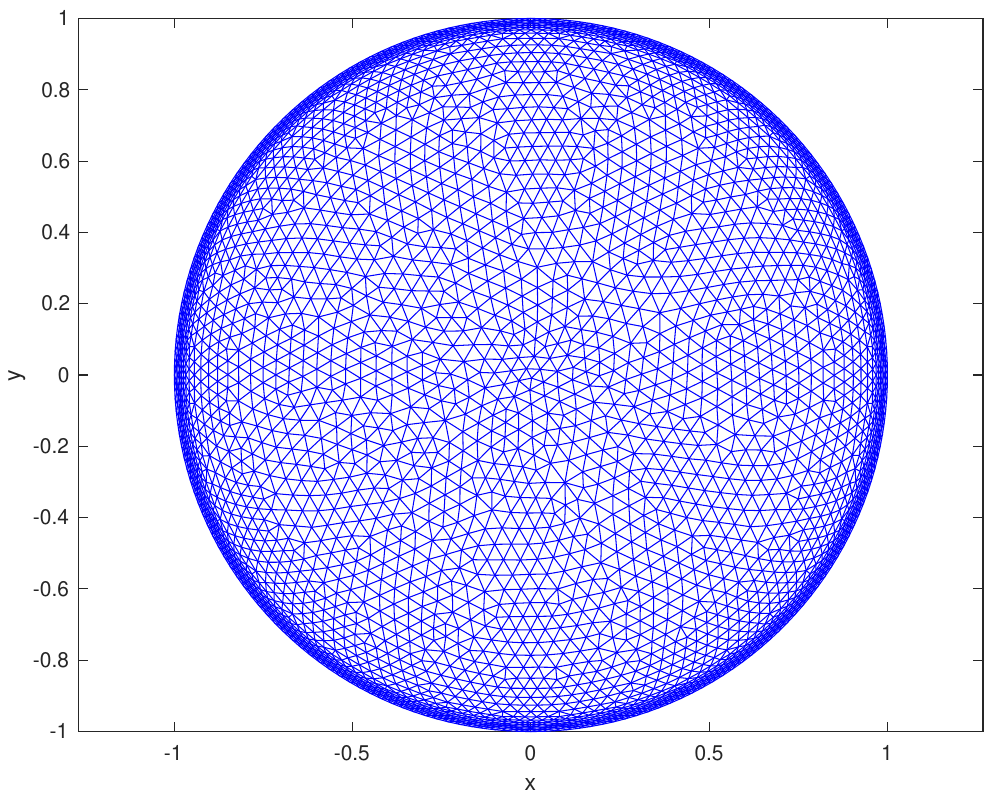}
}
\hspace{-15pt}
\subfigure[$s=0.50$]{
\includegraphics[width=0.33\linewidth]{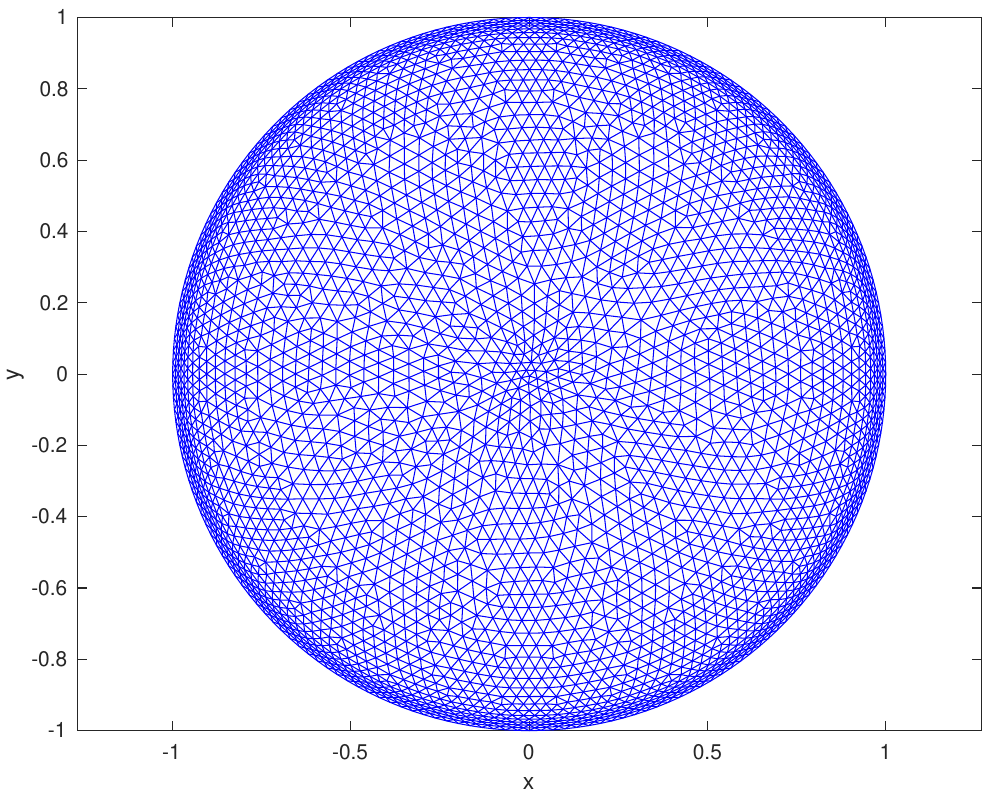}
}
\hspace{-15pt}
\subfigure[$s=0.75$]{
\includegraphics[width=0.33\linewidth]{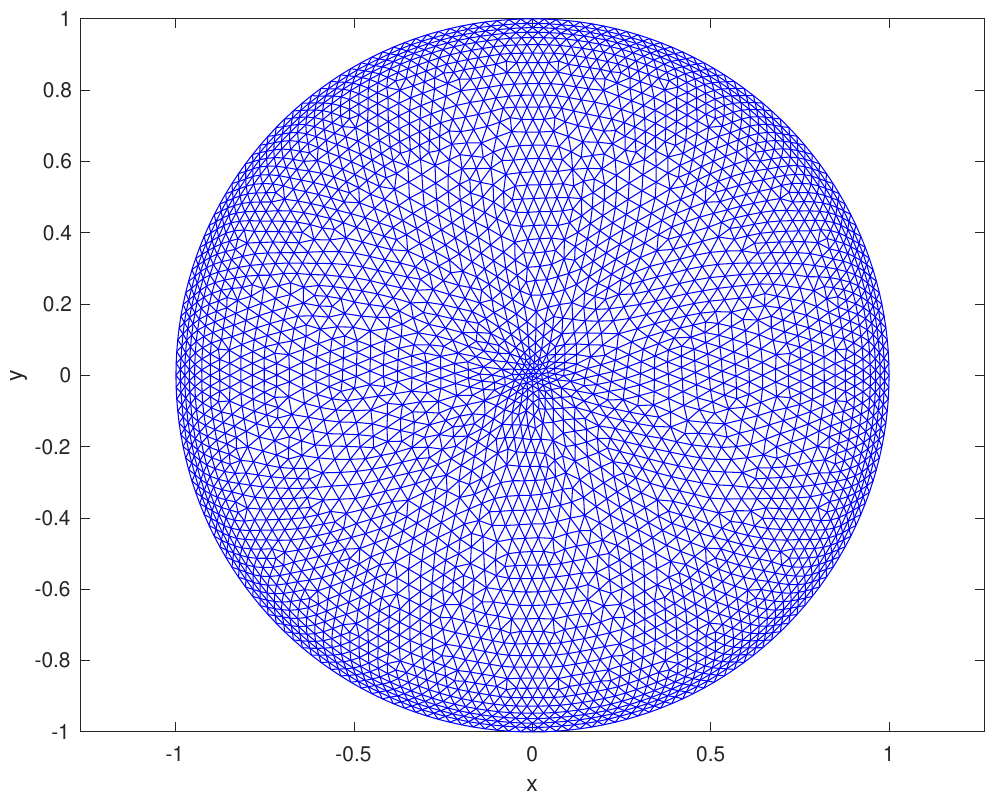}
}
\\
\subfigure[$s=0.25$]{
\includegraphics[width=0.33\linewidth]{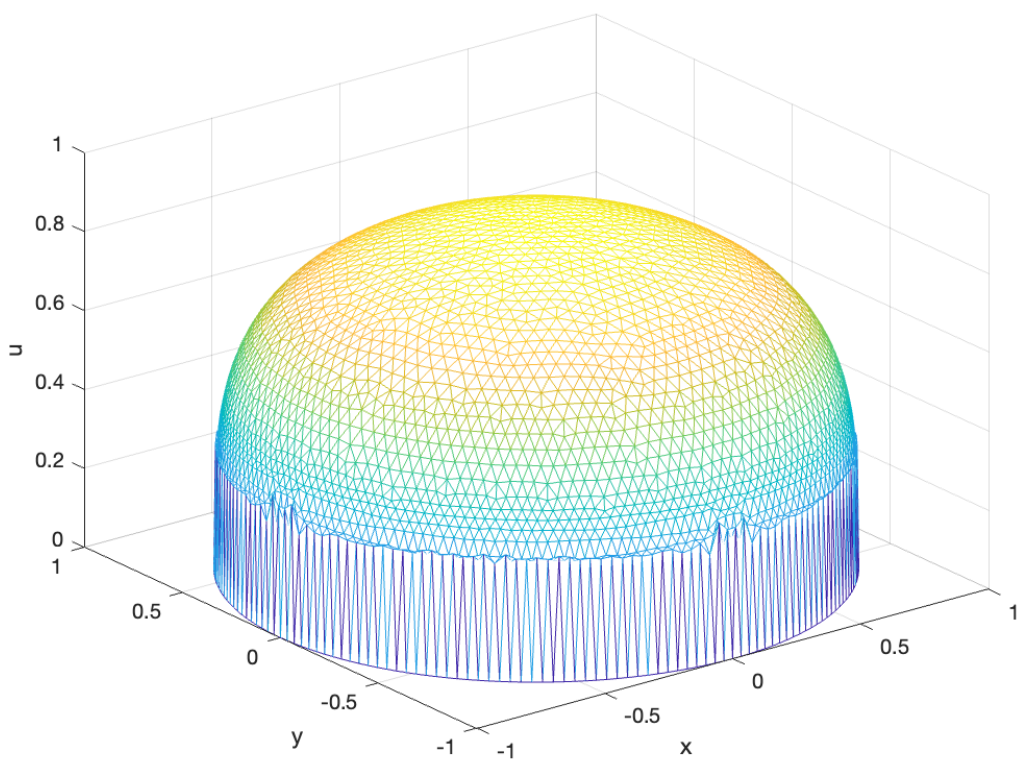}
}
\hspace{-15pt}
\subfigure[$s=0.50$]{
\includegraphics[width=0.33\linewidth]{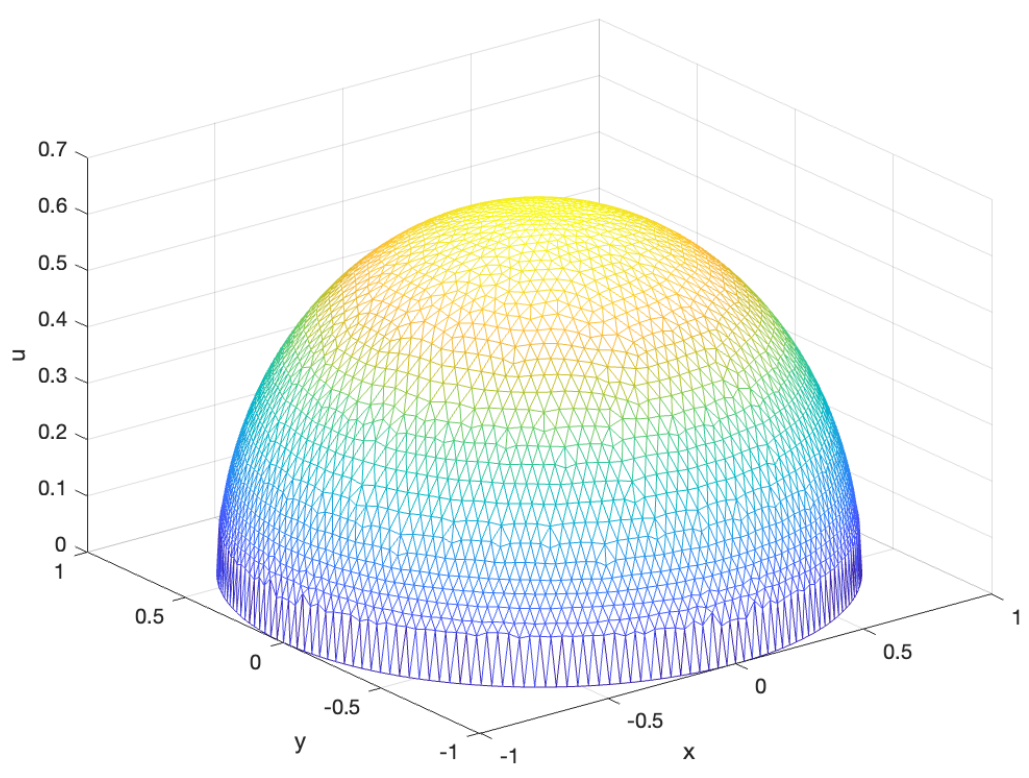}
}
\hspace{-15pt}
\subfigure[$s=0.75$]{
\includegraphics[width=0.33\linewidth]{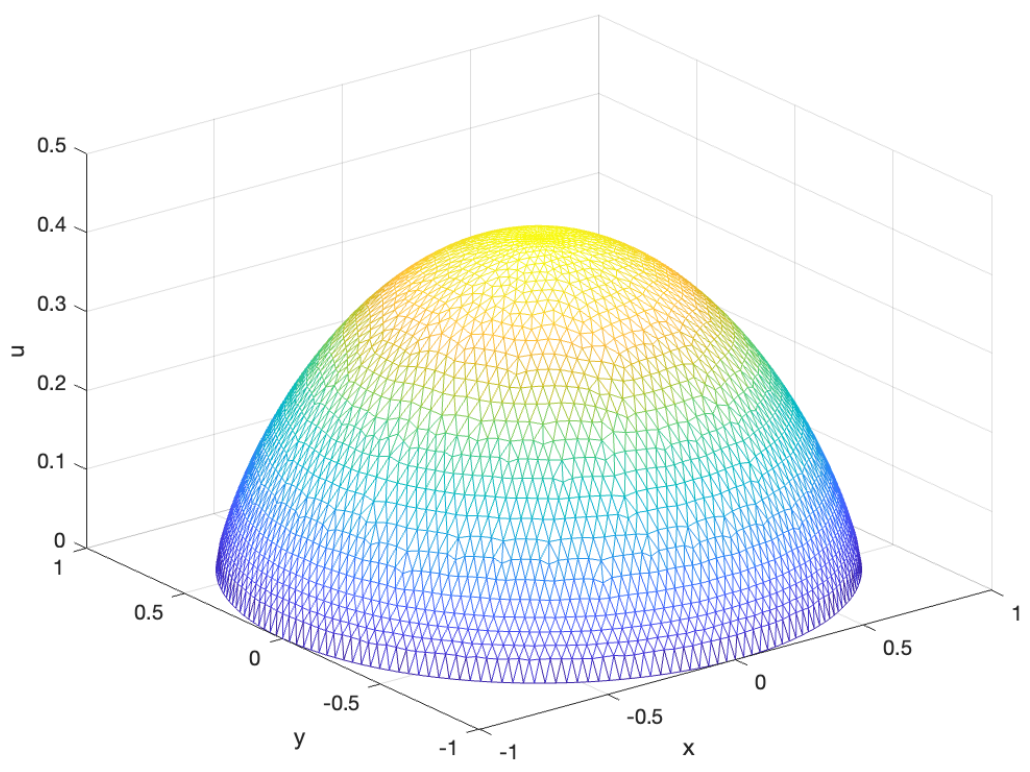}
}
\caption{Example (\ref{main-example}) in 2D. Adaptive meshes and computed solutions obtained with
GoFD with the modified spectral approximation.}
\label{fig:GoFD-mesh-solution-2d}
\end{figure}

\begin{figure}[ht!]
\centering
\subfigure[$s=0.25$]{
\includegraphics[width=0.33\linewidth]{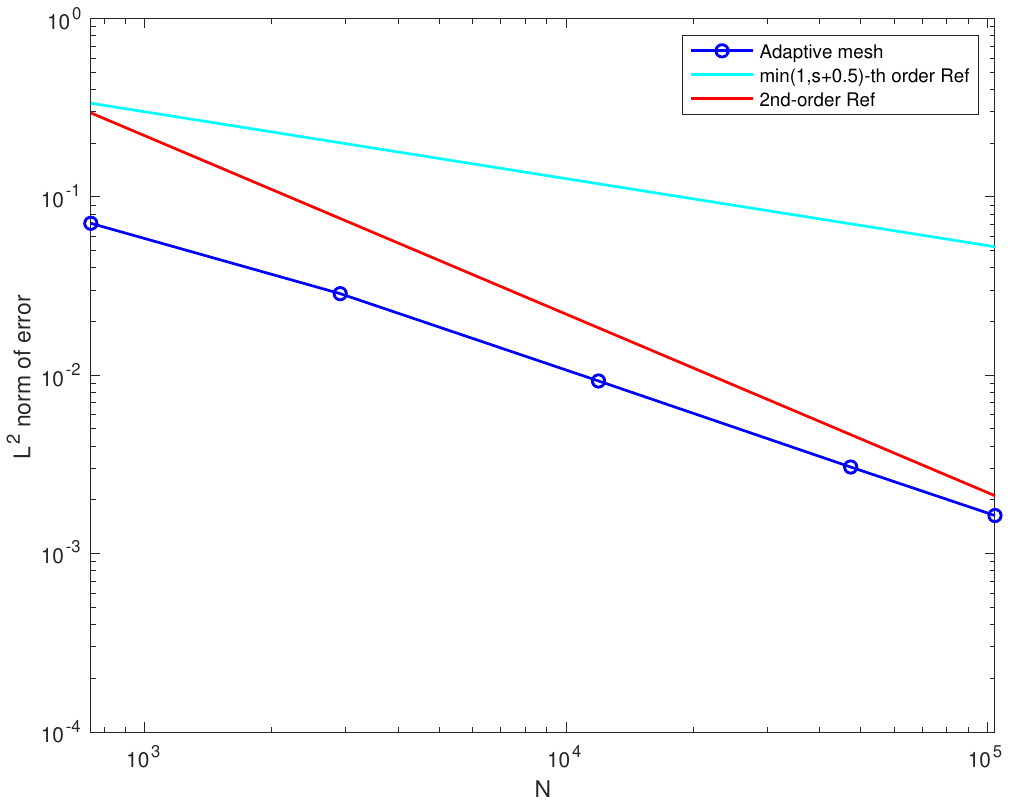}
}
\hspace{-15pt}
\subfigure[$s=0.50$]{
\includegraphics[width=0.33\linewidth]{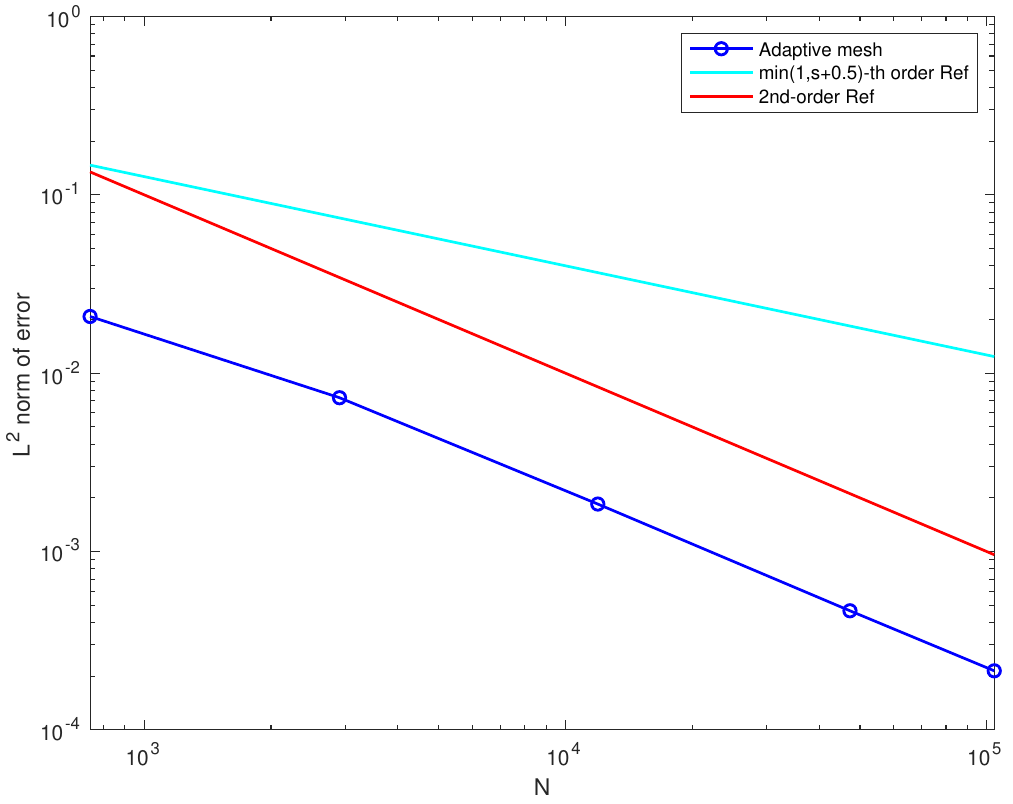}
}
\hspace{-15pt}
\subfigure[$s=0.75$]{
\includegraphics[width=0.33\linewidth]{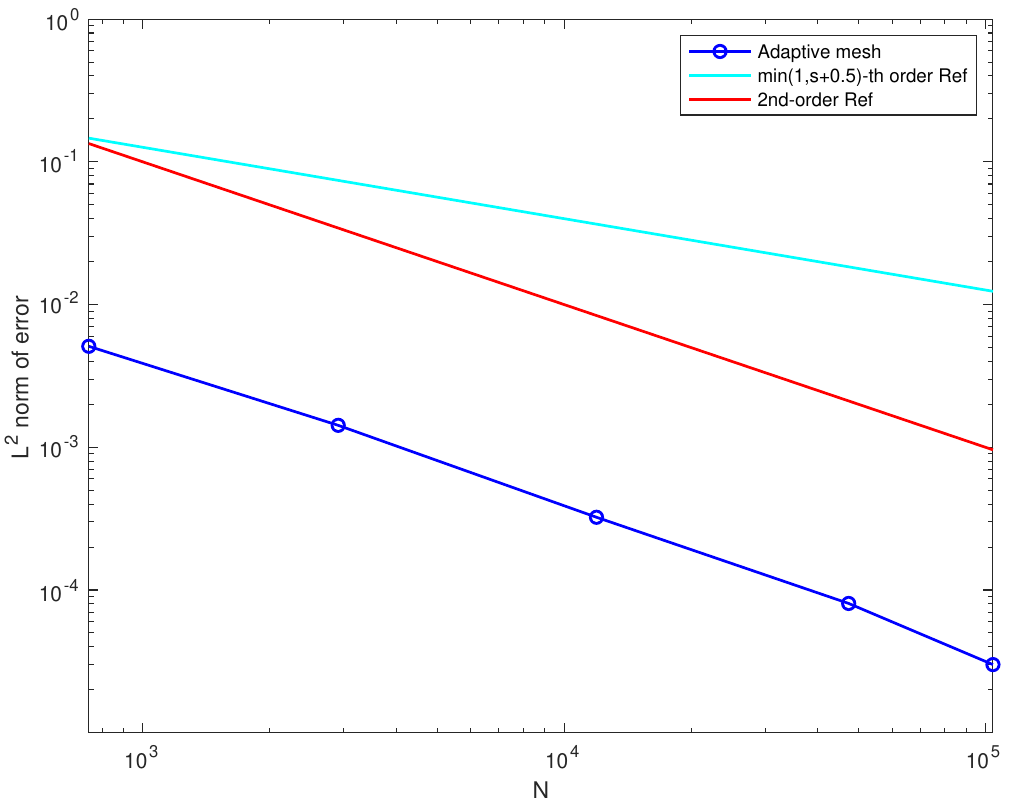}
}
\caption{Example (\ref{main-example}) in 2D. The $L^2$ norm of the error is plotted as a function $N$
for GoFD with the modified spectral approximation for the stiffness matrix and adaptive meshes.}
\label{fig:GoFD_Err-2d-2}
\end{figure}

%
%
%

\begin{figure}[ht!]
\centering
\subfigure[$s=0.25$]{
\includegraphics[width=0.33\linewidth]{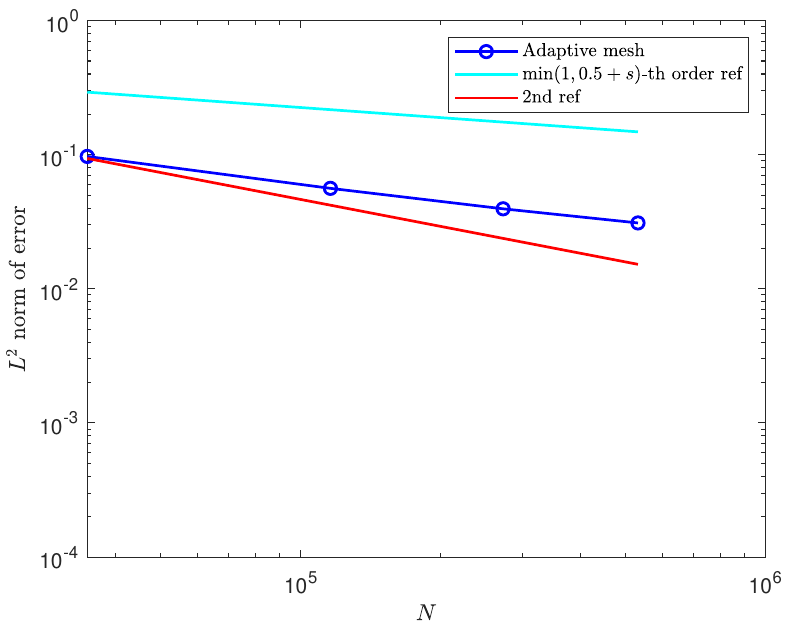}
}
\hspace{-15pt}
\subfigure[$s=0.50$]{
\includegraphics[width=0.33\linewidth]{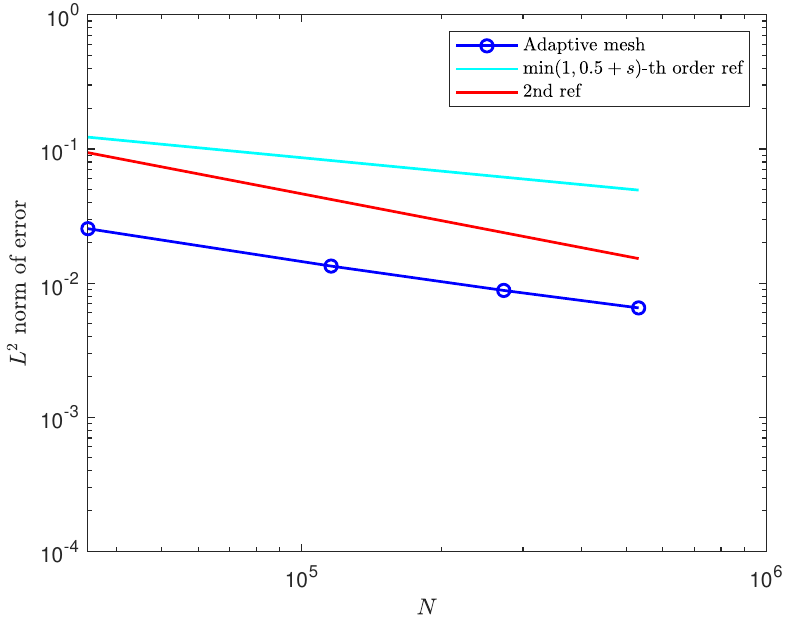}
}
\hspace{-15pt}
\subfigure[$s=0.75$]{
\includegraphics[width=0.33\linewidth]{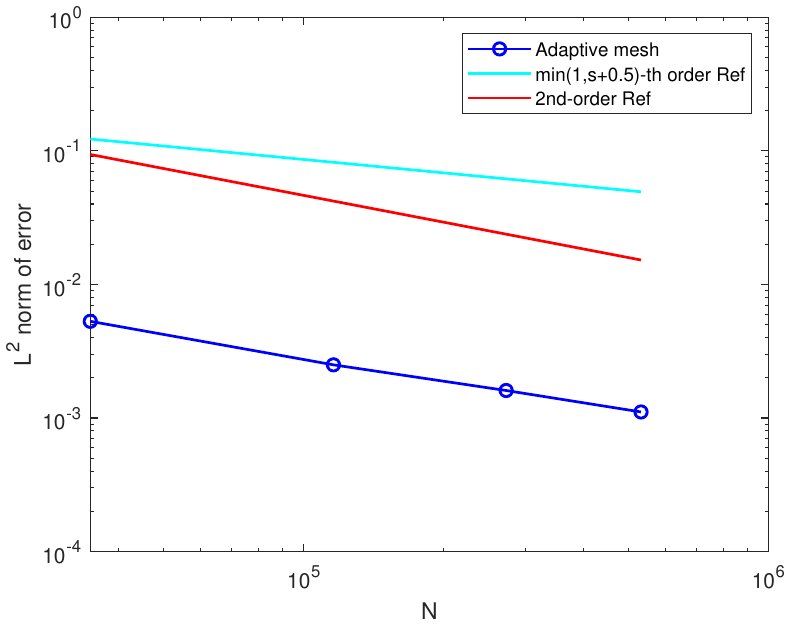}
}
\caption{Example (\ref{main-example}) in 3D. The $L^2$ norm of the error is plotted as a function $N$
for GoFD with the modified spectral approximation for the stiffness matrix and adaptive meshes.}
\label{fig:GoFD_Err-3d-2}
\end{figure}

%

\section{Conclusions}
\label{SEC:conclusions}

In the previous sections we have presented an analysis on the effect of the accuracy in approximating
the stiffness matrix $T$ on the accuracy in the FD solution of BVP (\ref{BVP-1}).
The analysis and numerics show that this effect can be significant, requiring accurate and reasonably
economic approximations to the stiffness matrix. Four approaches for approximating $T$ have been discussed
and shown to work well with GoFD for the numerical solution of BVP (\ref{BVP-1}).
Their properties are summarized in Table~\ref{table:approximate-T-3}.
This study shows that the FFT approach is a good choice for large $s$ (say $s \ge 0.5$) since it is of high
accuracy, has fast asymptotic decay rates away from the diagonal line, works well with both sparse and circulant
preconditioners, and is reasonably economic to compute.
For small $s$, the modified spectral approximation is a good choice since
it leads to high accuracy, works with well with the circulant preconditioner, and is reasonably economic to compute
although it does not have an asymptotic decay  as fast as those for the FFT approximation.

\vspace{20pt}

\section*{Acknowledgments}
J. Shen was supported in part by the National Natural Science Foundation of China through grant
[12101509] and W. Huang was supported in part by the Simons Foundation through grant MP-TSM-00002397.

\bibliographystyle{abbrv}

\end{document}